\newtheorem{thm}{Theorem}
\newtheorem{lem}{Lemma}[section]
\newtheorem{cor}[lem]{Corollary}
\newtheorem{prop}[lem]{Proposition}
\newcommand{\mc}[1]{\ensuremath{\mathcal{#1}}}
\renewcommand{\d}[1]{\ensuremath{#1_k^{(d)}}}
\newcommand{\rem}{\medskip\noindent{\bf Remark. }}
\newcommand{\Ord}[1]{{\mathcal O}\left(#1\right)}
\newcommand{\E}{{\bf E}}
\newcommand{\bth}{\begin{thm}}
\renewcommand{\eth}{\end{thm}}
\newcommand{\bl}{\begin{lem}}
\newcommand{\el}{\end{lem}}
\newcommand{\bpf}{\begin{proof}}
\newcommand{\epf}{\end{proof}}
\newcommand{\eps}{\varepsilon}
\renewcommand{\(}{\left(}
\renewcommand{\)}{\right)}
\newcommand{\diff}[1]{\frac{\partial}{\partial {#1}}}
\newcommand{\U}[1]{\left[ #1 \right]_{u=1}}
\newcommand{\btr}[1]{\left| #1 \right|}
\newcommand{\pdiff}[3]{\frac{\partial^{#3}#1}{\partial #2^{#3}}}
\begin{document}

\title{The degree profile of P\'olya trees}
\thanks{This research has been
supported by {\em FWF (Austrian Science Foundation),
National Research Network S9600, grant S9604}.}

\author{Bernhard Gittenberger \and Veronika Kraus}
\thanks{Institute of Discrete Mathematics and Geometry, TU Wien,
Wiedner Hauptstr. 8-10/104, A-1040 Wien, Austria.}

\date{\today}
\keywords{unlabelled trees, profile, nodes of fixed degree, Brownian excursion, local time}

\begin{abstract}
We investigate the profile of random P\'olya trees of size $n$ when only nodes of degree $d$ are
counted in each level. It is shown that, as in the case where all nodes contribute to the profile,
the suitably normalized profile process converges weakly to a Brownian excursion local time.
Moreover, we investigate the joint distribution of the number of nodes of degree $d_1$ and $d_2$
in the levels of the tree.
\end{abstract}

\maketitle

\section{Introduction}

Consider the size of level $k$ in a rooted tree, i.e., the number of nodes at distance $k$ 
from the root. The sequence of level sizes of the tree is commonly called the profile of the tree.
First investigations on the profile of random trees of given size (i.e., their number of nodes)
seem to go back to Stepanov \cite{Stepanov69}. A first distributional result as well as the
relation to Brownian excursion local time was achieved by Kolchin \cite{Kol77} for the family
trees of a Galton-Watson branching process conditioned on the total progeny. Different 
representations of the same result have been obtained later by several authors by approaching the
problem from very different directions (random walks, queuing theory, general theory of stochastic
processes, random trees), see \cite{GS79, Ho82, Ta91,Ta91b}.

The relation between trees and diffusion processes has been studied by probabilistic methods as
well. In this context, first and foremost the seminal papers of Aldous \cite{Al91, Al91b, Al93} in
which he developped the theory of continuum trees should be mentioned. Here, a metric space called
continuum tree is identified as the limit of several classes of random trees with respect to the
Gromov-Hausdorff topology. This theory was further elaborated by Marckert and Miermont
\cite{MaMi09} and Haas and Miermont \cite{HaMi10}. These approaches tell us a lot about the
behaviour of large random trees. They imply limit theorems for global functionals. But they do
neither provide limit theorems for local functionals like the profile of trees nor always
convergence of moments. 

A different approach to the relation between trees and diffusion process 
was pursued in \cite{Pi99}. For a
general overview on the relation between stochastic processes of combinatorial and therefore
discrete origin and their continuous counterparts can be found in \cite{Pi06}.

Not only the relation between trees and diffusion processes attracted much attention, but the
processes themselves are of interest in their own right. A thorough overview on Brownian local
times and related processes can be found in \cite{RY91}. Explicit representations for the moments
and the density of the one-dimensional projections of the local time of a Brownian excursion and
related processes have been derived by Tak{\'a}cs \cite{Ta95, Ta95b, Ta99}. 
Multi-dimensional analogues
can be found in \cite{GL99, GL00}. For results on density representations for related processes
such as occupation times we refer to \cite{DrGi97, Ja97, Ho99}.

The profile of random trees has recently attracted the attention of numerous authors. A survey on
the theory of random trees in general and the profile in particular can be found in \cite{MD09}. 
Roughly
speaking, the tree classes which have been studied can be divided into trees of height $\asymp\log n$ ($\asymp$ meaning the order of magnitude) and trees of height $\asymp\sqrt n$ where $n$ is the size of the tree. Trees of logarithmic height are for instance binary search trees and recursive trees and variations or generalizations thereof. The profile of binary search trees and variations has been analyzed by Chauvin et al. \cite{CDJ01, CKMR05} and generalized by Drmota et al. \cite{DJN08} and 
Schopp \cite{Sch10}. Hwang et al. studied the profile of tree classes related to recursive trees,
see \cite{DrHw05, Hwa07, FHN06}. The maximum of the profile, commonly called the width of a
tree, in logarithmic trees was analyzed by Devroye and Hwang \cite{DeHw06}. A related structure
appearing in the theory of data structures are so-called tries. Their profile was examined in
\cite{De02, Ni05, PHNS08}. 

Trees of height $\asymp\sqrt n$ are for instance trees obtained from conditioned Galton-Watson
branching processes or P\'olya trees, i.e., rooted trees in the pure graph theoretical sense. 
The analysis of the profile of Galton-Watson trees was done in \cite{DrGi97}. The start of the
profile, i.e., the behaviour close to the root, for trees as well as forests was investigated in
\cite{G98,G02}. Binary P\'olya trees have been studied by Broutin and Flajolet
\cite{BrFl08,BrFl11} and general P\'olya trees in \cite{DrGi}. 

The joint distribution of two level sizes was addressed explicitely by van der Hofstad et al.
\cite{HHM02} and in \cite{GL99} for Galton-Watson trees. Note that this question also arises
implicitely when proving a functional limit theorem by showing convergence of the
finite-dimensional projections as well as tightness of the profile, albeit the proofs of tightness
utilizes the moments of the difference of the two level sizes. 

Recently, patterns in random trees were investigated as well. The questions considered here are the
occurence of certain given trees as substructure of a large tree as well as the number of such
occurences. The easiest pattern is the star graph. This amounts to counting the number of nodes of
a given degree in random trees. First investigations in this directions were performed by Robinson
and Schwenk  \cite{RoSch}. In \cite{DrGi99} it was shown for several tree classes (certain classes
of Galton-Watson trees as well as P\'olya trees) that the number of nodes of given degree is
asymptotically normally distributed, as long as the degree fixed. A phase transition occurs if the
given degree grows with the tree size, see \cite{MM91} for Galton-Watson trees and \cite{Gi06}
for P\'olya trees. The analogues for general patterns instead of star graphs was carried out by
Chyzak et al. \cite{CDKK08}. 

Note that all results on patterns in trees tell us something about the number of occurences of a
given pattern, but nothing about their location within the large tree. For Galton-Watson trees
this question was settled by Drmota \cite{Dr97} for star graphs. The same question for P\'olya
trees is addressed in this paper.

\subsection*{Plan of the paper} In the next section we will recall some basic results on P\'olya
trees and present the main results afterwards. The first result is Theorem~\ref{thm:1} which
states that the d-profile (number of nodes of degree $d$ at fixed distance from the root) view as
a stochastic process weakly converges to Brownian excursion local time. To prove this theorem we
will split it into two partial results, the convergence of the finite-dimensional projections of
the profile (Theorem~\ref{th:3}) as well as the tightness of the profile
(Theorem~\ref{thm:tightness}). To examine the joint behaviour of two different patterns we derive
the covariance and the correlation (Proposition~\ref{prop:covariance} and Theorem~\ref{thm:cor}).
The results on covariance and correlation exhibit a surprisingly regular structure of the limiting
object. We show that the correlation coefficient tends to 1 and derive the speed of convergence as
well.

The proof of these theorems will be carried out by means of generating functions. This will be
described in Section~\ref{sec:2} as well. This section ends with an introduction of the notation we
will use throughout the paper. 

To proceed we need a singularity analysis of the generating functions together with a kind of
transfer of the singular behaviour into the asymptotic behaviour of the coefficients in the sense
of \cite{FO90}. Section~\ref{sec:3} provides some \emph{a priori} estimates which are to be
refined later. In Section~\ref{sec:4} we first present the proof of the one-dimensional analogue
of Theorem~\ref{th:3} which is based on the refinement of the results in Section~\ref{sec:3}. The
rest of the section is devoted to the refined analysis. The generalization to multiple dimensions
is done in Section~\ref{sec:5}. In Section~\ref{sec:6} we show tightness. This is usually a very
technical matter (cf. the eight-page proof in \cite{DrGi}). Here we offer a considerably shorter
proof by showing a more general result using Fa\`a di Bruno's formula. The final section is
devoted to the joint behaviour of the numbers of nodes of two different degrees within one level.

\section{Results and Notation}\label{sec:2}

\subsection{Preliminaries}

A P\'olya tree is an unlabelled rooted tree and thus it can be viewed as a root with a set of
P\'olya trees attached to it. By the machinery of symbolic transfers described in \cite{FlSe},
we easily obtain that the generating function $y(x)$ of unlabelled rooted trees fulfils the 
functional equation
\begin{equation} \label{treefun}
y(x) = x\exp\(\sum\limits_{i \geq 1} \frac{y(x^i)}{i}\), 
\end{equation} 
a result going back to P\'olya \cite{Pol37} (cf also~\cite{PoRe87}) who also showed that $y(x)$ has exactly one
singularity $\rho$ on the circle of convergence and that $\rho \approx 0.3383219$. 
Around its singularity, $y(x)$ has the local expansion 
\begin{align}\label{exp}
y(x) = 1 - b\sqrt{\rho - x} + c(\rho-x) +d\sqrt{\rho-x}^3 + \cdots,
\end{align}
with $b\approx 2.6811266$, as Otter \cite{Ot} showed, and $y(\rho)=1$. From the expansion, asymptotic estimations for $y_n$
can be derived by transfer lemmas (cf. \cite{FlSe}):
\begin{align}\label{eq:yn}
y_n \sim \frac{b \sqrt{\rho}}{2 \sqrt{\pi}} \frac{1}{n^{\frac{3}{2}}\rho^n}
\end{align}

\subsection{Main results}

We define by $L_n^{(d)}(k)$ the number of nodes of degree $d$ at distance $k$ from the root in a
randomly chosen unlabelled rooted tree of size $n$. By linear interpolation, we create a continuous stochastic process 
$L_n^{(d)}(t) = (\lfloor t \rfloor + 1 -t)L_n^{(d)}(\lfloor t\rfloor) + (t-\lfloor t \rfloor)L_n^{(d)}(\lfloor t \rfloor + 1), \quad t \geq 0
$

\begin{thm}\label{thm:1}
Let 
\begin{align*}
l_n^{(d)}(t) = \frac{1}{\sqrt{n}} L_n^{(d)}(t \sqrt{n})
\end{align*}
and $l(t)$ denote the local time of a standard Brownian excursion. Then $l_n^{(d)}(t)$ converges 
weakly to the local time of a Brownian excursion, i.e., we have 
\begin{equation} \label{mainres}
(l_n^{(d)}(t))_{t \geq 0} \stackrel{w}{\to} \frac{C_d \rho^d}{\sqrt{2 \rho}b} \cdot l\left( \frac{b \sqrt{\rho}}{2 \sqrt{2}}t \right)_{t\geq0},
\end{equation} 
where $C_d=C+\Ord{d\rho^d}$ with $C=\exp\left(\sum\limits_{i \geq 1} \frac{1}{i}\left(\frac{y(\rho^i)}{\rho^i}-1\right)\right)\approx 7.7581604\dots$. 
\end{thm}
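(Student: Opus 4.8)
The plan is to prove Theorem~\ref{thm:1} by decomposing it into two ingredients, exactly as the introduction announces: (i) convergence of the finite-dimensional distributions of $l_n^{(d)}$ to those of the scaled Brownian excursion local time (this is Theorem~\ref{th:3}), and (ii) tightness of the family $\{l_n^{(d)}\}$ in $C[0,\infty)$ (Theorem~\ref{thm:tightness}). By the standard functional limit theorem machinery (e.g.\ Billingsley), (i) together with (ii) yields the claimed weak convergence in~\eqref{mainres}. So the body of the work is to set up the generating function framework that feeds both pieces, and to identify the precise constants appearing on the right-hand side.

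**First I would** set up the bivariate generating function. Let $y(x,u)$ be the generating function of P\'olya trees where $x$ marks size and $u$ marks... — more precisely, to get at $L_n^{(d)}(k)$ one introduces a generating function $y_k(x,u)$ tracking trees with a marked node (or rather, tracking the number of degree-$d$ nodes at level exactly $k$, with $u$ the corresponding variable, or better several variables $u_1,\dots,u_m$ at levels $k_1<\dots<k_m$ for the finite-dimensional version). The functional equation~\eqref{treefun} becomes a recursion in $k$: passing from level $k$ to level $k-1$ corresponds to one application of the P\'olya operator $z\mapsto x\exp(\sum_{i\ge1} z(x^i)/i)$, with the marking of degree-$d$ nodes handled by replacing, at the marked level, the full exponential by the appropriate derivative/substitution that isolates the term of degree $d$ in the multiset of subtrees. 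The key analytic input — which I would borrow from the \emph{a priori} estimates of Section~\ref{sec:3} and their refinement in Sections~\ref{sec:4}--\ref{sec:5} — is the singular behaviour of $y(x)$ from~\eqref{exp} together with the fact that the higher iterates $y(x^i)$, $i\ge2$, are analytic at $\rho$ and contribute only the constant $C_d$ (hence $C$) and the $\Ord{d\rho^d}$ corrections. One then extracts coefficients by singularity analysis \cite{FO90}: the normalization $k=t\sqrt n$, $L\sim \sqrt n\, l$ is precisely the scaling under which the discrete recursion converges to the differential/integral equation characterizing Brownian excursion local time, and the constants $\rho$, $b$, $C$ are read off from the leading terms. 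This gives Theorem~\ref{th:3}.

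**For tightness** I would follow the route flagged in the introduction: rather than the long case analysis of \cite{DrGi}, establish a uniform moment bound of the form $\E\big[(L_n^{(d)}(k)-L_n^{(d)}(k'))^{2m}\big]\le C_m\, |k-k'|^m\, n^{(m)/\text{something}}$ (after normalization, $\E|l_n^{(d)}(t)-l_n^{(d)}(s)|^{2m}\le C_m|t-s|^m$ with $m\ge2$), which by Kolmogorov's criterion gives tightness in $C[0,\infty)$. The efficient way to get such a bound uniformly in $d$ is to differentiate the generating function relations repeatedly and control the resulting sums via Fa\`a di Bruno's formula — this is the "more general result" the introduction advertises — keeping track that all the combinatorial coefficients arising stay bounded after the $\sqrt n$ scaling.

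**The main obstacle** I expect is not the soft part (assembling fdd + tightness) but the refined singularity analysis that produces the \emph{exact} constants, and in particular the claim $C_d = C + \Ord{d\rho^d}$: one must show that isolating degree-$d$ nodes (which at the level of the generating function means extracting a specific coefficient inside the $\exp$) perturbs the multiplicative constant only by an exponentially small amount in $d$, uniformly, while not disturbing the square-root singularity type. Getting uniform-in-$d$ error terms through the transfer lemmas — so that the weak limit is genuinely the \emph{same} Brownian excursion local time for every fixed $d$, only rescaled by the explicit factor $C_d\rho^d/(\sqrt{2\rho}\,b)$ — is the delicate bookkeeping at the heart of the proof, and it is exactly what Sections~\ref{sec:3}--\ref{sec:5} are there to supply.
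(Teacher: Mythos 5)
Your proposal is correct and follows essentially the same route as the paper: weak convergence is obtained by combining convergence of the finite-dimensional distributions (Theorem~\ref{th:3}, proved via the level recursion for $y_k^{(d)}(x,u)$ and singularity analysis yielding the constant $C_d\rho^d$) with tightness (Theorem~\ref{thm:tightness}, proved via a fourth-moment bound of the form $\E(L_n(r)-L_n(r+h))^4\le c\,h^2 n$ obtained through repeated differentiation and Fa\`a di Bruno's formula). You have also correctly located the main technical burden in the refined, uniform-in-$d$ singularity analysis of Sections~\ref{sec:3}--\ref{sec:5}.
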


\rem In \cite{DrGi} it is shown that the general profile of an unlabelled rooted random tree
converges to Brownian excursion local time, i.e.
\begin{align*}
(l_n(t))_{t\ge 0} \to \(\frac{b \sqrt{\rho}}{2 \sqrt{2}} l \big( \frac{b \sqrt{\rho}}{2 \sqrt{2}}
t\big)\)_{t\ge 0}
\end{align*}
The normalising constant in Theorem \ref{thm:1} equals $\mu_d \frac{b \sqrt{\rho}}{2 \sqrt{2}}$,
where $\mu_d n$ is asymptotically equal to the expected value of nodes of degree $d$ in trees of
size $n$, with $\mu_d = \frac{2 C_d}{b^2\rho} \rho^d$, see for example \cite{Da} or \cite{DrGi99}. 
To prove the above statement, weak convergence of the finite dimensional distributions and tightness have to be shown:

\begin{thm}\label{th:3}
For any choice of fixed numbers $t_1, \ldots, t_m$ and for large $d$ 
\begin{align*}
(l_n^{(d)}(t_1), \ldots, l_n^{(d)}(t_m)) \stackrel{w}{\to}  \frac{C_d \rho^d}{\sqrt{2 \rho} b}
l\left(\frac{b \sqrt{\rho}}{2 \sqrt{2}} t_1, \ldots, \frac{b \sqrt{\rho}}{2 \sqrt{2}}t_m \right)
\end{align*}
as $n \to \infty$.
\end{thm}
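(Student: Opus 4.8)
The plan is to establish convergence of the finite-dimensional distributions by the method of moments, working entirely with generating functions. First I would set up a bivariate (indeed multivariate) generating function $y(x,u_1,\dots,u_m)$ (or a sequence of such functions, one per level) in which $x$ marks the size and each $u_j$ marks the number of degree-$d$ nodes at distance $t_j\sqrt n$ from the root; because the levels at which we count are distinct, the bookkeeping reduces to iterating the basic P\'olya substitution \eqref{treefun} with the exponential replaced by its pointed/marked analogue. The goal is to extract, via singularity analysis in the spirit of \cite{FO90} and the expansion \eqref{exp}, the asymptotics of the mixed moments $\E\bigl[L_n^{(d)}(k_1)^{r_1}\cdots L_n^{(d)}(k_m)^{r_m}\bigr]$ with $k_j=\lfloor t_j\sqrt n\rfloor$, and to show they match the corresponding moments of the claimed limit $\frac{C_d\rho^d}{\sqrt{2\rho}b}\,l\bigl(\frac{b\sqrt\rho}{2\sqrt2}t_1,\dots\bigr)$, whose joint moments are known explicitly (Takács \cite{Ta95b}, and the multidimensional formulas in \cite{GL99,GL00}). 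Since Brownian excursion local time is determined by its moments, this yields weak convergence.

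The argument proceeds in the following steps. (1) Reduce to the one-dimensional case: Section~\ref{sec:4} (as announced in the plan of the paper) will prove the $m=1$ version, i.e.\ the asymptotics of $\E\bigl[L_n^{(d)}(k)^r\bigr]$, using the \emph{a priori} estimates of Section~\ref{sec:3} together with a refined singular expansion of the relevant generating functions near $x=\rho$. (2) Generalize the combinatorial setup to $m$ marked levels: decompose a tree according to the subtree structure hanging below the first marked level, then below the second, and so on; each such decomposition contributes a composition of generating functions, and the degree-$d$ marking contributes a factor that, after the substitution $x^i$ in \eqref{treefun}, is dominated by the $i=1$ term — this is precisely where the constant $C_d=C+\Ord{d\rho^d}$ and the factor $\rho^d$ enter, the "large $d$" hypothesis ensuring the error terms are genuinely negligible. (3) Perform the singularity analysis: show that the dominant singular behaviour of the $m$-dimensional moment generating function is governed by the same square-root singularity with exponent $b$ as in \eqref{exp}, with the scaling $k_j\sim t_j\sqrt n$ producing, after transfer via \eqref{eq:yn}-type estimates, exactly the Gaussian-type kernels appearing in the moment formulas for excursion local time. (4) Identify the constants: match the normalization $\frac{C_d\rho^d}{\sqrt{2\rho}b}$ and the time change $\frac{b\sqrt\rho}{2\sqrt2}$ against the known moments, using the relation $\mu_d=\frac{2C_d}{b^2\rho}\rho^d$ recorded in the Remark. (5) Conclude by the moment convergence theorem, which applies because the limit is a.s.\ bounded with all moments finite and moment-determinate.

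The main obstacle will be Step (3), the uniform singularity analysis of the $m$-fold iterated functional equation. In the one-dimensional profile analysis of \cite{DrGi} this already requires delicate control of how the singular expansion of $y$ propagates through the $k$-fold composition as $k\to\infty$ with $k\asymp\sqrt n$; here one must do this simultaneously for several composition depths $k_1<\dots<k_m$ and track the cross terms that create the off-diagonal structure of the limiting covariance. Concretely, the difficulty is showing that the "error" contributions (the $\Ord{d\rho^d}$ pieces in $C_d$, the higher-order terms in \eqref{exp}, and the corrections from the $i\ge2$ terms of the P\'olya exponent) remain uniformly small across all levels $k_j$ up to order $\sqrt n$ and do not accumulate over the $r_1+\dots+r_m$ marked subtrees. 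I expect the "for large $d$" hypothesis to be exactly what tames this: it decouples the degree-$d$ marking from the recursive structure enough that the leading asymptotics factor cleanly as a constant times the local-time moment, reducing the multivariate case — once the combinatorial decomposition in Step (2) is in place — to a careful but essentially routine iteration of the one-dimensional estimates.
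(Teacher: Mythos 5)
Your route is genuinely different from the paper's: you propose the method of moments, whereas the paper proves convergence of the finite-dimensional distributions by convergence of characteristic functions. Concretely, the paper sets $u_j=e^{it_j/\sqrt n}$, derives a closed-form asymptotic for $w_k^{(d)}(x,u)$ (Theorem~\ref{th:exp}, and its two-level analogue Theorem~\ref{th:expmult}) and then extracts coefficients over a Hankel-type contour, matching the result against the known characteristic function \eqref{local_time_char_fun} of excursion local time from \cite{CoHo81}. The technical heart is Lemma~\ref{lem:wkd}: the nonlinear level recursion $w_{k+1}^{(d)}=y(\exp(w_k^{(d)}+\Sigma_k^{(d)})-1)$ is linearized by passing to $y^k/w_k^{(d)}$ and telescoping, which is exactly what produces the $\sinh$ in the denominator of \eqref{eq:wkd}. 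A moment-based proof is viable in principle (this is essentially Tak\'acs's route for simply generated trees, and the limit moments are available in \cite{Ta95b,GL99}), and it buys you independence from knowing the limit's characteristic function in closed form; but it costs you control of \emph{all} mixed derivatives $\partial^{r_1}_{u_1}\cdots\partial^{r_m}_{u_m}$ at $u=1$, i.e.\ an infinite hierarchy of linear recurrences (in the spirit of the Fa\`a di Bruno computation the paper uses only for tightness), rather than the single second-derivative bound the characteristic-function route needs.

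Two points in your plan would need repair. First, your appeal to moment determinacy via the limit being ``a.s.\ bounded'' is false: the local time $l(t)$ of the Brownian excursion at a fixed level is an unbounded random variable; you must instead verify Carleman's condition from the growth of its moments (which do satisfy it), and for $m>1$ you need the multivariate version. Second, you locate the main difficulty in the error terms and claim the ``large $d$'' hypothesis tames them; this misidentifies the crux. The degree marking only perturbs the initial condition $w_0^{(d)}=(u-1)xZ_{d-1}(\cdot)$ and contributes the constant $C_d\rho^d$ (via $C^{(d)}(\rho)=C_d\rho^d$ in Lemma~\ref{l:g}); it plays no role in controlling the $k$-fold composition. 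The genuine obstacle is solving the level recursion uniformly for $k\asymp\sqrt n$ with $u$ at distance $\asymp n^{-1/2}$ from $1$, and your proposal gives no mechanism for that --- neither the paper's Riccati-type linearization nor an explicit solution of the moment recurrences. Without such a mechanism, step (3) of your plan does not go through.
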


\rem
We will show this theorem by proving the convergence of the corresponding characteristic
functions. It is well known (cf. \cite{CoHo81}) 
that the characteristic function of $\frac{C_d \rho^d}{\sqrt{2 \rho} b}
l\(\frac{b \sqrt{\rho}}{2 \sqrt{2}} t\)$ is 
\begin{equation} \label{local_time_char_fun}
\psi(t)=1+ \frac{C_d \rho^d}{ib\sqrt{\rho\pi} }\int_\gamma \frac{t\sqrt{-x} \exp\(-\frac{\kappa
b}{2\sqrt{-\rho x}}-x\)}{\sqrt{-x} \exp\(\frac{\kappa b}{2\sqrt{-\rho x}}\) -
\frac{C_d\rho^d t}{b\sqrt{\rho}} \sinh\(\frac{\kappa b}{2\sqrt{-\rho x}}\)} \, dx
\end{equation} 
where $\gamma$ is a contour going from $+\infty$ back to $+\infty$ while encircling the origin
clockwise.

\medskip
A sequence of stochastic processes might not converge even if the sequence of their images with
respect to every finite-dimensional projection does. Roughly speaking, in order to guarantee
convergence in the sense of stochastic processes (i.e., when constructing a sequence by 
applying an arbitrary continuous bounded functional to the corresponding probability measures, 
this sequence must converge) the sample paths of the processes must not fluctuate too wildly.  
Tightness is a technical property of stochastic processes which guarantees this. The next
theorem states a technical condition for the profile process which implies tightness (cf.
\cite{Bi} and \cite{KS} for the general theory).

\begin{thm} \label{thm:tightness}
There exists a constant $c>0$ such that all integers $r,h,n$ the inequality 
\begin{equation} \label{tightness}
\E\left( L_n(r)-L_n(r+h)\right)^4 \le c\, h^2 n
\end{equation} 
holds.
\end{thm}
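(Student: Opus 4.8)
The plan is to prove a more general moment bound via generating functions and Fa\`a di Bruno's formula, following the outline promised in the introduction. First I would set up the bivariate generating function that marks, for a tree of size $n$, the difference $L_n(r)-L_n(r+h)$ between two level sizes; more precisely, one works with the generating function $y_{r,h}(x,u)$ where $u$ marks the net contribution of levels $r$ through $r+h$ (or, more conveniently, two separate variables $u_1,u_2$ marking the sizes of level $r$ and level $r+h$ respectively). The recursive structure of P\'olya trees gives a functional equation: a tree is a root with a multiset of subtrees attached, so $y_{r,h}$ at depth-shift satisfies an equation of the Euler-transform type built from $y_{r-1,h}$, exactly as in~\eqref{treefun}. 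The fourth moment $\E(L_n(r)-L_n(r+h))^4$ is then extracted as a coefficient of the fourth derivative in the marking variables evaluated at $u=1$, divided by $y_n$.

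The heart of the argument is to control the fourth derivative $\partial_u^4 y_{r,h}(x,u)\big|_{u=1}$ as an analytic function of $x$ near the singularity $\rho$. Here Fa\`a di Bruno's formula is the key tool: differentiating the composed/exponentiated functional equation four times produces a sum over partitions of $\{1,2,3,4\}$ of products of lower-order derivatives of $y$ and of the inner generating functions. Using the \emph{a priori} estimates from Section~\ref{sec:3} for the singular behaviour of $y(x)$ and of the level-indexed generating functions (the expansion~\eqref{exp} and its analogues), one shows by induction on $h$, and tracking the dependence on $r$, that the relevant derivative has a singularity of the type $(\rho-x)^{-\alpha}$ with the right exponent and a coefficient that is $\Ord{h^2}$ uniformly in $r$ and $n$. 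The telescoping/cancellation that produces the factor $h^2$ rather than something larger comes from the difference structure: the first-order terms in the two marking variables nearly cancel, so effectively one differentiates a quantity of size $\Ord{h}$ twice, or $\Ord{h^2}$ once — this is where the bound $h^2$ (as opposed to $h^4$) originates, and making this precise uniformly in all three parameters is the main obstacle.

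Once the singular expansion of $\partial_u^4 y_{r,h}\big|_{u=1}$ is established with a coefficient bounded by $c' h^2$, the transfer lemmas of Flajolet--Odlyzko~\cite{FO90,FlSe} convert this into a coefficient asymptotic: $[x^n]\,\partial_u^4 y_{r,h}\big|_{u=1} = \Ord{h^2 n^{\beta}\rho^{-n}}$ for the appropriate $\beta$. Dividing by $y_n \asymp n^{-3/2}\rho^{-n}$ from~\eqref{eq:yn} and accounting for the combinatorial factors relating the fourth derivative at $u=1$ to the fourth moment (falling factorials versus powers, i.e. Stirling numbers, which only contribute lower-order corrections once one also controls the second and third derivatives the same way), one obtains $\E(L_n(r)-L_n(r+h))^4 \le c\, h^2 n$, which is exactly~\eqref{tightness}.

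I expect the principal difficulty to be twofold: first, organizing the Fa\`a di Bruno expansion so that the bookkeeping over set partitions remains manageable and the error terms are genuinely uniform in $r$, $h$ and $n$ simultaneously; and second, isolating the mechanism by which the ``difference'' $L_n(r)-L_n(r+h)$ gains the quadratic-in-$h$ saving. A clean way to handle the latter is to write $L_n(r)-L_n(r+h)$ as a telescoping sum $\sum_{j=r}^{r+h-1}(L_n(j)-L_n(j+1))$ and bound the fourth moment of the sum by expanding and using the single-step estimates together with a mixing-type decay between widely separated single-step differences; but carrying this through in the generating-function framework, rather than probabilistically, will require the refined singularity analysis of Sections~\ref{sec:3}--\ref{sec:5} as input. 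Everything after the uniform singular bound is routine transfer and is deferred to Section~\ref{sec:6}.
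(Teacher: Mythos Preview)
Your plan is essentially the paper's approach: generating functions for the two-level profile, Fa\`a di Bruno to control higher derivatives of the exponential recursion, then Flajolet--Odlyzko transfer. Two points where the paper's execution differs from what you anticipate are worth noting.

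First, the paper does not keep two marking variables but substitutes $u_2=u^{-1}$ and works with $\tilde y_{r,h}(x,u,u^{-1})$; the fourth moment of the \emph{difference} is then extracted from a single-variable operator $\bigl(u\partial_u\bigr)^4$ at $u=1$, reducing \eqref{tightness} to the uniform bound $\Ord{h^2/(1-|y(x)|)}$ on a linear combination of $\gamma_{k,h}^{(d)[j]}(x)$, $j\le 4$. Second, the $h^2$ does not arise by induction on $h$ or by telescoping over single steps with a mixing argument. It comes from the identity $\gamma_{k,h}^{(d)[1]}(x)=\gamma_k^{(d)[1]}(x)-\gamma_{k+h}^{(d)[1]}(x)$, which together with $\gamma_k^{(d)[1]}(x)=C^{(d)}(x)y(x)^{k+d}(1+\Ord{L^k})$ gives $\gamma_{k,h}^{(d)[1]}(x)=\Ord{y(x)^{k+d}(1-y(x)^h)}=\Ord{h/(k+h)}$. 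The Fa\`a di Bruno expansion of $\gamma_{k,h}^{(d)[\ell]}$ then contains products of these first derivatives, and the paper bounds all $\gamma_{k,h}^{(d)[\ell]}$ for $\ell\ge 2$ by $\Ord{\min(k^{\ell-1},k^{\ell-2}/(1-|y(x)|))}$ via induction on $\ell$ (not on $h$), with an inner induction on $k$ for the recursion. Your telescoping-plus-mixing idea is a plausible probabilistic route but is not what is done here and would require separate work; the paper stays entirely within singularity analysis.
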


\rem According to \cite[Theorem 12.3]{Bi} the inequality 
$$
{\bf E}\, |L_n(r) - L_n(r+h)|^\alpha = \Ord{ h^\beta (\sqrt n)^{\alpha-\beta}}
$$
implies tightness of the process $l_n(t)$ if $\alpha>0$ and $\beta>1$. In the theorem above we
have $\alpha=4$ and $\beta=2$ and thus $l_n(t)$ is tight. We remark here that in 
\cite[remark on p.2050]{DrGi} the authors erroneously stated the bound $\Ord{ (h \sqrt n)^\beta}$.

\medskip\noindent
In order to examine the dependence of the numbers of nodes for two different degrees, say $d_1$ and $d_2$ (at the same
level $k$), we will compute the covariance and the correlation. 

\begin{prop}\label{prop:covariance}
 The covariance $\mathbb{C}\mathrm{ov}(X_n^{(d_1)}(k),X_n^{(d_2)}(k))$ of random variables
$X_n^{(d_1)}(k)$ and $X_n^{(d_2)}(k)$ counting vertices of degrees $d_1$ and $d_2$, with $d_1\neq
d_2$ fixed, at level $k=\kappa{n}$ in a random P\'olya tree of size $n$ is asymptotically given by
\begin{equation}\label{eq:covariance}
 \mathbb{C}\mathrm{ov}(X_n^{(d_1)}(k),X_n^{(d_2)}(k))=
C_{d_1}C_{d_2}\rho^{d_1+d_2}n\left(\frac{2}{b^2\rho}\left(e^{-\frac{\kappa^2b^2\rho}{4}}
+e^{-\kappa^2b^2\rho}\right)-\kappa^2e^{-\frac{\kappa^2b^2\rho}{2}}\right)\(1+O\(\frac1{\sqrt
n}\)\),
\end{equation}
as $n$ tends to infinity.
\end{prop}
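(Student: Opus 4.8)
The plan is to set up a bivariate generating function that tracks nodes of degree $d_1$ and $d_2$ at level $k$ simultaneously, and then extract the covariance by differentiating at $u_1=u_2=1$ and performing a singularity analysis uniform in $k=\kappa\sqrt n$. Concretely, introduce a generating function $y_k(x,u_1,u_2)$ where $x$ marks size, $u_j$ marks nodes of degree $d_j$ that sit exactly at distance $k$ from the root, built from the recursive decomposition of P\'olya trees: a tree is a root carrying a multiset of subtrees, and one must keep careful track of how the ``level-$k$'' marking of a subtree (whose root is at level $1$) relates to level $k-1$ inside that subtree. As in the one-dimensional treatment underlying Theorems~\ref{thm:1} and \ref{th:3}, the key objects are the first derivatives $\U{\diff{u_j} y_k}$ (which give $\E X_n^{(d_j)}(k)$, hence the $\mu_{d_j}$ normalisation and the constants $C_{d_j}\rho^{d_j}$) and the mixed second derivative $\U{\partial^2 y_k/\partial u_1\partial u_2}$, from which the covariance is obtained as
\[
\mathbb{C}\mathrm{ov}(X_n^{(d_1)}(k),X_n^{(d_2)}(k))
=[x^n]\!\left(\U{\pdiff{y_k}{u_1\partial u_2}{2}}\right)\!\Big/ y_n
\;-\;\frac{[x^n]\U{\diff{u_1}y_k}}{y_n}\cdot\frac{[x^n]\U{\diff{u_2}y_k}}{y_n}+\dots
\]
(with the obvious correction coming from the $\partial^2/\partial u_j^2$ terms, which are lower order here since $d_1\ne d_2$ forces the two marks onto distinct vertices).

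The second step is to derive the recursion these derivatives satisfy. Differentiating the functional equation for $y_k$ once in $u_j$ and setting $u=1$ yields a linear recursion in $k$ with coefficient $y'(x)/$(something)$\,=\,$essentially $\partial_x$ applied to the substitution, i.e. a transfer operator whose iterate $k$ times produces, after rescaling $k=\kappa\sqrt n$ and $x\to\rho$, the Gaussian factors $e^{-\kappa^2 b^2\rho/4}$ etc.\ familiar from the Brownian-excursion local-time moments (Tak\'acs-type formulas, cf.\ the references in the introduction). The mixed second derivative satisfies an \emph{inhomogeneous} linear recursion: the inhomogeneity is a sum over the level at which the two marked vertices' subtree-branches separate, and it is quadratic in the first-derivative quantities. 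Solving this recursion — i.e.\ summing the homogeneous propagator against the quadratic source over all split levels $0\le j\le k$ — is what produces the three-term structure in \eqref{eq:covariance}: the term $\kappa^2 e^{-\kappa^2 b^2\rho/2}$ comes from the two marks lying on the same branch below the split continuing together, while the $e^{-\kappa^2 b^2\rho/4}+e^{-\kappa^2 b^2\rho}$ combination comes from the two branches separating, integrated over the split position. One then feeds the resulting singular expansion (in $x$ near $\rho$, uniformly in $\kappa$) into a transfer lemma of Flajolet--Odlyzko type to read off the $[x^n]$ asymptotics, divides by $y_n\sim b\sqrt\rho/(2\sqrt\pi)\,n^{-3/2}\rho^{-n}$, and subtracts the product of means; the leading $n^{-3/2}$ factors cancel and leave the clean order-$n$ answer with the stated error $O(1/\sqrt n)$.

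The main obstacle I expect is controlling the singularity analysis \emph{uniformly in $k$}: one needs estimates on $y_k(x,u)$ and its $u$-derivatives that are valid simultaneously for $x$ in a $\Delta$-domain around $\rho$ and for $k$ up to order $\sqrt n$, so that the passage $k=\kappa\sqrt n$, $n\to\infty$ interacts correctly with the transfer lemma and the exponential factors $\exp(-\kappa b/(2\sqrt{-\rho x}))$ emerge as the continuous limit of the $k$-fold iteration. This is exactly the kind of \emph{a priori} estimate the excerpt promises in Section~\ref{sec:3}, so the proof will invoke those bounds, combine them with the explicit local expansion \eqref{exp} of $y(x)$, and then the remaining work — bookkeeping the split-level sum, identifying which terms are of order $n$ versus $o(n)$, and verifying the $d$-dependence is absorbed into $C_{d_j}=C+\Ord{d\rho^d}$ — is a (lengthy but routine) computation. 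A secondary technical point is making sure the contributions where one or both marked degree-$d_j$ vertices coincide with ancestors of the other are correctly shown to be negligible, which is where the hypothesis $d_1\ne d_2$ is used.
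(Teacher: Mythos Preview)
Your proposal is correct and follows essentially the same route as the paper: the bivariate function $y_k^{(d_1,d_2)}(x,u,v)$ is set up, the mixed derivative $\tilde\gamma_k^{(d_1,d_2)}(x)=\partial^2_{uv}y_k|_{u=v=1}$ is shown to satisfy an inhomogeneous linear recursion in $k$ whose source term is the product $\gamma_k^{(d_1)}(x)\gamma_k^{(d_2)}(x)$, this recursion is solved as a sum over the split level (the paper's Lemma~\ref{lem:covrep}), and coefficients are extracted by a Hankel-type Cauchy integral using the \emph{a priori} bounds of Section~\ref{sec:3}.

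One small clarification: your attribution of the individual terms in \eqref{eq:covariance} is slightly off. In the paper's computation the entire split-level sum for $\mathbb{E}(X^{(d_1)}X^{(d_2)})$ produces only the $\frac{2}{b^2\rho}\bigl(e^{-\kappa^2b^2\rho/4}+e^{-\kappa^2b^2\rho}\bigr)$ part; the $\kappa^2 e^{-\kappa^2b^2\rho/2}$ term is simply the subtracted product of means $\mathbb{E}(X^{(d_1)})\,\mathbb{E}(X^{(d_2)})$, each of which is $\sim C_{d_j}\rho^{d_j}\kappa\sqrt n\,e^{-\kappa^2b^2\rho/4}$. Also, since $d_1\neq d_2$ no vertex can carry both marks, so there is no $\partial^2/\partial u_j^2$ correction to worry about at all; $\mathbb{E}(X^{(d_1)}X^{(d_2)})$ is exactly $[x^n]\tilde\gamma_k^{(d_1,d_2)}(x)/y_n$.
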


\begin{thm}\label{thm:cor}
 Let $X_n^{(d_1)}(k)$ and $X_n^{(d_2)}(k)$ be the random variables counting the number of
vertices of degree $d_1$ and $d_2$, respectively, on a level $k=\kappa\sqrt{n}$ in a P\'olya tree
of size $n$. Then the correlation coefficient \index{correlation coefficient} is asymptotically
equal to $1$ as $n$ tends to infinity. The speed of convergence is of order $\nicefrac 1{\sqrt n}$.
\end{thm}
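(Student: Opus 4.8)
The plan is to derive the correlation coefficient directly from the asymptotic covariance in Proposition~\ref{prop:covariance} together with the corresponding asymptotic \emph{variances}, which are just the covariance formula specialised to $d_1=d_2$ (equivalently, the one-degree second moment obtained by the same generating-function machinery, cf. \cite{DrGi99, Da}). Concretely, setting $d_1=d_2=d$ in the analysis that produces \eqref{eq:covariance} yields
\begin{align*}
\mathbb{V}\mathrm{ar}(X_n^{(d)}(k)) = C_d^2\rho^{2d}n\left(\frac{2}{b^2\rho}\left(e^{-\frac{\kappa^2b^2\rho}{4}}+e^{-\kappa^2b^2\rho}\right)-\kappa^2e^{-\frac{\kappa^2b^2\rho}{2}}\right)\(1+O\(\tfrac1{\sqrt n}\)\),
\end{align*}
with the identical bracket function $f(\kappa):=\frac{2}{b^2\rho}(e^{-\kappa^2b^2\rho/4}+e^{-\kappa^2b^2\rho})-\kappa^2e^{-\kappa^2b^2\rho/2}$. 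The point is that the $d$-dependence of the leading term factors completely as $C_d^2\rho^{2d}$ (resp.\ $C_{d_1}C_{d_2}\rho^{d_1+d_2}$) times a function of $\kappa$ alone that does \emph{not} depend on the degrees. Hence in the ratio
\begin{align*}
r_n(k)=\frac{\mathbb{C}\mathrm{ov}(X_n^{(d_1)}(k),X_n^{(d_2)}(k))}{\sqrt{\mathbb{V}\mathrm{ar}(X_n^{(d_1)}(k))\,\mathbb{V}\mathrm{ar}(X_n^{(d_2)}(k))}}
\end{align*}
all the constants $C_{d_1},C_{d_2},\rho^{d_1},\rho^{d_2}$, the factor $n$, and the function $f(\kappa)$ cancel, and the leading order is exactly $1$.

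The speed of convergence then comes from tracking the error terms. First I would verify that $f(\kappa)>0$ strictly for all $\kappa\ge 0$ (so that the variances are genuinely of order $n$ and the ratio is well defined for large $n$); this is an elementary estimate on the explicit exponential expression, using $2(e^{-u/4}+e^{-u})\ge b^2\rho\,\kappa^2 e^{-u/2}$ with $u=\kappa^2b^2\rho$, which holds since $e^{-u/4}\ge e^{-u/2}$ and $2e^{-u}\ge b^2\rho\kappa^2e^{-u/2}$ fails only for large $u$ but is then dominated by the first term — a short convexity/monotonicity argument. Granting $f(\kappa)\ge c_0>0$ uniformly on compact $\kappa$-ranges, each of the three second-moment asymptotics carries a multiplicative error $1+O(1/\sqrt n)$, so numerator and denominator are $A(1+O(1/\sqrt n))$ and $A(1+O(1/\sqrt n))$ with the same leading constant $A=C_{d_1}C_{d_2}\rho^{d_1+d_2}nf(\kappa)$; dividing gives $r_n(k)=1+O(1/\sqrt n)$. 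To be careful about the square root in the denominator I would expand $\sqrt{(1+O(1/\sqrt n))(1+O(1/\sqrt n))}=1+O(1/\sqrt n)$ and then $(1+O(1/\sqrt n))/(1+O(1/\sqrt n))=1+O(1/\sqrt n)$, which is routine.

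The genuinely substantive input — and the only real obstacle — is establishing the variance asymptotics with the claimed shape and error term, i.e.\ that the diagonal case $d_1=d_2$ of the covariance computation goes through and produces precisely $C_d^2\rho^{2d}n f(\kappa)(1+O(1/\sqrt n))$. This is not quite a free consequence of Proposition~\ref{prop:covariance} as stated, since that proposition assumes $d_1\neq d_2$; however the generating-function derivation of \eqref{eq:covariance} (bivariate generating function marking nodes of degree $d_1$ and $d_2$, singularity analysis à la \cite{FO90}, saddle-point/contour extraction of the $k=\kappa\sqrt n$ coefficient) is structurally insensitive to whether the two marked degrees coincide except for lower-order diagonal corrections, so the same machinery applies with the same leading term. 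Alternatively one can invoke the known second-moment/variance result for the number of degree-$d$ nodes at level $k$ from the literature cited in the excerpt. Once the variance asymptotics are in hand, the theorem follows by the elementary algebra above; no further analytic work is needed.
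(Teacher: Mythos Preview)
Your proposal is correct and follows essentially the same route as the paper: compute the variance by the same generating-function analysis as for the covariance, observe that the leading term has the identical $\kappa$-dependent factor $f(\kappa)$ with the degree-dependence entering only through the prefactor $C_d^2\rho^{2d}$, and conclude that the correlation is $1+O(1/\sqrt n)$ by cancellation. The paper makes explicit precisely the ``lower-order diagonal correction'' you anticipate: differentiating twice with respect to the \emph{same} variable produces an extra summand $C^{(d_1)}(x)y(x)^{k+d_1}$ in the analogue of Lemma~\ref{lem:covrep}, which after coefficient extraction contributes only $O(\sqrt n)$ and is absorbed into the error term (Proposition~\ref{prop:variance}); so your caveat about $d_1=d_2$ not being a literal specialisation of Proposition~\ref{prop:covariance} is well placed and resolved exactly as you suggest.
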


\subsection{Description of the problem with generating functions}
Proving Theorem \ref{th:3}, we will start with the one-dimensional case and then extend results to
multiple dimensions. Therefore, we introduce generating functions $y_k^{(d)}(x,u)$, which
represent trees where all nodes of degree $d$ on level $k$ are marked and counted by $u$. Note
that we consider planted trees instead of 'ordinary' rooted trees, that is, we assume that the
root node is adjacent to an additional node which is not counted. This assumption does not alter
the tree structure, but allows us to treat the root vertex like a normal vertex, that is, a root
of degree $d$ has in-degree $1$ and out-degree $d-1$.
Refining the decomposition of trees along their root, the $y_k^{(d)}(x,u)$ can be defined
recursively: 
\begin{align}\label{eq:rec}
y_0^{(d)} (x,u) &= y(x) + (u-1)x Z_{d-1}(y(x),y(x^2),\ldots, y(x^{d-1})) \nonumber\\
y_{k+1}^{(d)} (x,u)&= x \exp\(\sum_{i \geq 1}{\frac{y_k^{(d)}(x^i,u^i)}{i}}\),
\end{align}
where $Z_{d}(s_1,s_2,\ldots,s_d)$ is the cycle index of the symmetric group $\mathfrak{S}_d$ on
$d$ elements, given by
\[
\frac{1}{|\mathfrak{S}_d|}\sum_{\pi \in \mathfrak{S}_d}\prod_{i=1}^ds_i^{\lambda_i},
\] 
where $\lambda_i$ is the number of cycles of length $i$ in the permutation $\pi$. 

Examining two levels $k$ and $k+h$ simultaneously, we use the generating function $y_{k,h}^{(d)}(x,u_1,u_2)$ where all nodes of degree $d$ on level $k$ are marked by $u_1$ and nodes of degree $d$ on
level $k+h$ are marked by $u_2$. As before, $x$ marks the total size of the tree. We get the
recursive relation 
\begin{align}\label{eq:rec2dim}
{y}_{0,h}^{(d)}(x,u_1,u_2) &=y_h^{(d)}(x,u_2) + (u_1-1)x Z_{d-1}(y_h^{(d)}(x,u_2), \ldots, y_h^{(d)}(x^{d-1},u_2^{d-1}))\nonumber\\
{y}_{k+1,h}^{(d)}(x,u_1,u_2)&=x \exp \left( \sum\limits_{i \geq 1}\frac{{y}_{k,h}^{(d)}(x^i,u_1^i,u_2^i)}{i}\right)
\end{align}

In general, observing levels $k_1$,$k_2=k_1+h_1$,\ldots, $k_m=k_{m-1}+h_{m-1}$, we get:
\begin{align*}
{y}&_{0,h_1,\ldots,h_{m-1}}^{(d)}(x,u_1, \ldots, u_m) =y_{h_1, \ldots h_{m-1}}^{(d)}(x,u_2,\ldots, u_m) \\
&+ (u-1)x Z_{d-1}(y_{h_1, \ldots h_{m-1}}^{(d)}(x,u_2,\ldots, u_m), \ldots, y_{h_1, \ldots h_{m-1}}^{(d)}(x^{d-1},u_2^{d-1},\ldots, u_m^{d-1}))\\
{y}&_{k+1,h_1,\ldots,h_{m-1}}^{(d)}(x,u_1,\ldots, u_m)=x \exp \left( \sum\limits_{i \geq 1}\frac{{y}_{k,h_1,\ldots, h_{m-1}}^{(d)}(x^i,u_1^i,\ldots, u_m^i)}{i}\right)
\end{align*}
These functions are related to the process $L_n^{(d)}(t)$ by
\begin{align*}
\mathbb{P}(L_n^{(d)}(k)=\ell_1,L_n^{(d)}(k+h_1)=\ell_2,\ldots,L_n^{(d)}(k+\sum h_i)=\ell_m)\\
=\frac{[x^nu_1^{\ell_1}u_2^{\ell_2}\cdots u_m^{\ell_m}]{y}_{0,h_1,\ldots,h_{m-1}}^{(d)}(x,u_1, \ldots, u_m)}{[x^n]y(x)}
\end{align*}
For the computation of the covariance of the numbers of nodes of degrees $d_1$ and $d_2$ 
we will utilize the functions 
\begin{align} \label{eq:rec_var}
y_0^{(d_1,d_2)} (x,u,v) &= y(x) + (u-1)x Z_{d_1-1}(y(x),y(x^2),\ldots, y(x^{d_1-1})) \nonumber \\ 
&\qquad+(v-1)x
Z_{d_2-1}(y(x),y(x^2),\ldots, y(x^{d_2-1})) \nonumber \\
y_{k+1}^{(d_1,d_2)} (x,u,v)&= x \exp\(\sum_{i \geq 1}{\frac{y_k^{(d_1,d_2)}(x^i,u^i,v^i)}{i}}\).
\end{align}

\subsection{Notations}

For proving the theorems we will carry out a singularity analysis of the generating functions.
All generating functions are in some sense close to the tree function $y(x)$ from \eqref{treefun}. 
Therefore we will use the differences to $y(x)$ and related functions: 
Set 
\begin{align}
w_k^{(d)}(x,u)&=y_k^{(d)}(x,u)-y(x) & w_k^{(d_1,d_2)}(x,u,v)&=y_k^{(d_1,d_2)}(x,u,v)-y(x)\nonumber\\
\Sigma_k^{(d)}(x,u)&=\sum_{i \geq 2}\frac{w_k^{(d)}(x^i,u^i)}{i} & \Sigma_k^{(d_1,d_2)}(x,u,v)&=
\sum_{i
\geq 2}\frac{w_k^{(d_1,d_2)}(x^i,u^i,v^i)}{i}\nonumber\\
\gamma_k^{(d)}(x,u)&=\frac{\partial}{\partial u}y_k^{(d)}(x,u) &
\tilde \gamma_k^{(d_1,d_2)}(x,u,v)&=\frac{\partial^2}{\partial u\partial v}y_k^{(d_1,d_2)}(x,u,v)\label{eq:tildeg}\\
\gamma_k^{(d)[2]}(x,u)&=\frac{\partial^2}{\partial u^2}y_k^{(d)}(x,u)\nonumber
\end{align}

We further introduce some domains, depicted in Figure~\ref{fig:regions}:
\begin{align}
 \Delta=\Delta(\eta,\theta)&=\{z\in\mathbb{C}\big||z|<\rho+\eta,|\arg(z-\rho)|>\theta\},\\
 \Delta_\epsilon=
\Delta_\epsilon(\theta)&=\{z\in\mathbb{C}\big||z-\rho|<\epsilon,|\arg(z-\rho)|>\theta\},\\
 \Theta=\Theta(\eta)&=\{z\in\mathbb{C}\big||z|<\rho+\eta, |\arg(z-\rho)|\neq0\},\\
 \Xi_k=\Xi_k(\tilde{\eta})&=\{z\in\mathbb{C}\big||v|\leq1,k|v-1|\leq\tilde{\eta}\},
\end{align}
with $\epsilon,\eta,\tilde{\eta}>0$ and $0<\theta<\frac{\pi}{2}$.
\begin{figure}[ht]
\centering
\includegraphics[width=0.8\textwidth]{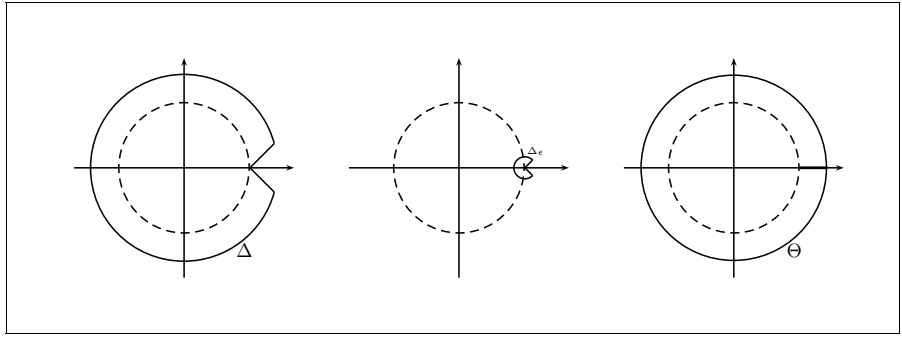}
\caption{The regions used for the proofs}\label{fig:regions}
\end{figure}

In all the proofs in the subsequent sections we will assume (even without explicitely mentioning)
that $\eta,\theta,\epsilon$ are sufficiently small for all arguments to be valid.

\section{The local behaviour of $y_k^{(d)}(x,u)$ and $y_k^{(d_1,d_2)}(x,u,v)$ -- A priori
bounds}\label{sec:3}

In order to analyze the local behaviour of the generating functions we will first derive \emph{a
priori} estimates for $w_k^{(d)}$, $w_k^{(d_1,d_2)}$ and the related functions which will be
used frequently in the sequel to derive the needed refinements. 

\begin{lem}
Let $|x|\leq\rho^2+\varepsilon$ for sufficiently small $\varepsilon$ and $|u|\leq1$. Then there
exists a constant $L$ with $0<L<1$ and a positive constant $D$ such that \[|\d{w}(x,u)| \leq D |u-1|\cdot |x|^d \cdot L^k\]
\end{lem}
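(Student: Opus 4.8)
The plan is to prove the bound by induction on $k$, exploiting the recursive definition \eqref{eq:rec} of $y_k^{(d)}(x,u)$ and hence of $w_k^{(d)}$. First I would record the base case: from $y_0^{(d)}(x,u) = y(x) + (u-1)x Z_{d-1}(y(x),\dots,y(x^{d-1}))$ we get $w_0^{(d)}(x,u) = (u-1)x Z_{d-1}(y(x),\dots,y(x^{d-1}))$, and since for $|x|\le\rho^2+\varepsilon$ all the arguments $y(x^i)$ stay bounded (in fact $y(x^i)=O(|x|^i)$ for $i\ge 1$ when $|x|$ is this small, because $y(0)=0$), the cycle index $Z_{d-1}$ is bounded by a constant to the power $d$; combined with the explicit factor $x$ this yields $|w_0^{(d)}(x,u)|\le D'|u-1|\,|x|^d$ for a suitable $D'$, which is the claim for $k=0$ with room to spare (the factor $L^0=1$).

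For the inductive step I would write, using the second line of \eqref{eq:rec} together with \eqref{treefun},
\begin{align*}
w_{k+1}^{(d)}(x,u) &= y_{k+1}^{(d)}(x,u) - y(x)
= x\exp\!\left(\sum_{i\ge 1}\frac{y(x^i)}{i}\right)\left(\exp\!\left(\sum_{i\ge 1}\frac{w_k^{(d)}(x^i,u^i)}{i}\right) - 1\right).
\end{align*}
The prefactor $x\exp(\sum_i y(x^i)/i) = y(x)$ is bounded on the disk in question. For the bracketed factor I would use $|e^z-1|\le |z|e^{|z|}$ together with the crude bound $\left|\sum_{i\ge1}\frac{w_k^{(d)}(x^i,u^i)}{i}\right|\le \sum_{i\ge1}\frac{|w_k^{(d)}(x^i,u^i)|}{i}$, and then apply the induction hypothesis to each term $w_k^{(d)}(x^i,u^i)$ — noting that $|x^i|\le|x|\le\rho^2+\varepsilon$ and $|u^i|\le1$ remain in the valid range, and that $|u^i-1|\le i|u-1|$ for $|u|\le1$. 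This gives
\[
\sum_{i\ge1}\frac{|w_k^{(d)}(x^i,u^i)|}{i}\le \sum_{i\ge1}\frac{1}{i}\cdot D\,|u^i-1|\,|x^i|^d L^k
\le D\,|u-1|\,L^k\sum_{i\ge1}|x|^{id} = D\,|u-1|\,L^k\,\frac{|x|^d}{1-|x|^d}.
\]
Since $|x|^d\le(\rho^2+\varepsilon)^d$ is small, the sum over $i$ contributes a factor $|x|^d$ times something close to $1$; multiplying by the bounded prefactor and the exponential correction $e^{|z|}$ (which is also uniformly bounded because $z$ is small), I obtain $|w_{k+1}^{(d)}(x,u)|\le K\cdot D\,|u-1|\,|x|^d L^k$ for an absolute constant $K$ depending only on $\rho,\varepsilon$ and the bound on $y$.

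The point now is to choose the constants correctly so the induction closes: I want $K\cdot D\, L^k \le D\, L^{k+1}$, i.e. $K\le L$. The subtlety — and what I expect to be the main obstacle — is that $K$ as obtained above need not be less than $1$: the factor $|y(x)|$ can exceed $1$ near $x=\rho$ (though here $|x|\le\rho^2+\varepsilon<\rho$, so actually $|y(x)|<1$, which helps) and more importantly the naive estimate loses the decisive gain. The resolution is to extract the genuine smallness from the factor $|x|^d$ (equivalently $\sum_{i\ge1}|x|^{id}=|x|^d/(1-|x|^d)$, which is $O((\rho^2+\varepsilon)^d)$): as long as $d$ is at least some absolute threshold, $K<1$ and one can take $L:=K$; for the finitely many small values of $d$ one absorbs everything into the constant $D$ by enlarging it, or simply notes that $d\ge 1$ already forces $|x|^d\le\rho^2+\varepsilon$ small enough. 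One then fixes $D$ at the base-case value (enlarged if necessary so that $D'\le D$) and $L:=\max(K,\tfrac12)<1$, and the induction goes through. Care must be taken that all bounds are uniform in $x$ and $u$ over the stated region and independent of $k$ and $d$; I would state the dependence of $D,L$ on $\rho$ and $\varepsilon$ explicitly at the end.
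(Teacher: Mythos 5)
Your overall strategy coincides with the paper's: induction on $k$ via the recurrence $w_{k+1}^{(d)}(x,u)=y(x)\bigl(\exp\bigl(\sum_{i\ge1}w_k^{(d)}(x^i,u^i)/i\bigr)-1\bigr)$, with the base case read off from the cycle-index term. The base case and the algebra of the inductive step are fine; the gap is in how you close the induction.

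First, you misplace the source of the contraction factor $L$. Once you bound $\sum_{i\ge1}|w_k^{(d)}(x^i,u^i)|/i\le D|u-1|L^k|x|^d/(1-|x|^d)$, the factor $|x|^d$ is entirely consumed by the $|x|^d$ required in the target bound; what is left over is $1/(1-|x|^d)\ge1$, which gives no gain no matter how large $d$ is. So the claim that ``as long as $d$ is at least some absolute threshold, $K<1$'' is a double count, and the fallback of absorbing the discrepancy into $D$ for small $d$ cannot work: a per-step factor $K\ge1$ compounds to $K^k$ and cannot be hidden in a $k$-independent constant. The contraction actually comes from the prefactor you set aside in a parenthesis: on $|x|\le\rho^2+\varepsilon$ one has $|y(x)|\le y(\rho^2+\varepsilon)\approx0.13$, well below $1$ --- this is exactly why the lemma is stated on the disk of radius $\rho^2+\varepsilon$ rather than $\rho+\varepsilon$, where the argument would fail. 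Second, your bound on $e^{|z|}$ is circular: you control $|z|$ by the induction hypothesis, so the bound on $e^{|z|}$ depends on $D$, while $D$ must be at least the base-case constant (which itself grows with $d$ through the cycle index); for large $D$ the resulting $K$ exceeds $1$. The paper (following Lemma~2 of \cite{DrGi}) avoids this by invoking the a priori estimate $|w_k^{(d)}(x,u)|\le2y(|x|)$ (valid because $|y_k^{(d)}(x,u)|\le y_k^{(d)}(|x|,1)=y(|x|)$), which together with the functional equation gives $e^{|z|}\le(y(|x|)/|x|)^2$ independently of $D$; the induction hypothesis is used only for the linear factor $|z|$. With these two repairs your induction closes, with $L$ essentially $y(\rho^2+\varepsilon)\,(y(|x|)/|x|)^2/(1-|x|^d)<1$.
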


\begin{proof} We will only provide a short sketch, since the proof is similar to that of
\cite[Lemma~2]{DrGi}. 

 For $k=0$ we have \[|w_0^{(d)}|(x,u)|=|u-1|\cdot|x|\cdot\underbrace{|Z_{d-1}(y(x),y(x^2),\ldots)|}_{\mathcal{O}(|y(x)|^{d-1})=\mathcal{O}(|x|^{d-1})} \leq |u-1| \cdot D\cdot|x|^d\]

The result for general $\d{w}(x,u)$ follows by induction. Starting with the recurrence relation
$$
w_{k+1}^{(d)}(x,u)=y(x)\(\exp\(\d{w}(x,u)+\sum_{i\ge 2}\frac{\d{w}(x^i,u^i)}i\)-1\)
$$
we use the trivial estimate $|w_k(x,u)|\le 2y(|x|)$ which is valid for $|x|\le \rho$ and $|u|\le
1$, the convexity of $\nicefrac{y(x)}{x}$ on the positive reals, and some elementary estimates for
$e^x$. For the precise details see \cite{DrGi}. 
\end{proof}

\begin{cor}\label{cor:sigma}
For $|u|\leq 1$ and $|x|\leq \rho + \varepsilon$ there is a positive constant $\tilde{C}$ such that (for all $k \geq 0, d\geq 1$) \[|\Sigma_k^{(d)}(x,u)|\leq \tilde{C}|u-1|L^k.\]
\end{cor}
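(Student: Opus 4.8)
The plan is to deduce the bound on $\Sigma_k^{(d)}$ directly from the previous lemma by summing the geometric-type series defining it. Recall that $\Sigma_k^{(d)}(x,u)=\sum_{i\ge 2}\frac1i w_k^{(d)}(x^i,u^i)$, so the first step is to apply the estimate $|w_k^{(d)}(z,v)|\le D|v-1|\,|z|^d L^k$ to each summand with $z=x^i$ and $v=u^i$. The geometric factor $L^k$ is independent of $i$ and pulls out of the sum immediately; what remains is to control $\sum_{i\ge2}\frac1i|u^i-1|\,|x|^{di}$.

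The next step is to bound $|u^i-1|$ in terms of $|u-1|$. For $|u|\le1$ one has the elementary factorization $u^i-1=(u-1)(u^{i-1}+u^{i-2}+\cdots+1)$, whence $|u^i-1|\le i|u-1|$. Substituting this, the $\frac1i$ cancels and we are left with $|u-1|\sum_{i\ge2}|x|^{di}$. Since $|x|\le\rho+\varepsilon<1$ for $\varepsilon$ small, and $d\ge1$, we have $|x|^{di}\le|x|^{2}\cdot|x|^{d(i-2)}\le(\rho+\varepsilon)^{2i-2}$ or more simply the series $\sum_{i\ge2}|x|^{di}$ converges and is bounded by a constant uniformly in $d\ge1$ (for instance by $\sum_{i\ge2}(\rho+\varepsilon)^i=(\rho+\varepsilon)^2/(1-(\rho+\varepsilon))$). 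Collecting the constants $D$ and the value of this convergent series into a single constant $\tilde C$ yields $|\Sigma_k^{(d)}(x,u)|\le\tilde C|u-1|L^k$, which is exactly the claim, and the bound is uniform in $k\ge0$ and $d\ge1$ as required.

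One technical point to watch is the range of $x$: the lemma is stated for $|x|\le\rho^2+\varepsilon$, whereas the corollary asserts the bound for $|x|\le\rho+\varepsilon$. This is consistent because in $\Sigma_k^{(d)}$ the variable $x$ only enters through powers $x^i$ with $i\ge2$, so $|x|\le\rho+\varepsilon$ forces $|x^i|\le(\rho+\varepsilon)^2\le\rho^2+\varepsilon'$ for a suitably adjusted $\varepsilon'$, putting every argument $x^i$ back into the regime where the lemma applies. Similarly $|u|\le1$ gives $|u^i|\le1$, so the hypotheses of the lemma are met for each term. There is no real obstacle here; the only thing that requires a moment's care is making the constants genuinely uniform in $d$, which the estimate $|x|^{di}\le(\rho+\varepsilon)^i$ for $d\ge1$ handles cleanly.
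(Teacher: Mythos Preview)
Your proof is correct and follows essentially the same approach as the paper, which simply refers back to \cite[Corollary~1]{DrGi}: bound each summand $w_k^{(d)}(x^i,u^i)$ via the preceding lemma, use $|u^i-1|\le i|u-1|$, and sum the resulting geometric series. Your remark about the passage from the domain $|x|\le\rho+\varepsilon$ to $|x^i|\le\rho^2+\varepsilon'$ is exactly the point that makes the argument work.
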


\begin{proof}
Same as proof of Corollary 1 in \cite{DrGi}.
\end{proof}

\begin{cor}\label{cor:sumgamma}
 Let $u \in \Xi_k$ and $x \in \Theta$. Then 
\[\sum_{i \geq 2}\gamma_k^{(d)}(x^i,u^i) = \mathcal{O}(L^k).\]
\end{cor}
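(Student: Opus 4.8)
The plan is to bound $\gamma_k^{(d)}(x,u)$ first and then sum the geometric-type series that arises from substituting $x\mapsto x^i$, $u\mapsto u^i$. Differentiating the recurrence $y_{k+1}^{(d)}(x,u)=x\exp\bigl(\sum_{i\ge1}y_k^{(d)}(x^i,u^i)/i\bigr)$ with respect to $u$ yields
\begin{align*}
\gamma_{k+1}^{(d)}(x,u)=y_{k+1}^{(d)}(x,u)\Bigl(\gamma_k^{(d)}(x,u)+\sum_{i\ge2}u^{i-1}\gamma_k^{(d)}(x^i,u^i)\Bigr),
\end{align*}
and differentiating the initial condition gives $\gamma_0^{(d)}(x,u)=x\,Z_{d-1}(y(x),\dots)=\Ord{|x|^d}$. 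Using the a priori bound $|y_k^{(d)}(x,u)|\le 2y(|x|)$ on $\Theta$ together with the already-established bound $|\Sigma_k^{(d)}(x,u)|\le\tilde C|u-1|L^k$ of Corollary~\ref{cor:sigma} (which controls the contribution of the tail $\sum_{i\ge2}$), an induction on $k$ should produce a bound of the shape $|\gamma_k^{(d)}(x,u)|\le K\,\tilde L^k$ for some $0<\tilde L<1$ and a constant $K$, valid uniformly for $u\in\Xi_k$ and $x\in\Theta$; the condition $u\in\Xi_k$ is exactly what keeps $|u^{i}|$ and $|u^{i-1}|$ bounded and $k|u-1|$ small, so the error terms stay under control through all $k$ steps.

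With a single-variable bound $|\gamma_k^{(d)}(x,u)|\le K\tilde L^k$ in hand, I would then attack the sum $\sum_{i\ge2}\gamma_k^{(d)}(x^i,u^i)$ directly. For $i\ge2$ and $x\in\Theta$ we have $|x^i|\le(\rho+\eta)^i$, which for $i$ large lies well inside the disk of convergence, so on top of the uniform bound one gains extra decay in $i$: more precisely, tracking the $|x|^d$-type factor coming from $\gamma_0^{(d)}$ through the recurrence, one expects an estimate like $|\gamma_k^{(d)}(x^i,u^i)|\le K'\,|x|^{di}\,\tilde L^k$ or at worst $|\gamma_k^{(d)}(x^i,u^i)|\le K'\,r^i\,\tilde L^k$ with $r<1$ for $i\ge 2$, $x$ bounded away from the part of $\Theta$ near modulus~$1$. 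Summing the geometric series $\sum_{i\ge2}r^i$ then gives the claimed $\Ord{L^k}$ bound (after possibly enlarging $L$ to absorb $\tilde L$). One has to be a little careful near $|x|=\rho+\eta$ where $|x^i|$ is not small, but there the number of relevant terms is still governed by the same geometric decay since $\rho+\eta<1$ for $\eta$ small, so $\sum_{i\ge2}(\rho+\eta)^{di}$ converges.

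The main obstacle, I expect, is the uniformity in $d$ and especially in $k$ simultaneously with $u\in\Xi_k$: the naive induction step loses a factor each time from the $\sum_{i\ge2}u^{i-1}\gamma_k^{(d)}(x^i,u^i)$ term, and one must show this cumulative loss does not destroy the geometric decay $\tilde L^k$. This is precisely where the shrinking region $\Xi_k=\{|u|\le1,\ k|u-1|\le\tilde\eta\}$ earns its keep: the bound $|u-1|\le\tilde\eta/k$ means the perturbative corrections are summable in $k$ (like $\sum\tilde\eta/k$ inside an exponential, or better, once combined with the $L^k$ from $\Sigma_k^{(d)}$), so the product telescopes to something bounded. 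Mimicking the corresponding step in \cite{DrGi} for the ordinary profile and merely inserting the extra $|x|^d$ factor from the degree-$d$ initial condition should make this routine; I would state it as ``the proof follows the lines of \cite[Corollary~2 and its proof]{DrGi}, the only new feature being the factor $|x|^d$ in $\gamma_0^{(d)}$, which survives the recursion and supplies the convergence of $\sum_{i\ge2}$.''
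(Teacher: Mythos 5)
Your plan hinges on an intermediate claim that is false: there is no bound of the form $|\gamma_k^{(d)}(x,u)|\le K\tilde L^k$ with $\tilde L<1$ uniformly for $x\in\Theta$ and $u\in\Xi_k$. Indeed, Lemma~\ref{l:g} gives $\gamma_k^{(d)}(x,1)=C_k^{(d)}(x)y(x)^{k+d}$ with $C_k^{(d)}(\rho)\to C_d\rho^d\neq0$, so for $x\in\Theta$ close to $\rho$ (such points belong to $\Theta$) the first derivative stays bounded away from $0$ as $k\to\infty$; no induction on the recurrence $\gamma_{k+1}^{(d)}=y_{k+1}^{(d)}\bigl(\gamma_k^{(d)}(x,u)+\sum_{i\ge2}u^{i-1}\gamma_k^{(d)}(x^i,u^i)\bigr)$ can produce geometric decay there, because the prefactor $|y_{k+1}^{(d)}(x,u)|$ is close to $1$ near the singularity. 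The exponential decay asserted in the corollary comes \emph{only} from the substitution $x\mapsto x^i$ with $i\ge2$: then $|x^i|\le(\rho+\eta)^2<\rho$, so the arguments land in a disk where $y$ is bounded away from $1$ and one indeed has $\gamma_k^{(d)}(x^i,u^i)=\Ord{y(|x|^i)^{k+d}}=\Ord{L^k}$ together with summability in $i$. Your argument becomes correct if you establish the inductive bound only on the small disk $|x|\le\rho^2+\varepsilon$ (using positivity of coefficients to reduce to real $x$ and $u=1$), not on all of $\Theta$; as written, the first step fails. A side remark: there is no ``part of $\Theta$ near modulus $1$'' to worry about, since $\Theta$ is contained in $|z|<\rho+\eta\approx0.34$.

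For comparison, the paper avoids any pointwise induction on $\gamma_k^{(d)}$ altogether. It observes that $\Gamma_k^{(d)}(x,u)=\sum_{i\ge2}\gamma_k^{(d)}(x^i,u^i)$ has nonnegative coefficients, hence $|\Gamma_k^{(d)}(x,u)|\le\Gamma_k^{(d)}(|x|,|u|)$ and the latter is monotone; then, for real $0<u<1$, the mean value theorem gives $\Sigma_k^{(d)}(x,u)=(u-1)\,\partial_u\Sigma_k^{(d)}(x,1+\vartheta(u-1))$, whose right-hand side dominates $(u-1)\Gamma_k^{(d)}(x,u)$ by monotonicity. Dividing by $|u-1|$ and invoking Corollary~\ref{cor:sigma}, the bound $\tilde C L^k$ is inherited directly from the already proved estimate on $\Sigma_k^{(d)}$. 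This is shorter and sidesteps exactly the uniformity issue your induction runs into.
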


\begin{proof} 
As $i \geq 2$ the functions $\gamma_k^{(d)}(x^i,u^i)$ are analytic in the whole region and  
$\Gamma_k^{(d)}(x,u):=\sum_{i\ge 2}\gamma_k^{(d)}(x^i,u^i) = \sum_{n,m}y_{nmk}^{(d)}x^ny^m$ with 
positive coefficients $y_{knm}^{(d)}$, we have $|\Gamma_k^{(d)}(x,u)| \leq
\Gamma_k^{(d)}(|x|,|u|)$ where the right-hand side is monotone in $|x|$ and $|u|$. 

Now let $x\ge 0$ and $0<u<1$. Using Taylor's theorem we get 
$$
\d\Sigma(x,u)=(u-1)\d\Gamma(x,1+\vartheta (u-1))\ge \d\Gamma(x,u).
$$
In view of Corollary~\ref{cor:sigma} this implies for all $x\in\Theta$ and $u\in \Xi_k$ the
estimate $|\d\Gamma(x,u)|\le \d\Gamma(|x|,|u|) \le CL^k$ for some positive constant $L<1$.
\end{proof}

In a similar fashion we obtain the analogous results for $w_k^{(d_1,d_2)}$: 

\begin{lem}
If $|x|\leq\rho^2+\varepsilon$ for sufficiently small $\varepsilon$, $|u|\leq1$, and $|v|\le 1$, 
then there 
exists a constant $L$ with $0<L<1$ and a positive constant $D$ such that
\[|w_k^{(d_1,d_2)}(x,u,v)| \leq D L^k (|u-1|\cdot |x|^{d_1}+|v-1|\cdot |x|^{d_2}) \]
\end{lem}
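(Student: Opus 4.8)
The plan is to mimic the proof of the single-degree bound (the first Lemma of this section), using induction on $k$, but keeping track of the two additive contributions corresponding to degrees $d_1$ and $d_2$ separately.

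\medskip\noindent\textbf{Base case.} For $k=0$ we have from \eqref{eq:rec_var} that
\[
w_0^{(d_1,d_2)}(x,u,v)=(u-1)x\,Z_{d_1-1}(y(x),\dots,y(x^{d_1-1}))+(v-1)x\,Z_{d_2-1}(y(x),\dots,y(x^{d_2-1})).
\]
Since $Z_{d-1}(y(x),y(x^2),\dots)=\Ord{|y(x)|^{d-1}}=\Ord{|x|^{d-1}}$ uniformly on $|x|\le\rho^2+\varepsilon$ (as in the single-degree case, using $y(x)=\Ord{|x|}$ near $0$), each term is bounded by a constant times $|u-1|\,|x|^{d_1}$ respectively $|v-1|\,|x|^{d_2}$, which gives the claim for $k=0$ with $L^0=1$, provided $D$ is chosen large enough.

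\medskip\noindent\textbf{Induction step.} From \eqref{eq:rec_var} and \eqref{treefun}, exactly as in the one-parameter case, one rewrites the recursion in the form
\[
w_{k+1}^{(d_1,d_2)}(x,u,v)=y(x)\left(\exp\left(w_k^{(d_1,d_2)}(x,u,v)+\sum_{i\ge2}\frac{w_k^{(d_1,d_2)}(x^i,u^i,v^i)}{i}\right)-1\right).
\]
Now I would proceed precisely along the lines sketched for the single-degree lemma: use the trivial bound $|w_k^{(d_1,d_2)}(x,u,v)|\le 2y(|x|)$ valid for $|x|\le\rho$, $|u|\le1$, $|v|\le1$ (which follows since both $y_k^{(d_1,d_2)}$ and $y$ have nonnegative coefficients and agree at $u=v=1$), together with the convexity of $y(x)/x$ on the positive reals and the elementary inequality $|e^z-1|\le|z|e^{|z|}$. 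The inductive hypothesis applied at all points $x^i$ ($i\ge1$) yields that the exponent is bounded by a geometric-type series in $L^k$; summing $\sum_{i\ge1}L^k(|u^i-1|\,|x^i|^{d_1}+\cdots)/i$ and using $|u^i-1|\le i|u-1|$ for $|u|\le1$ together with $|x^i|^{d_j}\le|x|^{d_j}$ collapses the sum to a constant multiple of $L^k(|u-1|\,|x|^{d_1}+|v-1|\,|x|^{d_2})$. Multiplying by $|y(x)|$ and absorbing the extra factor into $L$ (choosing $L<1$ appropriately, as is possible because $y(|x|)<1$ for $|x|<\rho$ and one argues the boundary case $|x|=\rho$ separately, again as in \cite{DrGi}) closes the induction with $L^{k+1}$.

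\medskip\noindent The main obstacle, as in \cite{DrGi}, is the bookkeeping needed to show that the contraction factor can be taken strictly less than $1$ uniformly: one must carefully handle the region $|x|\le\rho^2+\varepsilon$ versus $|x|\le\rho$, exploit the convexity of $y(x)/x$ to control $y(x^i)/x^i$ against $y(x)/x$, and verify that the constant $D$ is not inflated at each step. Since the structure is additive in the two degree-contributions and otherwise identical to the one-parameter computation, no genuinely new difficulty arises; I would therefore present this proof in the same condensed style as the first lemma, referring to \cite{DrGi} for the routine estimates on $e^z$ and the convexity argument.
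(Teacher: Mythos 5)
Your proposal is correct and follows essentially the same route as the paper, which gives no separate proof for the two-degree case but simply notes that it is obtained "in a similar fashion" to the single-degree lemma (itself a sketch referring to \cite{DrGi} for the trivial bound, the convexity of $y(x)/x$, and the elementary exponential estimates). Your additive bookkeeping of the two contributions in the base case and induction step is exactly the intended adaptation.
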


\begin{cor}
For $|u|\leq 1$, $|v|\le 1$ and $|x|\leq \rho + \varepsilon$ there is a
positive constant $\tilde{C}$ such that (for all $k \geq 0, d\geq 1$)
\[|\Sigma_k^{(d_1,d_2)}(x,u,v)|\leq
\tilde{C}(|u-1|+|v-1|)L^k.\]
\end{cor}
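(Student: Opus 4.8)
The plan is to mimic the proof of Corollary~\ref{cor:sigma} (equivalently Corollary~1 of \cite{DrGi}) almost verbatim, the only new ingredient being the two-parameter a priori bound from the lemma immediately preceding this statement. Recall that $\Sigma_k^{(d_1,d_2)}(x,u,v)=\sum_{i\ge 2}\frac1i\,w_k^{(d_1,d_2)}(x^i,u^i,v^i)$, so it suffices to bound each summand and sum a geometric series. First I would dispose of the domain bookkeeping: if $\varepsilon$ is chosen small enough that $\rho+\varepsilon<1$ and $(\rho+\varepsilon)^2\le\rho^2+\varepsilon$, then for $|x|\le\rho+\varepsilon$ and every $i\ge 2$ we have $|x^i|=|x|^i\le(\rho+\varepsilon)^2\le\rho^2+\varepsilon$, while trivially $|u^i|\le 1$ and $|v^i|\le 1$. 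Hence the preceding lemma applies to $w_k^{(d_1,d_2)}(x^i,u^i,v^i)$ and yields $|w_k^{(d_1,d_2)}(x^i,u^i,v^i)|\le D L^k\bigl(|u^i-1|\,|x|^{id_1}+|v^i-1|\,|x|^{id_2}\bigr)$.

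Next I would insert the elementary estimate $|z^i-1|\le i\,|z-1|$, valid whenever $|z|\le 1$, which follows from $z^i-1=(z-1)(z^{i-1}+\cdots+z+1)$ and the triangle inequality; applying it with $z=u$ and with $z=v$ turns the bound into $\frac1i\,|w_k^{(d_1,d_2)}(x^i,u^i,v^i)|\le D L^k\bigl(|u-1|\,|x|^{id_1}+|v-1|\,|x|^{id_2}\bigr)$. Summing over $i\ge 2$ and using $d_1,d_2\ge 1$ together with $|x|<1$, one gets $\sum_{i\ge 2}|x|^{id_j}\le\sum_{i\ge 2}|x|^i=\frac{|x|^2}{1-|x|}\le\frac{(\rho+\varepsilon)^2}{1-\rho-\varepsilon}=:K$, a constant independent of $k$, $d_1$ and $d_2$. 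Therefore $|\Sigma_k^{(d_1,d_2)}(x,u,v)|\le DK\,L^k\bigl(|u-1|+|v-1|\bigr)$, which is the asserted inequality with $\tilde C=DK$.

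There is essentially no obstacle here; the only point worth stating explicitly is that all the constants are uniform in $d_1$ and $d_2$. This is automatic: the computation only ever used $d_1,d_2\ge 1$, and in fact $\sum_{i\ge2}|x|^{id}=\frac{|x|^{2d}}{1-|x|^{d}}$ is decreasing in $d$ for $|x|<1$, so the case $d=1$ already supplies the uniform constant $K$, while $D$ and $L$ are inherited from the preceding lemma, which is itself uniform in $d_1,d_2$. The same scheme, started from a Taylor-expansion argument in the spirit of the proof of Corollary~\ref{cor:sumgamma}, would give the analogous exponential decay bound for the mixed derivative $\tilde\gamma_k^{(d_1,d_2)}$ should it be needed in the later sections.
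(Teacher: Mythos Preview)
Your proof is correct and follows exactly the approach the paper intends: the paper gives no explicit argument for this corollary, but states the single-degree version (Corollary~\ref{cor:sigma}) with the one-line justification ``Same as proof of Corollary~1 in \cite{DrGi}'', and the two-degree version is meant to be read the same way. Your write-up spells out precisely that argument --- apply the preceding lemma to each summand $w_k^{(d_1,d_2)}(x^i,u^i,v^i)$, use $|z^i-1|\le i|z-1|$, and sum the resulting geometric series --- including the domain check $(\rho+\varepsilon)^2\le\rho^2+\varepsilon$ that makes the lemma applicable for $i\ge 2$.
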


\begin{cor} 
Let $u \in \Xi_k$, $v \in \Xi_k$, and $x \in \Theta$. Then 
\begin{equation} \label{eq:tildegamma} 
\sum_{i \geq 2}\tilde \gamma_k^{(d_1,d_2)}(x^i,u^i,v^i) = \mathcal{O}(L^k). 
\end{equation}  
\end{cor}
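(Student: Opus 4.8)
The plan is to mimic the proof of Corollary~\ref{cor:sumgamma}, replacing the single mark $u$ by the pair $(u,v)$ and the first derivative by the mixed second derivative. First I would set $\tilde\Gamma_k^{(d_1,d_2)}(x,u,v) := \sum_{i\ge 2}\tilde\gamma_k^{(d_1,d_2)}(x^i,u^i,v^i)$ and observe that, since each summand has $i\ge 2$, all the functions $x^i$, $u^i$, $v^i$ are analytic in the relevant region (the factor $i\ge2$ removes the square-root singularity at $\rho$), so $\tilde\Gamma_k^{(d_1,d_2)}$ is an honest power series in $x,u,v$. Moreover $y_k^{(d_1,d_2)}(x,u,v)$ has nonnegative coefficients, and each application of $\partial_u\partial_v$ keeps coefficients nonnegative (differentiating a series with nonnegative coefficients with respect to a variable appearing with nonnegative exponents again yields nonnegative coefficients). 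Hence $|\tilde\Gamma_k^{(d_1,d_2)}(x,u,v)| \le \tilde\Gamma_k^{(d_1,d_2)}(|x|,|u|,|v|)$ and the right-hand side is monotone increasing in each of $|x|,|u|,|v|$.

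Next I would relate this second mixed derivative back to the a priori bound on $\Sigma_k^{(d_1,d_2)}$ via Taylor's theorem, exactly as was done in Corollary~\ref{cor:sumgamma} for the one-dimensional case. For $x\ge 0$ and $0<u,v<1$, a second-order Taylor expansion of $\Sigma_k^{(d_1,d_2)}(x,u,v)$ in the two variables $(u,v)$ around $(1,1)$ — or, more cleanly, expanding first in $u$ and then the resulting coefficient in $v$ — produces a term of the form $(u-1)(v-1)\,\tilde\Gamma_k^{(d_1,d_2)}(x,1+\vartheta_1(u-1),1+\vartheta_2(v-1))$ for some $\vartheta_1,\vartheta_2\in(0,1)$, plus pure second-derivative terms in $u$ alone and $v$ alone. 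By the nonnegativity of the coefficients of $\Sigma_k^{(d_1,d_2)}$, each of these remainder pieces is nonnegative and bounded above by the corresponding quantity evaluated at $u=v=0$; in particular the mixed piece is bounded by $\tilde\Gamma_k^{(d_1,d_2)}(x,1,1)$ up to constants. Combining this with the a priori estimate $|\Sigma_k^{(d_1,d_2)}(x,u,v)|\le \tilde C(|u-1|+|v-1|)L^k$ from the preceding Corollary gives, after dividing by $(u-1)(v-1)$ and letting $u,v\to 0$, the bound $\tilde\Gamma_k^{(d_1,d_2)}(|x|,|u|,|v|) = \Ord{L^k}$ uniformly for $x\in\Theta$, $u\in\Xi_k$, $v\in\Xi_k$.

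The main obstacle, as in the one-dimensional case, is the bookkeeping needed to pass cleanly from the bound on $\Sigma_k^{(d_1,d_2)}$ (which controls $w_k$, a zeroth-order object) to a bound on a \emph{second} derivative of $y_k$; one must make sure the Taylor remainder genuinely dominates the mixed derivative with the right sign and that evaluating at the boundary $u=v=0$ (where the a priori lemma on $w_k^{(d_1,d_2)}$ still applies, since $|x|\le\rho^2+\varepsilon$ is comfortably inside the domain after the substitutions $x\mapsto x^i$ with $i\ge2$) does not lose the exponential factor $L^k$. A secondary technical point is checking that $u\in\Xi_k$, $v\in\Xi_k$ indeed forces $|u|,|v|\le 1$ and $k|u-1|, k|v-1|\le\tilde\eta$, so that $1+\vartheta_j(u_j-1)$ stays in the region where the a priori estimates hold and the $k$-dependence in $\Xi_k$ is harmless. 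Once these points are in place the argument is a routine transcription of the proof of Corollary~\ref{cor:sumgamma}, so I would simply write ``the proof is analogous to that of Corollary~\ref{cor:sumgamma}, using the preceding Corollary in place of Corollary~\ref{cor:sigma} and a second-order Taylor expansion'' and indicate the mixed-derivative Taylor step explicitly.
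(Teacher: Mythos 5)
Your setup (defining $\tilde\Gamma_k^{(d_1,d_2)}(x,u,v)=\sum_{i\ge2}\tilde\gamma_k^{(d_1,d_2)}(x^i,u^i,v^i)$, positivity of coefficients, reduction to real nonnegative $x,u,v$) is fine, but the central step does not work: the available a priori bound $|\Sigma_k^{(d_1,d_2)}(x,u,v)|\le\tilde C(|u-1|+|v-1|)L^k$ is only \emph{first} order in $(u-1,v-1)$ and therefore cannot control a \emph{second} mixed derivative at $u=v=1$. Concretely, after your iterated Taylor expansion the mixed remainder is $(u-1)(v-1)\,\partial_u\partial_v\Sigma_k^{(d_1,d_2)}(x,\xi_1,\xi_2)$ with $\xi_j=1+\vartheta_j(u_j-1)$, and isolating it costs you the terms $\Sigma(x,u,v)$, $\Sigma(x,1,v)$ and $(u-1)\partial_u\Sigma(x,\xi_1,1)$; dividing by $(u-1)(v-1)$ then yields only $\tilde\Gamma_k^{(d_1,d_2)}(x,\xi_1,\xi_2)=\mathcal{O}\bigl(L^k(|u-1|+|v-1|)/(|u-1|\,|v-1|)\bigr)$, which blows up as $(u,v)\to(1,1)$. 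Your way out, evaluating at $u=v=0$, only bounds $\tilde\Gamma$ at interior points $(\xi_1,\xi_2)=(1-\vartheta_1,1-\vartheta_2)\in(0,1)^2$; since $\tilde\Gamma$ has nonnegative coefficients it is monotone \emph{increasing} towards $(1,1)$, so a bound there (or at $(0,0)$, where $\tilde\gamma_k(x^i,0,0)$ retains only the coefficients with $a=b=1$) says nothing about $\tilde\Gamma_k^{(d_1,d_2)}(|x|,1,1)$, which is exactly the quantity that dominates $|\sum_{i\ge2}\tilde\gamma_k^{(d_1,d_2)}(x^i,u^i,v^i)|$ for $u,v\in\Xi_k$. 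This is precisely where the first-derivative argument of Corollary~\ref{cor:sumgamma} stops generalising: there the single divided difference $\Sigma(x,u)/(u-1)$ is uniformly $\mathcal{O}(L^k)$ and converges to the derivative at $1$, whereas the double divided difference is not uniformly bounded by the available estimate.

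To close the gap you should argue as the paper does for the other second-derivative quantities (Lemmas~\ref{l:g2}, \ref{l:g22} and \ref{lem:covrep}): differentiating the recursion \eqref{eq:rec_var} once in $u$ and once in $v$ gives $\tilde\gamma_{k+1}^{(d_1,d_2)}(x)=y(x)\bigl[\bigl(\sum_{i\ge1}\gamma_k^{(d_1)}(x^i)\bigr)\bigl(\sum_{i\ge1}\gamma_k^{(d_2)}(x^i)\bigr)+\sum_{i\ge1}i\,\tilde\gamma_k^{(d_1,d_2)}(x^i)\bigr]$ with $\tilde\gamma_0^{(d_1,d_2)}=0$, and an induction using $\gamma_k^{(d_j)}(x)=\mathcal{O}(y(x)^{k+d_j})$ from Lemma~\ref{l:g} yields $\tilde\gamma_k^{(d_1,d_2)}(x,u,v)=\mathcal{O}(k\,y(|x|)^{k})$ for $|x|\le\rho^2+\varepsilon$ and $|u|,|v|\le1$ (positivity of coefficients reduces this to $x>0$, $u=v=1$). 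Since $x\in\Theta$ forces $|x^i|\le(\rho+\eta)^2<\rho^2+\varepsilon$ for $i\ge2$, summation gives $\sum_{i\ge2}|\tilde\gamma_k^{(d_1,d_2)}(x^i,u^i,v^i)|=\mathcal{O}(k\,y(|x|^2)^k)=\mathcal{O}(L^k)$ for any $L$ with $y((\rho+\eta)^2)<L<1$. Alternatively one could strengthen the a priori lemma to a bound of order $|u-1|\,|v-1|\,L^k$ on the mixed difference $w_k(x,u,v)-w_k(x,u,1)-w_k(x,1,v)$, after which your divided-difference argument would go through, but that estimate is not established and your proposal does not supply it.
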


\section{The one dimensional case}\label{sec:4}

Our main goal is to prove the following theorem from where the main result follows by integration.

\begin{thm}\label{th:exp}
Let $x = \rho (1+\frac{s}{n})$, $u=e^{\frac{it}{\sqrt{n}}}$, $k=\lfloor{\kappa \sqrt{n}}\rfloor$ and $d$ be a fixed integer. Moreover, assume that $|\arg s| \geq \vartheta > 0$ and, as $n \to \infty$, we have $s=\mathcal{O}(\log^2 n)$, whereas $\kappa$ and $t$ are fixed. Then, $w_k^{(d)}(x,u)$ admits the local representation 

\begin{align} \label{eq:wkd}
w_k^{(d)}(x,u) \sim \frac{C_d \rho^d}{\sqrt{n}} \cdot \frac{it\sqrt{-s}e^{-\frac{1}{2}\kappa b
\sqrt{-\rho s}}}{\sqrt{-s}e^{\frac{1}{2}\kappa b \sqrt{-\rho s}} - \frac{itC_d\rho^{d}}{b\sqrt{\rho}}\sinh(\frac{1}{2}\kappa b \sqrt{-\rho s})}
\end{align}
\end{thm}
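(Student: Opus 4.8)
The plan is to study the recursion \eqref{eq:rec} after passing to the shifted functions $w_k^{(d)}$, and to track the joint scaling $x=\rho(1+s/n)$, $u=e^{it/\sqrt n}$, $k=\lfloor\kappa\sqrt n\rfloor$. First I would rewrite the second line of \eqref{eq:rec} as
\[
w_{k+1}^{(d)}(x,u)=y(x)\Bigl(\exp\bigl(w_k^{(d)}(x,u)+\Sigma_k^{(d)}(x,u)\bigr)-1\Bigr),
\]
isolating the contribution of the higher-order terms $\Sigma_k^{(d)}$, which by Corollary~\ref{cor:sigma} is $\Ord{|u-1|L^k}=\Ord{t L^k/\sqrt n}$ and hence uniformly negligible on the relevant scale (and summable in $k$). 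Expanding the exponential and using the local expansion \eqref{exp} of $y(x)$ near $\rho$, one is led to an approximate discrete dynamical system of Riccati type for $w_k:=w_k^{(d)}$, schematically $w_{k+1}\approx w_k - b\sqrt{\rho-x}\,w_k + \tfrac12 w_k^2 + (\text{small})$. The standard heuristic is that over $k\asymp\sqrt n$ steps with $\sqrt{\rho-x}\asymp 1/\sqrt n$ this discrete recursion is well approximated by the ODE $w'(\tau) = -\tfrac{b}{2}\sqrt{-\rho s}\,w(\tau)+\tfrac12 w(\tau)^2$ in the rescaled variable $\tau=k/\sqrt n$, whose solution is exactly the rational-in-$\sinh$ expression appearing on the right-hand side of \eqref{eq:wkd}, provided the initial condition is matched correctly.

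The second main ingredient is the initial condition. From the first line of \eqref{eq:rec} we have $w_0^{(d)}(x,u)=(u-1)x\,Z_{d-1}(y(x),y(x^2),\dots,y(x^{d-1}))$. With $u=e^{it/\sqrt n}$ we get $u-1\sim it/\sqrt n$, and evaluating the cycle index at $x=\rho(1+s/n)\to\rho$ gives $x\,Z_{d-1}(y(\rho),y(\rho^2),\dots)$, which up to the announced error is precisely $C_d\rho^d$; indeed writing $Z_{d-1}$ through the exponential formula and comparing with the definition of $C$ in Theorem~\ref{thm:1} yields $C_d=C+\Ord{d\rho^d}$. Thus $w_0^{(d)}(x,u)\sim \frac{it}{\sqrt n}C_d\rho^d$, which after the substitution $w=\frac{C_d\rho^d}{\sqrt n}W$ becomes $W_0\sim it$, matching the numerator of \eqref{eq:wkd} at $\kappa=0$ (where $\sinh=0$, so the whole expression collapses to $it$). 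Propagating $W_0=it$ through the Riccati ODE above gives exactly the claimed closed form. It is worth noting that the nonlinear term survives in the limit precisely because $w_k$ itself is of order $1/\sqrt n$ over a range of $\sqrt n$ levels, so $\sum_k w_k^2 \asymp 1$; this is what produces the $\sinh$ in the denominator rather than a pure exponential.

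The technical heart — and the step I expect to be the main obstacle — is making the ODE approximation rigorous and uniform: one must show that the accumulated error between the discrete iteration and the continuous solution is $o(1/\sqrt n)$ after multiplication by $\sqrt n$, uniformly for $s$ in the stated range $|\arg s|\ge\vartheta$, $s=\Ord{\log^2 n}$. This requires (i) a priori control of $w_k^{(d)}$ of the right order on the whole range of $k$, which is exactly what the bounds of Section~\ref{sec:3} (the lemma and Corollaries~\ref{cor:sigma}, \ref{cor:sumgamma}) are designed to provide; (ii) a Gronwall-type stability estimate for the Riccati recursion, using that $\mathrm{Re}\sqrt{-\rho s}>0$ off the positive real axis so the linear part is contracting and the denominator in \eqref{eq:wkd} stays bounded away from $0$; and (iii) controlling the replacement of $y(x)$ and of the cycle-index initial data by their limits, where the condition $s=\Ord{\log^2 n}$ ensures the correction terms from \eqref{exp} contribute only lower-order factors. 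The cleanest way to organize (i)--(ii) is to write $w_{k+1}=f(w_k)+\varepsilon_k$ with $f$ the exact one-step map and $\varepsilon_k$ collecting the $\Sigma_k$ and higher-order $y(x)$ contributions, bound $\sum_k|\varepsilon_k|$, and then compare the orbit of $f$ with the flow of its generating vector field step by step; once this is done, substituting the explicit scalings and simplifying yields \eqref{eq:wkd}. The subsequent passage from Theorem~\ref{th:exp} to Theorem~\ref{th:3} is then the routine extraction of the characteristic function \eqref{local_time_char_fun} by a Cauchy integral over the contour $\gamma$, together with singularity analysis in the sense of \cite{FO90}.
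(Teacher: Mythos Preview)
Your overall strategy---recognize the recursion as a discrete Riccati equation and pass to a continuum limit---is reasonable and close in spirit to the paper's approach, but there is a genuine gap in your treatment of the $\Sigma_k^{(d)}$ terms, and it leads to the wrong constant.

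You claim that $\Sigma_k^{(d)}(x,u)=\Ord{|u-1|L^k}$ is ``uniformly negligible on the relevant scale (and summable in $k$)''. The bound is correct, but negligibility does not follow: summing over $k\le\kappa\sqrt n$ gives a total contribution of order $|u-1|\asymp 1/\sqrt n$, which is \emph{exactly} the order of $w_k^{(d)}$ itself. Concretely, the linearized recursion $w_{k+1}\approx y(x)w_k+y(x)\Sigma_k$ yields $w_k\approx y^k w_0+\sum_{\ell<k}y^{k-\ell}\Sigma_\ell$, and with $y(x)\to 1$ the second sum is $(u-1)\sum_\ell\tilde C_\ell^{(d)}y(x^2)^{\ell+d}$, which by the identity $\tilde C_\ell^{(d)}=(C_{\ell+1}^{(d)}-C_\ell^{(d)})(y(x)/y(x^2))^{\ell+d}$ telescopes to $(u-1)(C_k^{(d)}-C_0^{(d)})y(x)^{k+d}$. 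This is of the same order as $y^kw_0=(u-1)C_0^{(d)}y(x)^{k+d}$ and cannot be dropped. Relatedly, your identification $\rho Z_{d-1}(y(\rho),y(\rho^2),\dots)=C_d\rho^d$ is false: the left-hand side is $C_0^{(d)}(\rho)$, whereas $C_d\rho^d=C^{(d)}(\rho)=\lim_k C_k^{(d)}(\rho)$, and since $C_{k+1}^{(d)}(\rho)=C_k^{(d)}(\rho)+\Gamma_k^{(d)}(\rho)$ with $\Gamma_k^{(d)}(\rho)>0$, one has $C_0^{(d)}(\rho)<C_d\rho^d$ strictly. Your ODE with initial datum $W_0=it$ therefore propagates the wrong constant.

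The paper handles this not by an ODE approximation but by linearizing the Riccati recursion exactly via the reciprocal: writing a recursion for $y(x)^k/w_k^{(d)}$ produces a sum that can be evaluated in closed form (Lemma~\ref{lem:wkd}), and the $\Sigma_\ell$-contribution becomes the sum $c_k^{(d)}=w_0^{(d)}\sum_\ell \Sigma_\ell y^{\ell+1}/(w_\ell w_{\ell+1})$, which telescopes precisely to $1-\dfrac{xZ_{d-1}}{y(x)^d C_k^{(d)}(x)}$. This is what replaces $C_0^{(d)}$ by $C_k^{(d)}\to C^{(d)}$ in the final formula. Your approach can be repaired---keep the $\Sigma$-source in the ODE and observe that, being concentrated near $\tau=0$, it effectively shifts the initial condition from $(u-1)C_0^{(d)}(\rho)$ to $(u-1)C^{(d)}(\rho)$---but this requires Lemma~\ref{l:g} (the representation $\gamma_k^{(d)}(x)=C_k^{(d)}(x)y(x)^{k+d}$ and the identification $C^{(d)}(\rho)=C_d\rho^d$) and the telescoping identity, i.e., essentially the same machinery the paper develops.
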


\subsection*{The one-dimensional limiting distribution}
Let us first assume that Theorem~\ref{th:exp} holds. Then, to prove Theorem \ref{th:3} in one dimension, we need to determine the characteristic function
\begin{align}\label{eq:phi}
\phi_{k,n}^{(d)}(t) &= \frac{1}{y_n} [x^n] y_k^{(d)}(x,e^{\frac{it}{\sqrt{n}}}) \nonumber\\
&=\frac{1}{2 \pi i y_n} \int_\Gamma y_k^{(d)}(x,e^{\frac{it}{\sqrt{n}}}) \frac{dx}{x^{n+1}}
\end{align}
where the contour $\Gamma = \gamma \cup \Gamma'$ consists of the line 

\begin{align*}
\gamma = \{x = \rho(1 - \frac{1+i\tau}{n}) | -D \log^2 n \leq \tau \leq D \log^2 n\}
\end{align*}
with an arbitrarily chosen constant $D > 0$ and $\Gamma'$ is a circular arc centered at the origin
and closing the curve. The contribution of $\Gamma'$ is exponentially small since for $x \in
\Gamma'$ we have $\frac{1}{y_n}|x^{-(n+1)}| \mc{O}(n^{\frac{3}{2}} e^{-\log^2 n})$ on the one hand whereas
on the other hand $|y_k^{(d)}(x,e^{\frac{it}{\sqrt{n}}})|$ is bounded. 

If $x \in \gamma$ the local expansion \eqref{eq:wkd} is valid and thus, inserting into \eqref{eq:phi} leads to:

\begin{align*}
\lim_{n\to\infty}\phi_{k,n}^{(d)}(t) &= \lim_{n\to\infty} 
\frac{1}{2 \pi i y_n} \bigg[\int_{\Gamma'} w_k^{(d)}(x,u) \frac{dx}{x^{n+1}} + \underbrace{\int_{\Gamma'} y(x) \frac{dx}{x^{n+1}}}_{=2 \pi i y_n}\bigg]\\
&= 1+\lim_{n\to\infty}\frac{C_d \rho^{d+n} n \sqrt{2}}{b \sqrt{2 \rho \pi}} \int\limits_{1-i
\log^2 n}^{1+i \log^2 n} \frac{t \sqrt{-s} e^{(-\frac{\kappa b \sqrt{-\rho
s}}{2})}}{e^{(\frac{\kappa b \sqrt{-\rho s}}{2})} - \frac{it C_d \rho^d}{\sqrt{\rho} b} \sinh (\frac{\kappa b \sqrt{-\rho s}}{2})} \frac{1}{\rho^n n} e^{-s} ds \\
&= \psi(t)
\end{align*} 
where $\psi(t)$ is the function given by \eqref{local_time_char_fun}.\\

Now let us turn back to the proof of Theorem~\ref{th:exp}.

\subsection{The local behaviour of $y_k^{(d)}(x,u)$ - refined analysis}

Now we will refine the \emph{a priori} estimates of the previous section. First we show that the
first derivate $\gamma_k^{(d)}(x,1)$ is almost a power of $y(x)$. Afterwars we will derive estimates
for the second derivative and then obtain a power-like representation for $w_k^{(k)}(x,u)$.
Finally, utilizing the recurrence relation for $w_k^{(k)}(x,u)$ we will arrive at the desired
result \eqref{eq:wkd}.

\begin{lem}\label{l:g}
For $x \in \Theta$ (where $\eta>0$ is sufficiently small) the functions $\gamma_k^{(d)}(x)$ can be represented as 
\[\gamma_k^{(d)}(x):=\gamma_k^{(d)}(x,1)=C_k^{(d)}(x)y(x)^{k+d},\]
where the functions $C_k^{(d)}(x)$ are analytic and converge uniformly to an analytic limit
function $C^{(d)}(x)$ (for $x\in\Theta$) with convergence rate \[C_k^{(d)}(x) = C^{(d)}(x) +
\mathcal{O}(L^{k})\] for some $0<L<1$, and further $C^{(d)}(\rho) = C_d \rho^d$, where $C_d$ is
the constant given in \eqref{mainres}.
\end{lem}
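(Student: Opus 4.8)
The plan is to proceed by induction on $k$, differentiating the recurrence \eqref{eq:rec} with respect to $u$ and then setting $u=1$. Since $y_k^{(d)}(x,1)=y(x)$ for all $k$, differentiating the relation $y_{k+1}^{(d)}(x,u)=x\exp\bigl(\sum_{i\ge1}y_k^{(d)}(x^i,u^i)/i\bigr)$ and evaluating at $u=1$ gives, by the chain rule,
\[
\gamma_{k+1}^{(d)}(x)=y(x)\Bigl(\gamma_k^{(d)}(x)+\sum_{i\ge2}\gamma_k^{(d)}(x^i)\Bigr),
\]
because the $i$-th summand contributes $\tfrac1i\cdot i\,u^{i-1}\partial_u y_k^{(d)}(x^i,u^i)$, which at $u=1$ is $\gamma_k^{(d)}(x^i)$, and the prefactor $x\exp(\dots)$ at $u=1$ equals $y(x)$. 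The base case is read off from \eqref{eq:rec}: $\gamma_0^{(d)}(x)=x Z_{d-1}(y(x),\dots,y(x^{d-1}))$, which by the a priori bound (the Lemma in Section~\ref{sec:3}, giving $|w_0^{(d)}|=\mathcal{O}(|u-1||x|^d)$) is $\mathcal{O}(|x|^d)$ near $\rho$; one sets $C_0^{(d)}(x):=\gamma_0^{(d)}(x)/y(x)^d$, which is analytic on $\Theta$ since $y(x)$ is non-zero there (as $y(\rho)=1$). Plugging the ansatz $\gamma_k^{(d)}(x)=C_k^{(d)}(x)y(x)^{k+d}$ into the recurrence yields
\[
C_{k+1}^{(d)}(x)=C_k^{(d)}(x)+\frac{1}{y(x)^{k+d}}\sum_{i\ge2}\gamma_k^{(d)}(x^i),
\]
so $C^{(d)}(x):=\lim_{k\to\infty}C_k^{(d)}(x)=C_0^{(d)}(x)+\sum_{k\ge0}y(x)^{-(k+d)}\sum_{i\ge2}\gamma_k^{(d)}(x^i)$ provided this series converges geometrically.

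The key estimate is therefore geometric decay of the correction term $y(x)^{-(k+d)}\sum_{i\ge2}\gamma_k^{(d)}(x^i)$. Here Corollary~\ref{cor:sumgamma} does the heavy lifting: it gives $\sum_{i\ge2}\gamma_k^{(d)}(x^i)=\mathcal{O}(L^k)$ uniformly for $x\in\Theta$ and $u\in\Xi_k$ (in particular at $u=1$). Since $y(x)$ is bounded away from $0$ on $\Theta$ — more precisely $|y(x)^{-(k+d)}|$ grows at most like $(1+c\eta)^{k}$ for some small $c$ when $|x|<\rho+\eta$ — and $L<1$ is fixed, choosing $\eta$ small enough makes the product $y(x)^{-(k+d)}\cdot\mathcal{O}(L^k)$ decay geometrically in $k$, say like $\tilde L^k$ with $\tilde L<1$. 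Summability of $\sum_k\tilde L^k$ then gives both the existence of the analytic limit $C^{(d)}(x)$ (uniform limit of analytic functions on $\Theta$, hence analytic) and the convergence rate $C_k^{(d)}(x)=C^{(d)}(x)+\mathcal{O}(\tilde L^k)$ by bounding the tail $\sum_{j\ge k}$ of the telescoping series.

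It remains to identify $C^{(d)}(\rho)=C_d\rho^d$. At $x=\rho$ we have $y(\rho)=1$, so $\gamma_k^{(d)}(\rho)=C_k^{(d)}(\rho)$ and the recurrence collapses to $C_{k+1}^{(d)}(\rho)=C_k^{(d)}(\rho)+\sum_{i\ge2}\gamma_k^{(d)}(\rho^i)$. Thus $C^{(d)}(\rho)=\gamma_0^{(d)}(\rho)+\sum_{k\ge0}\sum_{i\ge2}\gamma_k^{(d)}(\rho^i)$, and one recognizes this telescoping sum as $\lim_{k\to\infty}\gamma_k^{(d)}(\rho)=\partial_u\bigl[\lim_k y_k^{(d)}(\rho,u)\bigr]_{u=1}$; this limit is the generating function (in $x$, evaluated at $x=\rho$) for the expected number of nodes of degree $d$, whose value is known — from \cite{Da} or \cite{DrGi99}, compare the remark after Theorem~\ref{thm:1} identifying $\mu_d=\tfrac{2C_d}{b^2\rho}\rho^d$ with the asymptotic mean — to equal $C_d\rho^d$ with $C_d=C+\mathcal{O}(d\rho^d)$. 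Concretely, writing $\rho^i$ in place of $\rho$ and using $y(\rho^i)/\rho^i\to$ the relevant limit, the double sum $\gamma_0^{(d)}(\rho)+\sum_{k,i}\gamma_k^{(d)}(\rho^i)$ rearranges into $\rho^d\exp\bigl(\sum_{i\ge1}\tfrac1i(y(\rho^i)/\rho^i-1)\bigr)$ up to the stated $\mathcal{O}(d\rho^d)$ error coming from the degree-$d$ truncation in $Z_{d-1}$, which matches $C_d\rho^d$.

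The main obstacle is the uniformity of the geometric decay across the whole region $\Theta$, i.e.\ controlling the competition between the growth of $y(x)^{-(k+d)}$ for $|x|>\rho$ and the decay $L^k$ coming from Corollary~\ref{cor:sumgamma}; this is precisely why $\eta$ must be taken small (depending on $L$), and it is the only place where a careful quantitative argument — rather than bookkeeping — is needed. The analyticity and the identification of the limit at $\rho$ are then comparatively routine, the latter amounting to recognizing a known constant.
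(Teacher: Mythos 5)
Your overall architecture coincides with the paper's: the same recurrence $\gamma_{k+1}^{(d)}(x)=y(x)\bigl(\gamma_k^{(d)}(x)+\Gamma_k^{(d)}(x)\bigr)$ with $\Gamma_k^{(d)}(x)=\sum_{i\ge2}\gamma_k^{(d)}(x^i)$, the same definition $C_k^{(d)}(x)=\gamma_k^{(d)}(x)/y(x)^{k+d}$, and the same telescoping $C_{k+1}^{(d)}-C_k^{(d)}=\Gamma_k^{(d)}(x)/y(x)^{k+d}$. However, there is a genuine gap precisely at the step you yourself single out as the crux. You justify the geometric decay of the correction term by asserting that $y(x)$ is bounded away from $0$ on $\Theta$ and that $|y(x)|^{-(k+d)}\le(1+c\eta)^k$ there. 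This is false: $\Theta(\eta)$ is the whole slit disk $\{|z|<\rho+\eta,\ \arg(z-\rho)\neq0\}$, which contains points arbitrarily close to $0$, where $y(x)\sim x\to0$. For such $x$ the bound $|\Gamma_k^{(d)}(x)/y(x)^{k+d}|\le CL^k|y(x)|^{-(k+d)}$ obtained by combining Corollary~\ref{cor:sumgamma} with your estimate blows up rather than decays, so uniform convergence of $C_k^{(d)}$ on all of $\Theta$ (and hence analyticity of the limit there) is not established. The repair — which is what \cite[Lemma~3]{DrGi}, invoked by the paper, actually does — is to keep the correction in the form $\Gamma_k^{(d)}(x)/y(x)^{k+d}=\sum_{i\ge2}C_k^{(d)}(x^i)\bigl(y(x^i)/y(x)\bigr)^{k+d}$ and to show $\sup_{x\in\Theta,\,i\ge2}|y(x^i)/y(x)|\le q<1$, using $|y(x^i)|\le y(|x|^2)$ together with a lower bound $|y(x)|\ge c|x|$ coming from $y(x)=x\exp(\cdots)$; the crude absolute bound $\mathcal{O}(L^k)$ for $\Gamma_k^{(d)}$ discards exactly the cancellation that saves the day for small $|x|$.

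A secondary, milder point concerns the identification $C^{(d)}(\rho)=C_d\rho^d$. Your route (collapse the recurrence at $x=\rho$, recognize $\lim_k\gamma_k^{(d)}(\rho)$ as a known constant from \cite{Da}, \cite{DrGi99}) is legitimate in spirit but stays at the level of citation: the object $\lim_k\gamma_k^{(d)}(\rho)$ is not itself ``the generating function for the expected number of nodes of degree $d$''; it is the coefficient of the $1/(1-y(x))$ singularity of $D^{(d)}(x)=\sum_k\gamma_k^{(d)}(x)$, and passing between the two is an Abelian argument that needs the convergence rate you are in the middle of proving. The paper instead computes $C^{(d)}(\rho)=\lim_{x\to\rho}(1-y(x))D^{(d)}(x)/y(x)^d$ from the explicit functional equation for $D^{(d)}(x)$ and uses Schwenk's quantitative estimate for the cycle index $Z_{d-1}$; that last ingredient is what produces the error term $\mathcal{O}(d\rho^d)$ in $C_d$, which your ``rearranges into'' step does not account for. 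You should either reproduce that computation or make the appeal to the literature precise enough to carry the $d$-dependence of the constant.
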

\begin{proof}
We define the functions $C_k^{(d)}(x):=\frac{\gamma_k^{(d)}(x)}{y(x)^{k+d}}$.\\

We prove the analyticity of the functions $\gamma_k^{(d)}(x)$ by induction:
\[\gamma_0^{(d)}(x)=xZ_{d-1}(y(x),y(x^2),\ldots, y(x^{d-1}))=x\mathcal{O}(y(x)^{d-1})\] is analytic in $\Theta$ by previous arguments, and so is 
\begin{equation}\label{eq:C0}
C_0^{(d)}(x)=\frac{x\mathcal{O}(y(x)^{d-1})}{y(x)^d}=\mc{O}(1)
\end{equation}
The step of induction works like in \cite{DrGi}, as the $\gamma_k^{(d)}$ fulfill the same recursion as the $\gamma_k$: 
\begin{align}\label{eq:recgamma}
\gamma_{k+1}^{(d)}(x,u) &= \frac{\partial}{\partial u} xe^{\sum\limits_{i \geq 1} y_k^{(d)}(x^i,u^i)} \nonumber\\
&=x e^{\sum\limits_{i \geq 1}\frac{y_k^{(d)}(x^i,u^i)}{i}} \sum\limits_{i \geq 1}\frac{\partial}{\partial u}{y_k^{(d)}(x^i,u^i)}u^{i-1}\nonumber\\
&=y_{k+1}^{(d)}(x,u)\sum\limits_{i\geq 1} \gamma_k^{(d)}(x^i,u^i)u^{i-1},
\end{align}
and for $u=1$ \[\gamma_{k+1}^{(d)}(x)=y(x)\d{\gamma}(x) + y(x)\Gamma_k^{(d)}(x),\] with $\Gamma_k^{(d)}(x)=\sum\limits_{i\geq 2} \gamma_k^{(d)}(x^i)$, which is analytic for $|x|\leq\sqrt{\rho}$ and hence in $\Theta$. Applying the induction hypothesis, this proves the analyticity of $\d{\gamma}(x)$. 
Solving the recurrence, we obtain 
\begin{align*}\d{\gamma}=y(x)^{k}\gamma_0^{(d)}(x)+\sum_{\ell=0}^{k-1}y(x)^{k-\ell}\Gamma_{\ell}^{(d)}(x)
\end{align*}
and hence, the analyticity of $\d{\gamma}$ implies the analyticity of the functions $C_k^{(d)}(x)$ in $\Theta$.\\
We now have to show that the functions $(C_k^{(d)}(x))_{k \geq 0}$ have a uniform limit
$C^{(d)}(x)$ but this works analogously to \cite[Lemma~3]{DrGi}.  

Finally, note that 
\[
\sum_{k \geq 0}\gamma^{(d)}_k(x,1)=\sum_{k \geq 0} d_n^{(d)}x^n =
D^{(d)}(x),
\] 
where $d_n^{(d)}$ is the total number of vertices of degree $d$ in all trees of size
$n$, and $D^{(d)}(x)$ is the corresponding generating function, introduced in e.g. \cite{RoSch}. 
On the other hand, 
\[
\sum_{k \geq 0}\gamma^{(d)}_k(x,1)=\sum_{k\geq 0}(C^{(d)}(x)+\mathcal{O}(L^k))y(x)^{k}=\frac{C^{(d)}(x)y(x)^d}{1-y(x)}+\mathcal{O}(1)\] and therefore \[C^{(d)}(\rho) = \lim_{x \to \rho}\frac{(1-y(x))D^{(d)}(x)}{y(x)^d}.
\]
We know that 
$$
D^{(d)}(x)=\frac{y(x)\sum_{i\ge 2} D^{(d)}(x^i)+xZ_{d-1}(y(x),\dots,y(x^{d-1}))}{1-y(x)}
$$
(cf. \cite[Eq.~(36)]{RoSch} or \cite{Da}). Schwenk \cite[Lemma~4.1]{Sch77} computed the limit of the cycle index in the
numerator. In his proof he provides the speed of convergence as well. In fact,
\cite[Eq.~(32)]{Sch77} says that 
$$
\left|Z_d\(\frac{y(x)}x,\dots,\frac{y(x^d)}{x^d}\)-\exp\(\sum_{i=1}^d
\frac1i\(\frac{y(x)}x-1\)\)\right|\le x^{d+1} \exp\(\lambda \sum_{i=1}^d \frac 1i\)
$$
with $\lambda=\sup_{0\le x\le \rho} \frac 1x\(\frac{y(x)}x-1\)=\frac{1-\rho}{\rho^2}$. Thus 
$xZ_{d-1}(y(x),\dots,y(x^{d-1})) =x^d F(x) +\Ord{d x^{2d+1}}$. Note further that
$D^{(d)}(x)=\Ord{x^{d+1}}$ since there are no nodes of degree $d$ in trees of size less than
$d+1$. This implies $C^{(d)}(\rho) = C_d\rho^d$ 
with $C_d=C+\Ord{d\rho^d}$ and $C$ as in Theorem~\ref{thm:1}.
\end{proof}

\begin{lem}\label{lem:gamma}
There exist constants $\epsilon^{(d)}, \theta^{(d)}, \tilde{\eta}^{(d)}>0$ and $\theta^{(d)}<\frac{\pi}{2}$ such that \[|\gamma_k^{(d)}(x,u)|=\mathcal{O}(|y(x)|^{k+d})\] uniformly for $x \in \Delta_\varepsilon$ and $u \in \Xi_k$.
\end{lem}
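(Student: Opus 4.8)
The plan is to extend the representation of Lemma~\ref{l:g} from $u=1$ to the whole region $u\in\Xi_k$ and, simultaneously, to extend the domain in $x$ from $\Theta$ (which excludes a neighbourhood of $\rho$ only in the angular sense but still insists $|x|<\rho+\eta$) to $\Delta_\varepsilon$, where $x$ may approach $\rho$ from the cut plane. The key structural fact is the recursion \eqref{eq:recgamma},
\[
\gamma_{k+1}^{(d)}(x,u)=y_{k+1}^{(d)}(x,u)\Bigl(\gamma_k^{(d)}(x,u)+\Gamma_k^{(d)}(x,u)\Bigr),
\qquad \Gamma_k^{(d)}(x,u)=\sum_{i\ge 2}\gamma_k^{(d)}(x^i,u^i)u^{i-1},
\]
together with the a priori bound $\Gamma_k^{(d)}(x,u)=\mathcal{O}(L^k)$ from Corollary~\ref{cor:sumgamma} (valid on $\Theta\times\Xi_k$, and by the same monotone-coefficient argument on $\Delta_\varepsilon\times\Xi_k$ after shrinking $\varepsilon$). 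Iterating the recursion yields
\[
\gamma_k^{(d)}(x,u)=\gamma_0^{(d)}(x,u)\prod_{j=1}^{k}y_j^{(d)}(x,u)
+\sum_{\ell=0}^{k-1}\Gamma_\ell^{(d)}(x,u)\prod_{j=\ell+1}^{k}y_j^{(d)}(x,u).
\]
Since $\gamma_0^{(d)}(x,u)=xZ_{d-1}(\dots)=\mathcal{O}(|y(x)|^{d-1})$ uniformly, it remains to control the products $\prod_{j}y_j^{(d)}(x,u)$.

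First I would establish that, on $\Delta_\varepsilon\times\Xi_k$, each factor satisfies $|y_j^{(d)}(x,u)|\le |y(x)|(1+\delta_j)$ with $\sum_j\delta_j$ bounded: indeed $y_j^{(d)}(x,u)=y(x)+w_j^{(d)}(x,u)$ and, by the a priori Lemma, $|w_j^{(d)}(x,u)|\le D|u-1|\,|x|^dL^j$; on $\Xi_k$ we have $k|u-1|\le\tilde\eta$, so $|u-1|L^j$ is summable in $j$ uniformly in $k$, and on $\Delta_\varepsilon$ we have $|y(x)|$ bounded below by a positive constant. Hence
\[
\Bigl|\prod_{j=\ell+1}^{k}y_j^{(d)}(x,u)\Bigr|
\le |y(x)|^{k-\ell}\prod_{j\ge 1}(1+C|u-1|L^j)
= \mathcal{O}\bigl(|y(x)|^{k-\ell}\bigr),
\]
uniformly. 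Plugging this into the iterated formula, the first term contributes $\mathcal{O}(|y(x)|^{d-1}\cdot|y(x)|^{k})=\mathcal{O}(|y(x)|^{k+d})$ once one notices $|y(x)|$ is bounded above on $\Delta_\varepsilon$ (so the missing single power is harmless up to a constant), and the sum contributes $\sum_{\ell}\mathcal{O}(L^\ell)\mathcal{O}(|y(x)|^{k-\ell})=\mathcal{O}(|y(x)|^{k})$ because $L<1$ while $|y(x)|$ is bounded below on $\Delta_\varepsilon$, so the geometric series in $L/|y(x)|$ converges. This gives $\gamma_k^{(d)}(x,u)=\mathcal{O}(|y(x)|^{k+d})$ as claimed, with the constants $\epsilon^{(d)},\theta^{(d)},\tilde\eta^{(d)}$ emerging from the various smallness requirements (shrinking $\varepsilon$ so the monotonicity/coefficient-positivity arguments survive, choosing $\theta^{(d)}$ so that $\Delta_\varepsilon$ stays inside the domain of analyticity of all the $\gamma_j^{(d)}$, and choosing $\tilde\eta^{(d)}$ so that $\sum_j C|u-1|L^j$ is, say, $\le 1$).

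The main obstacle is the uniformity of the two competing asymptotic regimes near $x=\rho$: there $|y(x)|\to 1$, so the factor $|y(x)|^{k+d}$ is of order $1$ and provides no decay, yet the remainder terms carry $L^\ell$ with $L<1$ — one must check that the bound $\prod(1+C|u-1|L^j)=\mathcal{O}(1)$ does not degrade as $x\to\rho$ (it does not, since it is independent of $x$) and that the $\ell$-sum $\sum_\ell L^\ell|y(x)|^{k-\ell}$ is $\mathcal{O}(|y(x)|^k\sum_\ell (L/|y(x)|)^\ell)$ with $L/|y(x)|$ bounded away from $1$ uniformly on $\Delta_\varepsilon$ — which forces $\varepsilon$ small enough that $|y(x)|>L$ throughout $\Delta_\varepsilon$. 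Verifying this last inequality, and checking that the extension of Corollary~\ref{cor:sumgamma} to $\Delta_\varepsilon$ via the positive-coefficient domination still works when $x$ is allowed modulus slightly above $\rho$, are the only genuinely delicate points; the rest is the bookkeeping of telescoping the recursion \eqref{eq:recgamma}.
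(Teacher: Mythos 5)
There is a genuine gap, and it sits exactly at the point you flag as routine. Your telescoping of \eqref{eq:recgamma} and the reduction to bounding $\prod_{j}y_j^{(d)}(x,u)$ is fine (and is in spirit what the paper does), but the estimate you use for the factors, namely $|w_j^{(d)}(x,u)|\le D|u-1|\,|x|^dL^j$, is \emph{not} available on $\Delta_\varepsilon$. The a priori lemma that provides it is stated only for $|x|\le\rho^2+\varepsilon$, i.e.\ well inside the disk of radius $\rho$ (recall $\rho\approx 0.338$, $\rho^2\approx 0.114$), and the geometric decay $L^j$ genuinely fails near $\rho$: by the mean value theorem $w_j^{(d)}(x,u)=\gamma_j^{(d)}(x,1+\vartheta(u-1))(u-1)$, and for real $x$ close to $\rho$ this is of order $|u-1|\,y(x)^{j+d}$, which does not decay geometrically in $j$ because $y(x)\to 1$. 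Consequently your claim that $\sum_j\delta_j$ is summable ``uniformly in $k$'' because $|u-1|L^j$ is summable has no basis on $\Delta_\varepsilon$; the correct statement is $\sum_{j\le k}\delta_j=\mathcal{O}(|u-1|\cdot k)=\mathcal{O}(\tilde\eta)$, which uses $u\in\Xi_k$ in an essential way and, more importantly, uses a bound on $w_j^{(d)}$ in terms of $\gamma_j^{(d)}$ itself — the very quantity the lemma is about.

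This circularity is why the paper does not telescope and then bound the product in one shot, but instead runs a simultaneous induction: it sets $\bar C_\ell^{(d)}=\sup|\gamma_\ell^{(d)}(x,u)/y(x)^{\ell+d}|$, derives $|y_{\ell+1}^{(d)}(x,u)|\le|y(x)|\exp(\bar C_\ell^{(d)}|u-1|+\mathcal{O}(L^\ell))$ from the mean value form of $w_\ell^{(d)}$, feeds this into the recursion to get $\bar C_{\ell+1}^{(d)}\le\bar C_\ell^{(d)}e^{\bar C_\ell^{(d)}\tilde\eta/k}(1+\mathcal{O}(L^\ell))$, and then chooses $\tilde\eta$ so small that the $k$ accumulated exponential factors multiply to at most $2$. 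Your secondary points — the extension of Corollary~\ref{cor:sumgamma} to $\Delta_\varepsilon$ (harmless, since $|x^i|<\rho^2+\varepsilon$ for $i\ge2$), the requirement $|y(x)|>L$ on $\Delta_\varepsilon$, and the boundedness of $|y(x)|$ above and below — are all correct. To repair the proof you must replace the invalid $L^j$ bound on $w_j^{(d)}$ by the bootstrap $|w_j^{(d)}(x,u)|\le\bar C_j^{(d)}|u-1|\,|y(x)|^{j+d}$ and carry $\bar C_j^{(d)}$ through the induction as the paper does.
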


\begin{proof}
 For $l \leq k$ we set
\begin{align*}
 \bar{C}_l^{(d)} = \sup_{\substack{x\in\Delta_\varepsilon\\u \in \Xi_k}}\left|\frac{\gamma_\ell^{(d)}(x,u)}{y(x)^{\ell+d}}\right|.
\end{align*}

First we derive the following inequality, using the recurrence for $y_k^{(d)}$:
\begin{align*}
|y_{\ell+1}^{(d)}(x,u)| &=\left|x\exp\left(\sum_{i\geq1}\frac{1}{i}y_\ell^{(d)}(x^i,u^i)-y(x^i)+y(x^i)\right)\right|\\
&=\left|y(x)\exp\left(\sum_{i\geq1}\frac{1}{i}w_\ell^{(d)}(x^i,u^i)\right)\right|\\
&\leq |y(x)|\exp\left(|w_\ell^{(d)}(x,u)|+\sum_{i\geq2}\frac{1}{i}|w_\ell^{(d)}(x^i,u^i)|\right)\\
&\leq|y(x)|\exp\left(\underbrace{|\gamma_\ell^{(d)}(x,1+\vartheta(u-1))||u-1|}_{\leq\bar{C}_\ell^{(d)}|u-1|} +\sum_{i\geq2}\frac{|u^i-1|}{i}|\gamma_\ell^{(d)}(x^i,1+\vartheta(u^i-1))|\right)
\end{align*}
with $0<\vartheta<1$ and thus $1+\vartheta(u^i-1) \in \Xi_k$. To get an estimate for the second term, we use that $|u^i-1|=|1+u+\cdots+u^{i-1}||u-1|\leq i|u-1|$ as $|u|\leq 1$ and hence $\frac{u^i-1}{i}\leq|u-1|\leq 2$. Further we use $|\gamma_\ell^{(d)}(x^i,1+\vartheta(u^i-1))|\leq |\gamma_\ell^{(d)}(x^i,1)|$, $|y(x)| \leq 1$ and Corollary \ref{cor:sumgamma} to obtain \[|y_{\ell+1}^{(d)}(x,u)|\leq |y(x)|\exp\left(\bar{C}_\ell^{(d)}|u-1|+\mathcal{O}(L^\ell)\right)\]

Using recurrence \eqref{eq:recgamma} leads to:
\begin{align} \label{ineq:Cl}
 \bar{C}_{\ell+1}^{(d)} &=\sup_{\substack{x\in\Delta_\varepsilon\\u \in \Xi_k}}\left|\frac{y_{\ell+1}^{(d)}(x,u)}{y(x)}\right|\left|\frac{\gamma_\ell^{(d)}(x,u)+\sum_{i\geq2}\gamma_\ell^{(d)}(x^i,u^i)u^{i-1}}{y(x)^{\ell+d}}\right|\nonumber\\
&\leq e^{\bar{C}_\ell^{(d)}\frac{\eta}{k}+\mathcal{O}(L^\ell)}(\bar{C}_l^{(d)}+\mathcal{O}(L^l))\nonumber\\
&=\bar{C}_l^{(d)}e^{\bar{C}_\ell^{(d)}\frac{\eta}{k}}(1+\mathcal{O}(L^{\ell})),
\end{align}
where we used Lemma \ref{l:g} to get $|\sum_{i\geq2}\gamma_\ell^{(d)}(x^i,u^i)u^{i-1}|=\mc{O}(\sum_{i\geq2}|y(x^i)|^{\ell+d})$ and hence
\[\sup_{\substack{x\in\Delta_\varepsilon\\u \in \Xi_k}}\left|\frac{\sum_{i\geq2}\gamma_\ell^{(d)}(x^i,u^i)u^{i-1}}{y(x)^{\ell+d}}\right|=\sup\left(\mc{O}\left(\sum_{i\geq2}\left(\frac{y(x^i)}{y(x)}\right)^{\ell+d}\right)\right)=\mc{O}(L^{\ell+d})=\mc{O}(L^{\ell}).\]
We now set
\begin{align*}
 c_0 = \prod\limits_{j \geq 0}(1+\mathcal{O}(L^j)).
\end{align*}

Note that 
\[|\frac{\gamma_0^{(d)}(x,u)}{y(x)^d}| =| \frac{x Z_{d-1}(y(x),y(x^2),\ldots y(x)^{d-1})}{y(x)^d}| = |\mathcal{O}(\frac{x}{y(x)})|=\mathcal{O}(1),\] hence $\bar{C}_0^{(d)}= \sup|\frac{\gamma_0^{(d)}(x,u)}{y(x)^d}|=\mathcal{O}(1)$, too. Thus we can choose $\eta>0$ such that $e^{2 \bar{C}_0^{(d)} c_0 \eta} \leq 2$. For fixed $k$ we get:
\begin{align*}
\bar{C}_l^{(d)} \leq \bar{C}_0^{(d)} \prod\limits_{j < l}(1+\mathcal{O}(L^{j})) e^{2 \bar{C}_0^{(d)} c_0 c \frac{\ell}{k}}\leq 2\bar{C}_0c_0=\mathcal{O}(1).
\end{align*}
The second estimate is clear by the choice of $\eta$ and by $l \leq k$. The first inequality can be obtained from \eqref{ineq:Cl} by induction:
\begin{align*}
 \bar{C}_1^{(d)} &\leq \bar{C}_0^{(d)} (1+\mathcal{O}(L^0)) e^{\bar{C}_0^{(d)}\frac{\eta}{k}} \leq \bar{C}_0^{(d)}\prod\limits_{j < 1}(1+\mathcal{O}(L^{j})) e^{2 \bar{C}_0^{(d)} c_0  \eta\frac{1}{k}}\\
 \bar{C}_{\ell+1}^{(d)} &\leq \bar{C}_\ell^{(d)} e^{\frac{\eta}{k}C_\ell^{(d)}}(1+\mathcal{O}(L^l))\\
&=\prod\limits_{j<\ell}(1+\mathcal{O}(L^j))(1+\mathcal{O}(L^\ell))\bar{C}_0^{(d)} e^{2\bar{C}_0^{(d)}c_0\eta \frac{\ell}{k}}\exp\left(\frac{\eta}{k}\bar{C}_0^{(d)}\Pi_{j<\ell}\underbrace{e^{2\bar{C_0}^{(d)}c_0\eta\frac{\ell}{k}}}_{\leq 2^{\frac{\ell}{k}}\leq 2}\right)\\
&\leq \bar{C}_0 \prod\limits_{j<l+1}(1+\mathcal{O}(L^{j})) e^{2 \bar{C}_0 c_0 \eta \frac{l+1}{k}})
\end{align*}
\end{proof}

For the second derivatives with respect to $u$ of $y_k^{(d)}(x,u)$, $\gamma_k^{(d)[2]}(x,u)$, we find that
\begin{lem}\label{l:g2}
 Suppose that $|x| \leq \rho-\eta$ for some $\eta >0$ and $|u| \leq 1$. Then
\begin{align}
 \gamma_k^{(d)[2]}(x,u) = \mathcal{O}(y(|x|)^{k+d})
\end{align}
uniformly. There also exist constants $\epsilon, \theta, \tilde{\eta}$ such that uniformly for $u \in \Xi_k$ and $x\in\Delta_\epsilon$
\begin{align}
 \gamma_k^{(d)[2]}(x,u) = \mathcal{O}(k y(|x|)^{k+d})
\end{align}
\end{lem}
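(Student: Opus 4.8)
The plan is to imitate the treatment of the first derivative $\gamma_k^{(d)}$ in Lemma~\ref{lem:gamma}, but now for $\gamma_k^{(d)[2]}=\partial_u^2 y_k^{(d)}$, keeping track of the extra linear factor in $k$ that appears because the second derivative of a composition produces a ``square'' term. First I would differentiate the recurrence \eqref{eq:recgamma} once more in $u$. Since $\gamma_{k+1}^{(d)}(x,u)=y_{k+1}^{(d)}(x,u)\sum_{i\ge1}\gamma_k^{(d)}(x^i,u^i)u^{i-1}$, another differentiation gives, schematically,
\begin{align*}
\gamma_{k+1}^{(d)[2]}(x,u)
&= y_{k+1}^{(d)}(x,u)\Bigl(\sum_{i\ge1}\gamma_k^{(d)}(x^i,u^i)u^{i-1}\Bigr)^2
 + y_{k+1}^{(d)}(x,u)\sum_{i\ge1} i\,\gamma_k^{(d)[2]}(x^i,u^i)u^{2i-2}\\
&\qquad + y_{k+1}^{(d)}(x,u)\sum_{i\ge1}(i-1)\gamma_k^{(d)}(x^i,u^i)u^{i-2}.
\end{align*}
For the real case $|x|\le\rho-\eta$, $|u|\le1$ I would use $|y_{k+1}^{(d)}|\le1$, the a~priori bound $\gamma_k^{(d)}(x,1)=C_k^{(d)}(x)y(x)^{k+d}=\Ord{y(|x|)^{k+d}}$ from Lemma~\ref{l:g}, and Corollary~\ref{cor:sumgamma} to control the $i\ge2$ tails, so that the first (``square'') term on the right is $\Ord{y(|x|)^{k+d}}$ as well; defining $\bar C_\ell^{[2]}=\sup|\gamma_\ell^{(d)[2]}(x,u)/y(x)^{\ell+d}|$ the recurrence becomes $\bar C_{\ell+1}^{[2]}\le \bar C_\ell^{[2]}(1+\Ord{L^\ell})+\Ord{1}$, whose solution is $\bar C_k^{[2]}=\Ord{1}$ uniformly, giving the first assertion.

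For the second assertion I would work in $\Delta_\varepsilon$ with $u\in\Xi_k$, which is the point where the extra factor $k$ enters. Here the dominant term is still the ``square'' term $y_{k+1}^{(d)}(x,u)\bigl(\gamma_k^{(d)}(x,u)+\sum_{i\ge2}\cdots\bigr)^2$. By Lemma~\ref{lem:gamma} we have $\gamma_k^{(d)}(x,u)=\Ord{|y(x)|^{k+d}}$ on this range, so this term is $\Ord{|y(x)|^{2k+2d}}=\Ord{|y(x)|^{k+d}}\cdot\Ord{|y(x)|^{k+d}}$, which is again only $\Ord{|y(x)|^{k+d}}$ and not yet the source of the $k$. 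The factor $k$ instead comes from iterating the recurrence $\gamma_{k+1}^{(d)[2]}=y_{k+1}^{(d)}\gamma_k^{(d)[2]}+(\text{inhomogeneity of order }|y(x)|^{k+d})$: solving it as in Lemma~\ref{l:g} yields $\gamma_k^{(d)[2]}(x,u)=\sum_{\ell=0}^{k-1}\bigl(\prod_{j>\ell}y_{j}^{(d)}\bigr)\cdot\Ord{|y(x)|^{\ell+d}}$, and since on $\Delta_\varepsilon$ with $u\in\Xi_k$ each factor $|y_j^{(d)}(x,u)|$ is $|y(x)|(1+\Ord{\eta/k+L^j})$ by the estimate derived in the proof of Lemma~\ref{lem:gamma}, each of the $k$ summands contributes $\Ord{|y(x)|^{k+d}}$, and the sum is $\Ord{k\,|y(x)|^{k+d}}$. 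One must check that replacing $|y(x)|$ by $y(|x|)$ in the statement is harmless, using $|y(x)|\le y(|x|)$ for $x\in\Delta_\varepsilon$.

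The main obstacle is the bookkeeping on $\Delta_\varepsilon$: one has to be careful that the product $\prod_{\ell<j\le k}|y_j^{(d)}(x,u)|$ stays bounded by a constant times $|y(x)|^{k-\ell}$ uniformly, which requires re-using the inequality $|y_{j+1}^{(d)}(x,u)|\le|y(x)|\exp(\bar C_j^{(d)}\eta/k+\Ord{L^j})$ from the proof of Lemma~\ref{lem:gamma} together with $\bar C_j^{(d)}=\Ord{1}$ and $\sum_{j\ge0}L^j<\infty$, so that the product of the correction factors is $\exp(\Ord{1})=\Ord{1}$; and one has to verify that the ``square'' inhomogeneous term, the ``$(i-1)$'' term, and the $i\ge2$ tails (bounded via $|y(x^i)/y(x)|^{\ell+d}$ as in Lemma~\ref{lem:gamma}) are all genuinely $\Ord{|y(x)|^{\ell+d}}$ and do not themselves carry a hidden factor $k$. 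Once these uniform bounds are in place, summing the geometric-type series gives the claimed $\Ord{k\,y(|x|)^{k+d}}$.
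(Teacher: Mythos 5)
Your overall strategy is the same as the paper's: differentiate the recurrence \eqref{eq:recgamma} once more, normalize by $y(x)^{\ell+d}$, and iterate; your recurrence for $\gamma_{k+1}^{(d)[2]}$ is exactly the one the paper uses, and your treatment of the second assertion (the inhomogeneity is $\Ord{1}$ per step after normalization, the $k$ steps accumulate to $\Ord{k}$, and the products $\prod_j |y_j^{(d)}(x,u)|$ are controlled by $|y(x)|^{k-\ell}\exp(\Ord{1})$ via the estimate from Lemma~\ref{lem:gamma}) matches the paper's argument with $\bar D_{\ell+1}^{(d)}\le\alpha_\ell^{(d)}\bar D_\ell^{(d)}+\beta_\ell^{(d)}$.

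However, your first part contains a genuine error. You write the normalized recurrence as $\bar C_{\ell+1}^{[2]}\le\bar C_\ell^{[2]}(1+\Ord{L^\ell})+\Ord{1}$ and assert that its solution is $\Ord{1}$. It is not: an inhomogeneous term that is merely $\Ord{1}$ accumulates over $k$ iterations to $\Ord{k}$ (this is precisely what happens in the second assertion, and is why that bound carries the factor $k$). To get a bounded solution you must show that the normalized inhomogeneity is \emph{summable} in $\ell$. This is where the hypothesis $|x|\le\rho-\eta$ with $\eta>0$ is essential and cannot be treated as a cosmetic restriction: after dividing the square term $y_{\ell+1}^{(d)}\bigl(\sum_i\gamma_\ell^{(d)}(x^i,u^i)u^{i-1}\bigr)^2$ by $y(x)^{\ell+d+1}$ one is left with $C_\ell^{(d)2}y(x)^{\ell+d}+\Ord{L^\ell}\le C_\ell^{(d)2}y(\rho-\eta)^{\ell+d}+\Ord{L^\ell}$, which decays geometrically because $y(\rho-\eta)<1$ strictly. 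With this the recurrence reads $D_{\ell+1}^{(d)}\le D_\ell^{(d)}(1+\Ord{L^\ell})+\Ord{y(\rho-\eta)^{\ell+d}}+\Ord{L^\ell}$, whose solution is indeed $\Ord{1}$. As written, your argument does not distinguish the two regimes and would yield only $\Ord{k\,y(|x|)^{k+d})}$ in the first case as well.
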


\begin{proof}
 Derivation of \eqref{eq:recgamma} leads to the recurrence
\begin{align*}
 \gamma_{k+1}^{(d)[2]}(x,u)&= y_{k+1}^{(d)}(x,u) \left(\sum\limits_{i\geq1} \gamma_k^{(d)}(x^i,u^i)u^{i-1}\right)^2 \\
&+y_{k+1}^{(d)}(x,u)\sum\limits_{i\geq1}i \gamma_k^{(d)[2]}(x^i,u^i)u^{2(i-1)} \\
&+y_{k+1}^{(d)}(x,u)\sum\limits_{i\geq2}(i-1) \gamma_k^{(d)}(x^i,u^i)u^{i-2}
\end{align*}
with initial condition $\gamma_0^{(d)[2]}(x,u)=0$.

For $|x|<\rho-\eta$, for some $\eta>0$ and for $|u|\leq1$ we have $|\gamma_{k}^{(d)[2]}(x,u)| \leq  \gamma_{k}^{(d)[2]}(|x|,1)$. Thus, in this case we can restrict ourselves to non-negative real $x \leq \rho - \eta$. 

By using the bounds $\gamma_{k}^{(d)}(x,1) \leq C^{(d)}_k y(x)^{k+d}$ from Lemma \ref{lem:gamma}, $\sum_{i \geq 2}\gamma_k^{(d)}(x^i,u^i) = \mathcal{O}(L^k)$ from Corollary \ref{cor:sumgamma} and the induction hypothesis $\gamma_{k}^{(d)[2]}(x,1) \leq D_k^{(d)} y(x)^{k+d}$, we can derive the following upper bound from the above:

\begin{align*}
 \gamma_{k+1}^{(d)[2]}(x) &= y(x) \left(\sum\limits_{i \geq 1}  \gamma_{k}^{(d)}(x^i)\right)^2+y(x)\sum\limits_{i \geq1} i \gamma_{k}^{(d)[2]}(x^i) +y(x) \sum\limits_{i \geq2}(i-1) \gamma_{k}^{(d)}(x^i)\\
&\leq y(x)\left[\left(C_k^{(d)}y(x)^{k+d}+\mathcal{O}(L^k)\right)^2 + D_k^{(d)}(\sum_{i\geq1}iy(x^i)^{k+d})+\sum_{i\geq2}C_k^{(d)}y(x^i)^{k+d})\right]\\
&\leq y(x)^{k+d+1} \left[C_k^{(d)2}y(x)^{k+d} + C_k^{(d)}\mc{O}(L^k)\right.\\
&\hspace{3cm} \left.+ D_k\left(1+\sum\limits_{i \geq2} i \frac{y(x^i)^{k+d}}{y(x)^{k+d}}\right)+ C \sum\limits_{i \geq 2}(i-1) \frac{y(x^i)^{k+d}}{y(x)^{k+d}}\right]\\
& \leq y(x)^{k+d+1} (C^{(d)2}_k y(\rho-\eta)^{k+d} +D_k(1+ \mathcal{O}(L^k)) + \mathcal{O}(L^k)).
\end{align*}

Consequently

\begin{align*}
 D_{k+1}^{(d)} = (C_k^{(d)2}y(\rho-\eta)^{k+d} + D_k(1+ \mathcal{O}(L^k)) + \mathcal{O}(L^k))
\end{align*}

which leads to $D_k^{(d)} = \mathcal{O}(1)$ as $k \to  \infty$.\\

To prove the second property we use the same constants $\epsilon, \theta, \tilde{\eta}$ as in Lemma \ref{lem:gamma} and set: 
\begin{align*}
 \bar{D}_\ell^{(d)} = \sup_{\substack{x\in\Delta_\varepsilon\\u \in \Xi_k}}|\frac{\gamma_\ell^{(d)[2]}(x,u)}{y(x)^{\ell+d}}|,
\end{align*}

We use the already known bound $|\gamma_\ell^{(d)}(x,u)| \leq \bar{C}^{(d)}|y(x)^{k+d}|$ and by similar considerations as in the proof of Lemma \ref{lem:gamma} we get:

\begin{align*}
\bar{D}_{\ell+1}^{(d)} &= \sup_{\substack{x\in\Delta_\varepsilon\\u \in \Xi_k}}\left|\frac{y_{\ell+1}^{(d)}(x,u)}{y(x)}\right|\\
&\quad \times \left|\frac{\left(\sum\limits_{i\geq1} \gamma_\ell^{(d)}(x^i,u^i)u^{i-1}\right)^2 + \sum\limits_{i\geq1}i \gamma_\ell^{(d)[2]}(x^i,u^i)u^{2(i-1)} + \sum\limits_{i\geq2}(i-1) \gamma_\ell^{(d)}(x^i,u^i)u^{i-2}}{y(x)^{\ell+d}}\right|\\
&\leq \bar{D}_\ell^{(d)} e^{\bar{C}^{(d)}\frac{\eta}{k}}(1+ \mathcal{O}(L^\ell)) + C^{(d)2} e^{\bar{C}^{(d)}\frac{\eta}{k}} + \mathcal{O}(L^\ell))\\
&\leq \alpha_\ell^{(d)}\bar{D}_\ell^{(d)} + \beta_\ell^{(d)}
\end{align*}

with $\alpha_\ell^{(d)} = e^{\bar{C}^{(d)}\frac{\eta}{k}}(1+ \mathcal{O}(L^\ell))$ and $ \beta_\ell^{(d)}=C^{(d)2} e^{\bar{C}^{(d)}\frac{\eta}{k}} + \mathcal{O}(L^\ell))$.  Thus

\begin{align*}
 \bar{D}_{k}^{(d)} &\leq \alpha_{k-1}^{(d)}(\alpha_{k-2}^{(d)}(\ldots (\alpha_0^{(d)}D_0 + \beta_0^{(d)}) \ldots)\beta_{k-2}^{(d)})+ \beta_{k-1}^{(d)}\\
&= \sum\limits_{j=0}^{k-1}\beta_j^{(d)} \prod\limits_{i=j+1}^{k-1}\alpha_i^{(d)}+\alpha_0^{(d)}\bar{D}_0^{(d)}\\
&\leq k \max\limits_j \beta_j^{(d)}e^{\bar{C}^{(d)}c}\prod\limits_{i \geq 0} (1+\mathcal{O}(L^i))\\
&=\mathcal{O}(k),
\end{align*}

which completes the proof of the Lemma.
\end{proof}

\begin{lem}\label{l:wkc}
Let $\epsilon, \theta, \tilde{\eta}$ and $C_k^{(d)}(x)$ be as in Lemma \ref{l:g} and Lemma \ref{lem:gamma}. Then 
\begin{align}\label{eq:wkgk}
 w_k^{(d)}(x,u) = C_k^{(d)}(x)(u-1)y(x)^{k+d}(1+\mathcal{O}(k|u-1|))
\end{align}

uniformly for $x \in \Delta_\epsilon$ and $u\in \Xi_k$.
Furthermore we have for $|x| \leq \rho + \eta$ and $|u| \leq 1$
\begin{align}\label{eq:skgk}
 \Sigma_k^{(d)}(x,u) = \tilde{C}_k^{(d)}(x) (u-1)y(x^2)^{k+d} + \mathcal{O}(|u-1|^2y(|x|^2)^{k+d}),
\end{align}
 
where the analytic functions $ \tilde{C}_k^{(d)}(x)$ are given by 

\begin{align*}
  \tilde{C}_k^{(d)}(x) = \sum\limits_{i \geq 2}  C_k^{(d)}(x^i)\left(\frac{y(x^i)}{y(x^2)}\right)^{k+d}
\end{align*}

and have a uniform limit $ \tilde{C}^{(d)}(x)$ with convergence rate
\begin{align*}
  \tilde{C}_k^{(d)}(x)= \tilde{C}^{(d)}(x) + \mathcal{O}(L^k)
\end{align*}

for some constant $L$ with $0<L<1$. 
\end{lem}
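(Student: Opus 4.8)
\textbf{Proof plan for Lemma~\ref{l:wkc}.}
The plan is to derive the power-like expansion \eqref{eq:wkgk} for $w_k^{(d)}$ by a Taylor expansion in $u$ around $u=1$, using the derivative estimates already established. Writing $w_k^{(d)}(x,u) = \gamma_k^{(d)}(x,1)(u-1) + \tfrac12\gamma_k^{(d)[2]}(x,1+\vartheta(u-1))(u-1)^2$ for some $\vartheta\in(0,1)$, Lemma~\ref{l:g} gives $\gamma_k^{(d)}(x,1)=C_k^{(d)}(x)y(x)^{k+d}$, while Lemma~\ref{l:g2} bounds the second term by $\mathcal{O}(k\,y(|x|)^{k+d})\cdot|u-1|^2$. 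Factoring out $C_k^{(d)}(x)(u-1)y(x)^{k+d}$, the remainder term is $\mathcal{O}(k|u-1|)$ provided $C_k^{(d)}(x)$ is bounded away from $0$ uniformly on $\Delta_\epsilon$ (which holds by Lemma~\ref{l:g}, since $C^{(d)}(\rho)=C_d\rho^d\neq 0$ and the $C_k^{(d)}$ converge uniformly, shrinking $\epsilon,\eta$ if necessary), and that $|y(x)^{k+d}|$ and $y(|x|)^{k+d}$ are comparable on $\Delta_\epsilon$ for $u\in\Xi_k$. This yields \eqref{eq:wkgk}.

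For \eqref{eq:skgk}, recall $\Sigma_k^{(d)}(x,u)=\sum_{i\ge 2}\frac{1}{i}w_k^{(d)}(x^i,u^i)$. For $i\ge 2$ and $|x|\le\rho+\eta$ the arguments $x^i$ lie in a region where the a~priori bounds of the previous section and the recurrence-based estimates apply, and more importantly where $C_k^{(d)}(x^i)=C_k^{(d)}(x^i)$ is analytic; plugging the (already known) expansion $w_k^{(d)}(x^i,u^i)=C_k^{(d)}(x^i)(u^i-1)y(x^i)^{k+d}+\mathcal{O}((u^i-1)^2 y(|x|^i)^{k+d})$ into the sum, using $u^i-1=(u-1)(1+u+\cdots+u^{i-1})$ and $|u^i-1|\le i|u-1|\le 2$, the linear part becomes
\begin{align*}
\sum_{i\ge 2}\frac{1}{i}C_k^{(d)}(x^i)(u^i-1)y(x^i)^{k+d}
= (u-1)\,y(x^2)^{k+d}\sum_{i\ge 2}C_k^{(d)}(x^i)\Bigl(\tfrac{y(x^i)}{y(x^2)}\Bigr)^{k+d}\Bigl(\tfrac{1+\cdots+u^{i-1}}{i}\Bigr).
\end{align*}
The factor $(1+\cdots+u^{i-1})/i$ equals $1+\mathcal{O}(i|u-1|)$, and absorbing the deviation into the error term (using that $\sum_{i\ge 2}i\,|y(x^i)/y(x^2)|^{k+d}$ converges geometrically, so the extra $\mathcal{O}(|u-1|)$ contributes only to the $\mathcal{O}(|u-1|^2 y(|x|^2)^{k+d})$ remainder) gives exactly $\tilde C_k^{(d)}(x)(u-1)y(x^2)^{k+d}$ with $\tilde C_k^{(d)}(x)=\sum_{i\ge 2}C_k^{(d)}(x^i)(y(x^i)/y(x^2))^{k+d}$ as claimed; the quadratic parts sum to $\mathcal{O}(|u-1|^2 y(|x|^2)^{k+d})$ since $\sum_{i\ge2}\frac1i i^2|y(x^i)/y(x^2)|^{k+d}$ converges.

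It remains to treat the limit function $\tilde C^{(d)}(x)$ and its convergence rate. Since $y(x^i)/y(x^2)$ has modulus at most some fixed $L_0<1$ for $i\ge 3$ (and equals $1$ for $i=2$), the series defining $\tilde C_k^{(d)}(x)$ converges uniformly and is dominated by its $i=2$ term plus a geometric tail; using Lemma~\ref{l:g}'s uniform bound $C_k^{(d)}(x)=C^{(d)}(x)+\mathcal{O}(L^k)$ together with $|y(x^i)/y(x^2)|^{k+d}\le L_0^{(k+d)(i-2)}$ for $i\ge 3$, one bounds $|\tilde C_k^{(d)}(x)-\tilde C^{(d)}(x)|$ where $\tilde C^{(d)}(x)=\sum_{i\ge 2}C^{(d)}(x^i)(y(x^i)/y(x^2))^{k+d}$ --- wait, note $\tilde C^{(d)}$ as defined still carries $k$ through the exponent, so more precisely one shows $\tilde C_k^{(d)}(x)-C_k^{(d)}(x^2)=\mathcal{O}(L_0^{k+d})$ and $C_k^{(d)}(x^2)\to C^{(d)}(x^2)$ at rate $\mathcal{O}(L^k)$, whence $\tilde C_k^{(d)}(x)=C^{(d)}(x^2)+\mathcal{O}(L^k)$ after renaming the limit $\tilde C^{(d)}(x):=C^{(d)}(x^2)$ and adjusting $L$. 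Analyticity of $\tilde C_k^{(d)}$ follows from analyticity of the $C_k^{(d)}$ (Lemma~\ref{l:g}) and locally uniform convergence of the defining series.

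\textbf{Main obstacle.} The delicate point is the uniformity in $u\in\Xi_k$ combined with the $k\to\infty$ limit: one must verify that $|y(x)|^{k+d}$ and $y(|x|)^{k+d}$ stay comparable (ratio bounded above and below) throughout $\Delta_\epsilon\times\Xi_k$, so that the second-derivative bound of Lemma~\ref{l:g2}, which is stated with $y(|x|)$, can legitimately be folded into a relative error $\mathcal{O}(k|u-1|)$ against the main term carrying $y(x)^{k+d}$; this is what forces the shrinking of $\epsilon,\theta,\tilde\eta$ and is the step requiring the most care.
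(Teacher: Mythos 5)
Your proposal is correct and follows essentially the same route as the paper's proof: a second-order Taylor expansion in $u$ around $1$ combined with Lemmas~\ref{l:g} and \ref{l:g2} for \eqref{eq:wkgk}, and a termwise Taylor expansion of $w_k^{(d)}(x^i,u^i)$ for $i\ge 2$ (where $|x^i|<\rho-\eta$) for \eqref{eq:skgk}. Your additional observations --- that one needs $C_k^{(d)}$ bounded away from zero near $\rho$ to turn the absolute Taylor remainder into the relative error $\mathcal{O}(k|u-1|)$, and that the limit is explicitly $\tilde C^{(d)}(x)=C^{(d)}(x^2)$ because the $i\ge 3$ terms are $\mathcal{O}(L_0^{k})$ --- are correct refinements of points the paper leaves implicit.
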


\begin{proof}
 To prove the first statement, we expand $w_k^{(d)}(x,u)$ into a Taylor polynomial of degree $2$ around $u=1$ and apply Lemmas \ref{l:g} and \ref{l:g2}. 

To prove the second statement, we again use Taylor series. Note that for $i \geq2$ we have $|x^i| < \rho-\eta$ if $|x| < \rho+\eta$ and $\eta$ is sufficiently small. We get 
\begin{align*}
 w_k^{(d)}(x^i,u^i) = C_k^{(d)}(x^i)(u^i-1)y(x^i)^{k+d} + \mathcal{O}(|u^i-1|^2y(|x^i|)^{k+d})
\end{align*}

and consequently 
\begin{align*}
 \Sigma_k^{(d)}(x,u) &= \sum\limits_{i \geq 2} \frac{1}{i} C_k^{(d)}(x^i)(u^i-1)y(x^i)^{k+d}+ \mathcal{O}(|u-1|^2 y(|x^2|)^{k+d})\\
&=(u-1)\tilde{C}_k^{(d)}(x)y(x^2)^{k+d} + \mathcal{O}(|u^i-1|^2y(|x^i|)^{k+d}),
\end{align*}

where we used the property that 
\begin{align*}
 \sum\limits_{i \geq 2} C_k^{(d)}(x^i) \frac{u^i-1}{i(u-1)}\frac{y(x^i)^{k+d}}{y(x^2)^{k+d}} 
&=\sum\limits_{i \geq 2} C_k^{(d)}(x^i) \frac{(1+u+\cdots+u^{i-1})}{i}\frac{y(x^i)^{k+d}}{y(x^2)^{k+d}}\\
&=\tilde{C}_k^{(d)}(x)+\mathcal{O}(\tilde{C}_k^{(d)}(x)(u-1))
\end{align*}

represents an analytic function in $x$ and $u$, and thus its leading term, as $u \to\infty$, is our function $\tilde{C}_k^{(d)}(x)$. Finally, since $C_k^{(d)}(x) = C^{(d)}(x) + \mathcal{O}(L^k)$ it follows that $\tilde{C}_k^{(d)}(x)$ has a limit $\tilde{C}^{(d)}(x)$ with the same order of convergence.
\end{proof}

\begin{lem}\label{lem:wkd}
 For $x \in \Delta_\epsilon$ and $u \in \Xi_k$ (with the constants $\epsilon, \theta, \tilde{\eta}$ as in Lemma \ref{lem:gamma}) we have
\begin{align*}
 w_k^{(d)}(x,u) = \frac{(u-1)y(x)^{k+d} C_k^{(d)}(x)}{1-\frac{y(x)^d C_k^{(d)}(x)\cdot(u-1)}{2}\frac{1-y(x)^k}{1-y(x)} + \mathcal{O}(|u-1|)}
\end{align*}
\end{lem}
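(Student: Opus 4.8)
The plan is to rewrite the recursion \eqref{eq:rec} as $w_{k+1}^{(d)}(x,u)=y(x)\bigl(\exp\bigl(w_k^{(d)}(x,u)+\Sigma_k^{(d)}(x,u)\bigr)-1\bigr)$ (divide \eqref{eq:rec} by \eqref{treefun}) and to pass to the quotient $\Phi_k(x,u):=(u-1)\gamma_k^{(d)}(x,1)/w_k^{(d)}(x,u)$, which for $u\neq1$ is well defined on $\Delta_\epsilon\times\Xi_k$ and equals $1+\mathcal{O}(k|u-1|)$ there by Lemma~\ref{l:wkc} (recall $w_k^{(d)}(x,u)=C_k^{(d)}(x)(u-1)y(x)^{k+d}(1+\mathcal{O}(k|u-1|))$ and $\gamma_k^{(d)}(x,1)=C_k^{(d)}(x)y(x)^{k+d}$). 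In this language the assertion is equivalent to
\[
\Phi_k(x,u)=1-\frac{u-1}{2}\,y(x)^d C_k^{(d)}(x)\,\frac{1-y(x)^k}{1-y(x)}+\mathcal{O}(|u-1|).
\]

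First I would set up a recursion for $\Phi_k$. Combining the identity above with $\gamma_{k+1}^{(d)}(x,1)=y(x)\bigl(\gamma_k^{(d)}(x,1)+\Gamma_k^{(d)}(x)\bigr)$ from the proof of Lemma~\ref{l:g}, where $\Gamma_k^{(d)}(x)=\sum_{i\geq2}\gamma_k^{(d)}(x^i,1)=\mathcal{O}(L^k)$ by Corollary~\ref{cor:sumgamma}, and expanding $z/(e^z-1)=1-z/2+\mathcal{O}(z^2)$ at $z=w_k^{(d)}+\Sigma_k^{(d)}$, one gets
\[
\Phi_{k+1}=\frac{(u-1)\bigl(\gamma_k^{(d)}(x,1)+\Gamma_k^{(d)}(x)\bigr)}{w_k^{(d)}(x,u)+\Sigma_k^{(d)}(x,u)}-\frac{u-1}{2}\bigl(\gamma_k^{(d)}(x,1)+\Gamma_k^{(d)}(x)\bigr)+\mathcal{O}\bigl(|u-1|^2|y(x)|^{2k}\bigr).
\]
The decisive point is that $\Sigma_k^{(d)}$ and $w_k^{(d)}$ agree to first order in $u-1$ with $(u-1)\Gamma_k^{(d)}(x)$ and $(u-1)\gamma_k^{(d)}(x,1)$: since $\Sigma_k^{(d)}(x,1)=0$ and $\partial_u\Sigma_k^{(d)}(x,u)\big|_{u=1}=\Gamma_k^{(d)}(x)$ we have $\Sigma_k^{(d)}(x,u)=(u-1)\Gamma_k^{(d)}(x)+\mathcal{O}(|u-1|^2L^k)$, and $w_k^{(d)}(x,u)=(u-1)\gamma_k^{(d)}(x,1)+\mathcal{O}(|u-1|^2k|y(x)|^{k+d})$ by Lemma~\ref{l:wkc}. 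Hence the cross term $\Gamma_k^{(d)}(x)\,w_k^{(d)}(x,u)-\gamma_k^{(d)}(x,1)\,\Sigma_k^{(d)}(x,u)$ is $\mathcal{O}(|u-1|^2kL^k|y(x)|^{2(k+d)})$, one order in $u-1$ smaller than a naive estimate, and using $\Phi_k\,w_k^{(d)}(x,u)=(u-1)\gamma_k^{(d)}(x,1)$ together with the lower bound $|w_k^{(d)}(x,u)|\gg|u-1|\,|y(x)|^{k+d}$ (Lemma~\ref{l:wkc}) one finds that the first quotient equals $\Phi_k+\mathcal{O}(|u-1|\,kL^k)$. Since $\Gamma_k^{(d)}(x)=\mathcal{O}(L^k)$, the recursion collapses to the telescoping form
\[
\Phi_{k+1}=\Phi_k-\frac{u-1}{2}\,\gamma_k^{(d)}(x,1)+E_k,\qquad |E_k|\leq|u-1|\,\mathcal{O}(kL^k)+|u-1|^2\,\mathcal{O}(|y(x)|^{2k}).
\]

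Summing this from $k=0$ and using $\Phi_0=1$ — which holds because $w_0^{(d)}(x,u)=(u-1)\,xZ_{d-1}(y(x),\dots,y(x^{d-1}))=(u-1)\gamma_0^{(d)}(x,1)$ \emph{exactly} — yields $\Phi_k=1-\tfrac{u-1}{2}\sum_{j=0}^{k-1}\gamma_j^{(d)}(x,1)+\mathcal{O}(|u-1|)$; here the accumulated error is $\mathcal{O}(|u-1|)$ because $\sum_{j\geq0}jL^j<\infty$ and, using $k|u-1|\leq\tilde\eta$ on $\Xi_k$, $|u-1|^2\sum_{j<k}|y(x)|^{2j}=\mathcal{O}(|u-1|)$. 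Finally, writing $\gamma_j^{(d)}(x,1)=y(x)^{j+d}C_j^{(d)}(x)$ (Lemma~\ref{l:g}), replacing each $C_j^{(d)}(x)$ by $C_k^{(d)}(x)$ at the cost of $\mathcal{O}(L^j)$ (again Lemma~\ref{l:g}, as $j\leq k$), and picking $\epsilon$ small enough that $|y(x)|\,L<1$ on $\Delta_\epsilon$, one obtains
\[
\sum_{j=0}^{k-1}y(x)^{j+d}C_j^{(d)}(x)=y(x)^dC_k^{(d)}(x)\,\frac{1-y(x)^k}{1-y(x)}+\mathcal{O}(1),
\]
and the $\mathcal{O}(1)$ is swallowed by the prefactor $\tfrac{u-1}{2}$. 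This is precisely the asserted expression for $\Phi_k$, and $w_k^{(d)}(x,u)=(u-1)\gamma_k^{(d)}(x,1)/\Phi_k$ gives the claim.

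The main difficulty is the uniform error bookkeeping in the middle step: one must check that \emph{every} error generated in the recursion for $\Phi_k$ still carries a factor $u-1$, so that summation produces $\mathcal{O}(|u-1|)$ and not $\mathcal{O}(1)$ in the denominator. This hinges entirely on the first-order coincidences $\Sigma_k^{(d)}\approx(u-1)\Gamma_k^{(d)}$ and $w_k^{(d)}\approx(u-1)\gamma_k^{(d)}(\cdot,1)$, hence on the quantitative a priori estimates of Section~\ref{sec:3} — in particular Corollary~\ref{cor:sumgamma} and Lemmas~\ref{lem:gamma}, \ref{l:g2}, \ref{l:wkc}. A secondary point is to choose $\epsilon$ small enough that $|x^i|<\rho$ for $i\geq2$ and $|y(x)|\,L<1$ on $\Delta_\epsilon$, which is what makes $\Gamma_k^{(d)}$ and $\Sigma_k^{(d)}$ exponentially smaller than $\gamma_k^{(d)}(\cdot,1)$ and $w_k^{(d)}$ and makes all the geometric-type series converge.
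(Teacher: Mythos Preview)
Your argument is correct and takes a genuinely different route from the paper's. The paper works with the quantity $y(x)^k/w_k^{(d)}$, derives the recursion
\[
\frac{y^{k+1}}{w_{k+1}^{(d)}}=\frac{y^k}{w_k^{(d)}}-\frac{\Sigma_k^{(d)}y^{k+1}}{w_k^{(d)}w_{k+1}^{(d)}}-\frac{y^k}{2}+\cdots,
\]
and then has to evaluate the accumulated $\Sigma$-contribution $c_k^{(d)}=w_0^{(d)}\sum_{\ell<k}\Sigma_\ell^{(d)}y^{\ell+1}/(w_\ell^{(d)}w_{\ell+1}^{(d)})$ by invoking the identity $\tilde C_\ell^{(d)}=(C_{\ell+1}^{(d)}-C_\ell^{(d)})(y(x)/y(x^2))^{\ell+d}$, which makes that sum telescope to $\tfrac{xZ_{d-1}}{y^d}\bigl(1/C_0^{(d)}-1/C_k^{(d)}\bigr)$. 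Your choice of normalization $\Phi_k=(u-1)\gamma_k^{(d)}(x,1)/w_k^{(d)}$ builds the factor $C_k^{(d)}$ into the recursion from the start: since $\gamma_{k+1}^{(d)}(x,1)=y(x)(\gamma_k^{(d)}(x,1)+\Gamma_k^{(d)}(x))$ has exactly the same first-order structure in $\Gamma_k$ as $w_{k+1}^{(d)}$ has in $\Sigma_k^{(d)}$, the $\Sigma$/$\Gamma$ terms cancel to leading order automatically and the recursion for $\Phi_k$ is already telescoping. This sidesteps the $\tilde C_\ell^{(d)}$ identity entirely and is a cleaner organization of the same analytic content; both arguments ultimately rest on the same a~priori estimates (Corollary~\ref{cor:sumgamma}, Lemmas~\ref{l:g}--\ref{l:wkc}).

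One small slip: your bound for the cross term $\Gamma_k^{(d)}w_k^{(d)}-\gamma_k^{(d)}(x,1)\Sigma_k^{(d)}$ should carry $|y(x)|^{k+d}$, not $|y(x)|^{2(k+d)}$. This is harmless, since after dividing by $w_k^{(d)}(w_k^{(d)}+\Sigma_k^{(d)})\asymp|u-1|^2|y(x)|^{2(k+d)}$ you get $\mathcal O\bigl(|u-1|\,k(L/|y(x)|)^k\bigr)$, which is still summable once $\epsilon$ is small enough that $L<\inf_{\Delta_\epsilon}|y(x)|$.
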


\begin{proof}
 $w_k^{(d)}(x,u)$ satisfy the recursive relation
\begin{align*}
w_{k+1}^{(d)}(x,u)&=x \exp \left(\sum \limits_{i \geq 1} \frac{1}{i}y_k^{(d)}(x^i,u^i)\right) - y(x) \\
&=x \exp \left(\sum\limits_{i \geq 1} \frac{1}{i} \left(w_k^{(d)}(x^i,u^i) + y(x^i)\right)\right) -y(x)\\
&=y(x) \left( \exp \left(w_k^{(d)}(x,u) + \Sigma_k^{(d)}(x,u)\right)-1\right),
\end{align*}
and further, since by Lemma \ref{l:wkc} it follows that $\Sigma_k^{(d)}(x,u) = \mathcal{O}(w_k^{(d)}(x,u)L^k)= \mathcal{O}(w_k^{(d)}(x,u))$ (for brevity, we omit the variables now),
\begin{align*}
&w_{k+1}^{(d)}=y\left[(w_k^{(d)}+\Sigma_k^{(d)})+\frac{(w_k^{(d)}+\Sigma_k^{(d)})^2}{2}+\mc{O}\left((w_k^{(d)}+\Sigma_k^{(d)})^3\right)\right]\\
&=y(w_k^{(d)}+\Sigma_k^{(d)})\left(1+\frac{(w_k^{(d)}+\Sigma_k^{(d)})}{2}+\mc{O}\left((w_k^{(d)}+\Sigma_k^{(d)})^2\right)\right)\\
&=yw_k^{(d)}\left(1+\frac{\Sigma_k^{(d)}}{w_k^{(d)}}\right)\left(1+\frac{w_k^{(d)}}{2}+\mc{O}(\Sigma_k^{(d)})+\mc{O}\left((w_k^{(d)})^2\right)\right).
\end{align*}
From there, we obtain
\begin{align*}
 \frac{y}{w_{k+1}^{(d)}}\cdot\left(1+\frac{\Sigma_k^{(d)}}{w_k^{(d)}}\right)  &= \frac{1}{w_k^{(d)}}\frac{1}{\left(1+\frac{w_k^{(d)}}{2}+\mc{O}(\Sigma_k^{(d)})+\mc{O}\left((w_k^{(d)})^2\right)\right)}\\
&=\frac{1}{w_k^{(d)}} \left(1-\frac{w_k^{(d)}}{2} + \mathcal{O}(\Sigma_k^{(d)}) + \mathcal{O}(w_k^{(d)2})\right)\\
&=\frac{1}{w_k^{(d)}}-\frac{1}{2} +  \mathcal{O}\left(\frac{\Sigma_k^{(d)}}{w_k^{(d)}}\right)+\mathcal{O}(w_k^{(d)}).
\end{align*}
This leads us to a recursion
\begin{align*}
\frac{y^{k+1}}{w_{k+1}^{(d)}}=\frac{y^{k}}{w_k^{(d)}}-\frac{\Sigma_k^{(d)}\cdot y(x)^{k+1}}{w_k^{(d)}w_{k+1}^{(d)}}-\frac{1}{2} y(x)^k+ \mathcal{O}\left(\frac{\Sigma_k^{(d)}\cdot y^k}{w_k^{(d)}}\right)+ \mathcal{O}(w_k^{(d)}y^k) 
\end{align*}
which we can solve to 
\begin{align}\label{eq:10a}
 \frac{y^{k}}{w_{k}^{(d)}}&= \frac{1}{w_0^{(d)}} - \sum\limits_{\ell=0}^{k-1} \frac{\Sigma_\ell^{(d)}\cdot y(x)^{\ell+1}}{w_\ell^{(d)}w_{\ell+1}^{(d)}} - \frac{1}{2}\sum_{\ell=0}^{k-1}y^\ell+ \mathcal{O}\left(\sum\limits_{\ell=0}^{k-1}\frac{\Sigma_\ell^{(d)}\cdot y^\ell}{w_\ell^{(d)}}\right) +  \mathcal{O}(\sum\limits_{\ell=0}^{k-1}w_\ell^{(d)} y^\ell) \nonumber\\
&=\frac{1}{w_0^{(d)}}\left(1 - w_0^{(d)}\sum\limits_{\ell=0}^{k-1} \frac{\Sigma_\ell^{(d)}\cdot y^{\ell+1}}{w_\ell^{(d)}w_{\ell+1}^{(d)}} - w_0^{(d)}\frac{1}{2}\frac{1-y^k}{1-y}+ \mathcal{O}(w_0^{(d)}\frac{1-L^k}{1-L}) +  \mathcal{O}((w_0^{(d)})^2\frac{1-y^{2k}}{1-y^2})\right),
\end{align}
where we used that $\frac{\Sigma_\ell^{(d)}y^\ell}{w_\ell^{(d)}}=\mc{O}(L^\ell)$ and that by Lemma \ref{l:wkc} $w_k^{(d)}=\mc{O}(yw_{k-1}^{(d)} )= \mc{O}(y^kw_0^{(d)})$. Again we apply Lemma \ref{l:wkc} and \eqref{eq:rec} to obtain
\begin{align*}
&w_0^{(d)}\sum\limits_{\ell=0}^{k-1} \frac{\Sigma_\ell^{(d)}\cdot y(x)^{\ell+1}}{w_\ell^{(d)}w_{\ell+1}^{(d)}}=(u-1)xZ_{d-1} \sum\limits_{\ell=0}^{k-1} \frac{\tilde{C}_\ell^{(d)}(u-1)y(x^2)^{\ell+d} + \mathcal{O}(|u-1|^2y(|x|^2)^{\ell+d})}{C_\ell^{(d)}C_{\ell+1}^{(d)}y^{2(\ell+d)+1}(u-1)^2(1+\mathcal{O}(\ell|u-1|))}y^{\ell+1}\\
&\qquad=\frac{xZ_{d-1}}{y(x)^d} \sum\limits_{\ell=0}^{k-1} \frac{\tilde{C}_\ell^{(d)}y(x^2)^{\ell+d} + \mathcal{O}(|u-1|^2y(|x|^2)^{\ell+d})}{C_\ell^{(d)}C_{\ell+1}^{(d)}y(x)^{\ell+d}(1+\mathcal{O}(\ell|u-1|))}\\
&\qquad=\frac{xZ_{d-1}}{y(x)^d} \left[\sum\limits_{\ell=0}^{k-1}\frac{\tilde{C}_\ell^{(d)}}{C_\ell^{(d)}C_{\ell+1}^{(d)}}\frac{y(x^2)^{\ell+d}}{y(x)^{\ell+d}} + \sum\limits_{\ell=0}^{k-1}\frac{ \mathcal{O}(|u-1|^2y(|x|^2)^{\ell+d})}{C_\ell^{(d)}C_{\ell+1}^{(d)}y(x)^{\ell+d}}\right]\left(\frac{1}{1+\mc{O}(\ell|u-1|)}\right)\\
&\qquad=\frac{xZ_{d-1}}{y(x)^d} \left[\sum\limits_{\ell=0}^{k-1}\frac{\tilde{C}_\ell^{(d)}}{C_\ell^{(d)}C_{\ell+1}^{(d)}}\frac{y(x^2)^{\ell+d}}{y(x)^{\ell+d}} + \sum\limits_{\ell=0}^{k-1}\underbrace{\frac{ \mathcal{O}(|u-1|^2y(|x|^2)^{\ell+d})}{C_\ell^{(d)}C_{\ell+1}^{(d)}y(x)^{\ell+d}}}_{=\mc{O}(|u-1|^2L^\ell)}\right](1+\mc{O}(\ell|u-1|))\\
&\qquad \frac{xZ_{d-1}}{y(x)^d} \left[\sum\limits_{\ell=0}^{k-1}\frac{\tilde{C}_\ell^{(d)}}{C_\ell^{(d)}C_{\ell+1}^{(d)}}\frac{y(x^2)^{\ell+d}}{y(x)^{\ell+d}}+ \sum\limits_{\ell=0}^{k-1}\frac{\tilde{C}_\ell^{(d)}}{C_\ell^{(d)}C_{\ell+1}^{(d)}}\underbrace{\frac{y(x^2)^{\ell+d}}{y(x)^{\ell+d}}}_{=\mc{O}(L^\ell)}\mc{O}(\ell|u-1|)+\mc{O}(|u-1|^2)\right]\\
&\qquad=c_k^{(d)} + \mathcal{O}(|u-1|),
\end{align*}
where $c_k^{(d)}$ denotes the first sum. Note that $\frac{xZ_{d-1}}{y(x)^d}=\mc{O}(1)$. \\
Now turn back to \eqref{eq:10a} and observe that $w_0^{(d)}\frac{1-y^{2k}}{1-y^2} = \mathcal{O}(k|u-1|y(x)^d) =\mathcal{O}(y(x)^d)=\mathcal{O}(1)$ if $k|u-1| \leq \tilde{\eta}$. Thus, we obtain the following representation for $w_k^{(d)}(x,u)$:
\begin{align*}
 w_k^{(d)} = \frac{w_0^{(d)} y^{k}}{1-c_k^{(d)}(x) - \frac{w_0^{(d)}}{2}\frac{1-y^k}{1-y} + \mathcal{O}(|u-1|)}.
\end{align*}
We use the expressions 
\[C_{k+1}^{(d)}=\sum_{i\geq 1}C_k^{(d)}\frac{y(x^i)^{k+d}}{y(x)^{k+d}}\! \textrm{ and }\!
\tilde{C}_k^{(d)} = \sum_{i\geq2} C_k^{(d)}\frac{y(x^i)^{k+d}}{y(x^2)^{k+d}},\] which are
consequences of Lemmas \ref{l:g}, Equation \eqref{eq:recgamma} and Lemma~\ref{l:wkc}, to obtain
\begin{align}\label{eq:tildeC}
 \tilde{C}_k^{(d)}(x)=(C_{k+1}^{(d)}(x)-C_k^{(d)}(x))\left(\frac{y(x)}{y(x^2)}\right)^{k+d}.
\end{align}
This provides the telescope sum:
\begin{align}
 c_k^{(d)} &= \frac{x Z_{d-1}}{y(x)^d} \sum\limits_{\ell=0}^{k-1} \frac{C_{\ell+1}^{(d)} - C_\ell^{(d)}}{C_\ell^{(d)}C_{\ell+1}^{(d)}}\\
 &= \frac{x Z_{d-1}}{y(x)^d}\left(\frac{1}{C_0^{(d)}} - \frac{1}{C_k^{(d)}}\right)\\
\end{align}

and hence, since $C_0^{(d)} = \frac{\gamma_0^{(d)}}{y(x)^d}=\frac{xZ_{d-1}}{y(x)^d}$, we get
\[
1-c_k^{(d)}(x) = \frac{x Z_{d-1}(y(x),\dots,y(x^{d-1}))}{y(x)^d C_k^{(d)}(x)},
\]
which yields the result. 
\end{proof}

It is now easy to proof Theorem \ref{th:exp}. With $x=\rho(1+\frac{s}{n})$, $u=e^{\frac{it}{\sqrt{n}}}$, $d$ and $t\neq0$ fixed, $k=\kappa\sqrt{n}$ and \eqref{exp} we obtain the expansions:

\begin{align*}
u-1 &\sim \frac{it}{\sqrt{n}}\\
1-y(x) &\sim b \sqrt{\frac{-\rho s}{n}} \\
y(x)^k&\sim 1-kb \sqrt{\frac{-\rho s}{n}} + \cdots \sim e^{-\kappa b \sqrt{-\rho s}}\\
y(x)^d&\sim 1-db \sqrt{\frac{-\rho s}{n}} + \cdots \sim 1.
\end{align*}

Since the functions $C_k^{(d)}(x)$ are continuous and uniformly convergent to $C^{(d)}(x)$, they
are also uniformly continuous and thus $C_k^{(d)}(x)\sim C^{(d)} (\rho) = C_d \rho^d$. This leads to

\begin{align}
w_k^{(d)}(x,u) &\sim \frac{\frac{it}{\sqrt{n}}C_d\rho^{d} e^{-\kappa b \sqrt{-\rho
s}}}{1-\frac{it}{\sqrt{n}}C_d\rho^{d}\left(\frac{1}{2}\frac{1-e^{-\kappa b \sqrt{-\rho s}}}{b\sqrt{\frac{-\rho s}{n}}}\right)}\nonumber\\
&=\frac{1}{\sqrt{n}} \cdot \frac{\sqrt{-s}itC_d\rho^{d}e^{-\kappa b \sqrt{-\rho s}}}{\sqrt{-s} -
\frac{itC_d\rho^{d}}{2b\sqrt{\rho}}\left(1-e^{-\kappa b \sqrt{-\rho s}}\right)}\nonumber\\
&=\frac{C_d \rho^d}{\sqrt{n}} \cdot \frac{it\sqrt{-s}e^{\frac{1}{2}-\kappa b \sqrt{-\rho
s}}}{\sqrt{-s}e^{\frac{1}{2}\kappa b \sqrt{-\rho s}} - \frac{itC_d\rho^{d.}}{b\sqrt{\rho}}\sinh(\frac{1}{2}\kappa b \sqrt{-\rho s})}
\end{align}

\section{Finite dimensional limiting distributions} \label{sec:5}

First we consider the case $m=2$. The computation of the $2$-dimensional limiting distribution shows the general method of the proof. Iterative applications of the arguments will eventually prove Theorem \ref{th:3}. 

\begin{thm}\label{th:expmult}
Let $x = \rho (1+\frac{s}{n})$, $u_1=e^{\frac{it_1}{\sqrt{n}}}, u_2=e^{\frac{it_2}{\sqrt{n}}}$, $k=\kappa \sqrt{n}$ and $h=\eta \sqrt{n}$. Moreover, assume that $|\arg s| \geq \Theta > 0$ and, as $n \to \infty$, we have $s=\mathcal{O}(\log^2 n)$, whereas $\kappa$, $t_1$ and $t_2$ are fixed. Then, for large $d$, $w_{k,h}^{(d)}(x,u)$ admits the local representation 

\begin{align}\label{eq:wkdmult}
&w_{k,h}^{(d)}(x,u,v) \sim \frac{C_d \rho^d}{\sqrt{n}}\nonumber\\
 &\times \frac{\left( it_2 +\frac{it_1 \sqrt{-s} e^{(-\frac{1}{2}\kappa b \sqrt{-\rho s
})}}{\sqrt{-s}e^{(\frac{1}{2}\kappa b \sqrt{-\rho s })}-\frac{it_1 C_d
\rho^d}{\sqrt{\rho}b}\sinh{(\frac{1}{2}\kappa b \sqrt{-\rho s})})}\right)\sqrt{-s}
e^{(-\frac{1}{2}\xi b \sqrt{-\rho s })}}{\sqrt{-s}e^{(\frac{1}{2}\xi b \sqrt{-\rho s
})}-\frac{C_d\rho^d}{b\sqrt{\rho}}\left(it_2+\frac{it_1 \sqrt{-s} e^{(-\frac{1}{2}\kappa b
\sqrt{-\rho s })}}{\sqrt{-s}e^{(\frac{1}{2}\kappa b \sqrt{-\rho s })}-\frac{it_1 C_d \rho^d}{\sqrt{\rho}b}\sinh{(\frac{1}{2}\kappa b \sqrt{-\rho s})})}\right)\sinh{(\frac{1}{2}\xi b \sqrt{-\rho s})})}.
\end{align}
\end{thm}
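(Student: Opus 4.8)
The plan is to mirror the one-dimensional argument of Theorem~\ref{th:exp}, but now tracking the extra variable that records degree-$d$ nodes on the higher level $k+h$. The starting point is the recurrence \eqref{eq:rec2dim} for $y_{k,h}^{(d)}(x,u_1,u_2)$, rewritten in terms of the difference $w_{k,h}^{(d)} = y_{k,h}^{(d)} - y(x)$. Observe that $w_{0,h}^{(d)}(x,u_1,u_2)$ is exactly $w_h^{(d)}(x,u_2)$ plus the planted-root correction $(u_1-1)xZ_{d-1}(y_h^{(d)}(x,u_2),\dots)$, so the $h$ levels below the bottom marked level are already controlled by Theorem~\ref{th:exp} (with $u_2$ in place of $u$ and $h=\eta\sqrt n$ in place of $k$); in particular $w_h^{(d)}(x,u_2)$ has the local shape \eqref{eq:wkd}, which is where the inner fraction in \eqref{eq:wkdmult} (the one built from $t_2$) comes from. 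The upper recursion $w_{k+1,h}^{(d)} = y(x)\big(\exp(w_{k,h}^{(d)} + \Sigma_{k,h}^{(d)}) - 1\big)$ is then iterated over the remaining $k$ steps exactly as in the proof of Lemma~\ref{lem:wkd}.

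First I would re-establish the a priori and refined estimates of Sections~\ref{sec:3}--\ref{sec:4} in the two-variable setting: the analogues of Lemmas~\ref{l:g}, \ref{lem:gamma}, \ref{l:g2}, \ref{l:wkc} for $y_{k,h}^{(d)}$, where now the ``initial data'' at level $0$ of the upper recursion is $y_{0,h}^{(d)}$ rather than $y_0^{(d)}$. The key structural facts carry over verbatim: $\gamma$-type derivatives in $u_1$ factor as a slowly-varying coefficient times $y(x)^{k+d}$, the $i\ge 2$ tail sums are $\Ord{L^k}$ (Corollaries~\ref{cor:sigma}--\ref{cor:sumgamma} and their $(d_1,d_2)$ versions), and $\Sigma_{k,h}^{(d)} = \Ord{w_{k,h}^{(d)} L^k}$. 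Then, writing $A(x,u_2) := w_{0,h}^{(d)}(x,u_1,u_2)$ and repeating the Taylor expansion of $\exp$ and the telescoping computation leading to \eqref{eq:10a}, I obtain
\begin{align*}
 \frac{y(x)^k}{w_{k,h}^{(d)}} = \frac{1}{A(x,u_2)} - \frac{1}{2}\frac{1-y(x)^k}{1-y(x)}\cdot\frac{A(x,u_2)}{w_{0}^{(d)}(x,u_1)} \cdot(\text{bounded factor}) + \Ord{\cdots}
\end{align*}
which after using the telescope identity \eqref{eq:tildeC} again collapses to a clean closed form. The point is that the $1/A(x,u_2)$ term now contains, instead of the constant $1/(xZ_{d-1})$, the full one-dimensional answer from Theorem~\ref{th:exp} evaluated at parameter $\eta$ for the depth-$h$ block, which is precisely why $\xi$ (the rescaled version of $\kappa+\eta$, or rather the height of the upper block) and the $t_1$-fraction appear inside the $t_2$-fraction in \eqref{eq:wkdmult}.

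The substitution step is routine: plug in $x=\rho(1+s/n)$, $u_j = e^{it_j/\sqrt n}$, $k=\kappa\sqrt n$, $h=\eta\sqrt n$, use \eqref{exp} to get $1-y(x)\sim b\sqrt{-\rho s/n}$, $y(x)^k\sim e^{-\kappa b\sqrt{-\rho s}}$, $y(x)^h\sim e^{-\eta b\sqrt{-\rho s}}$, $u_j-1\sim it_j/\sqrt n$, and $C_{k,h}^{(d)}(x)\to C_d\rho^d$ by uniform continuity, then simplify; the nested fraction in \eqref{eq:wkdmult} drops out, with $\xi b\sqrt{-\rho s}$ playing the role that $\kappa b\sqrt{-\rho s}$ did before. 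The main obstacle is bookkeeping the error terms through the composition: one must check that the $\Ord{|u_1-1|}$ and $\Ord{|u_2-1|}$ errors from the inner (depth-$h$) expansion do not get amplified when they pass through the outer telescoping sum, i.e.\ that the analogue of the estimate $w_0^{(d)}\frac{1-y^{2k}}{1-y^2} = \Ord{1}$ still holds when $w_0^{(d)}$ is replaced by the genuinely $t_2$-dependent quantity $w_{0,h}^{(d)}$, whose size is $\Ord{(|u_1-1|+|u_2-1|)}$ rather than $\Ord{|u_1-1|}$. This is where the hypothesis $u_1,u_2\in\Xi_k$ (so $k|u_j-1|\le\tilde\eta$) and the restriction to large $d$ are used, exactly as in the one-dimensional case; granting those, the estimates propagate and the stated asymptotic follows. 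The extension to general $m$ in Theorem~\ref{th:3} is then obtained by induction, treating the block between consecutive marked levels as a ``depth-$h_j$'' one-dimensional problem whose initial data is the already-computed $(m-1)$-dimensional expression.
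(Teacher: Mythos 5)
Your proposal follows essentially the same route as the paper: reduce to the initial condition $w_{0,h}^{(d)}=w_h^{(d)}(x,u_2)+(u_1-1)xZ_{d-1}(\dots)$, which already contains the depth-$h$ one-dimensional answer from Theorem~\ref{th:exp}, re-establish the two-variable analogues of Lemmas~\ref{l:g}--\ref{l:wkc}, and then run the same telescoping recursion as in Lemma~\ref{lem:wkd} before substituting the local expansions (the paper does exactly this, writing $U=(u_1-1)y(x)^d$, $W=w_h^{(d)}(x,u_2)$ and collapsing $f_k^{(d)}$ by the telescope identity \eqref{eq:tildeC}). Your intermediate display for $y(x)^k/w_{k,h}^{(d)}$ is slightly misstated (the $\frac12\frac{1-y^k}{1-y}$ term carries coefficient $w_{0,h}^{(d)}$, not the extra ratio you wrote), but this is a sketch-level imprecision, not a gap in the approach.
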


\begin{proof}
Note that $y_{k,h}^{(d)}(x,u_1,1) = y_k^{(d)}(x,u_1)$ and $y_{k,h}^{(d)}(x,1,u_2) =
y_{k+h}(x,u_2)$. Considering the first derivative, we denote by 
\begin{align*}
\frac{\partial}{\partial u_1} y_{k,h}^{(d)}(x,u_1,u_2)&=:\gamma_{k,h}^{(d)[u_1]}(x,u_1,u_2)\\
\frac{\partial}{\partial u_2} y_{k,h}^{(d)}(x,u_1,u_2)&=:\gamma_{k,h}^{(d)[u_2]}(x,u_1,u_2),
\end{align*}
and by simple induction, we observe that:
\begin{align}\label{eq:mult}
\gamma_{k,h}^{(d)[u_1]}(x,1,1) &= \gamma_k^{(d)}(x,1) = \gamma_k^{(d)}(x)=C_k^{(d)}(x)y(x)^{k+d}\nonumber\\
\gamma_{k,h}^{(d)[u_2]}(x,1,1) &= \gamma_{k+h}^{(d)}(x,1) = \gamma_{k+h}^{(d)}(x)=C_{k+h}^{(d)}(x)y(x)^{k+h+d}.
\end{align}
As $|\gamma_{k,h}^{(d)[u_i]}(x,u_1,u_2)| \leq \gamma_{k,h}^{(d)[u_i]}(x,1,1)$ for $i=1,2; u_1\in \Xi_k,u_2\in \Xi_{k+h}$, and $|x|\leq \rho$ it follows that
$|\gamma_{k,h}^{(d)[u_1]}(x,u_1,u_2)| = \mc{O}(y(x)^{k+d})$ and $\gamma_{k,h}^{(d)[u_2]}(x,u_1,u_2)\leq \mc{O}(y(x)^{k+h+d})$ in the same regions. To be more precise, we can prove the following analogue to Lemma \ref{lem:gamma}.

\begin{lem}\label{lem:gamma2}
There exist constants $\epsilon, \vartheta, \tilde{\eta}_1,\tilde{\eta}_2$, such that or $x\in\Delta_\epsilon, u_1\in \Xi_{k}$ and $u_2\in\Xi_{k+h}$ 
\begin{align*}
 \gamma_{k,h}^{(d)[u_1]}(x,u_1,u_2)+\gamma_{k,h}^{(d)[u_2]}(x,u_1,u_2)=\mc{O}(|y(x)|^{k+d})
\end{align*}
\end{lem}
\begin{proof}
 Set
\begin{align*}
 C_{\ell,h}^{(d)[u_1]}&=\sup_{\substack{x\in\Delta_\epsilon\\u_1\in\Xi_k,u_2\in\Xi_{k+h}}}\left|\frac{\gamma_{\ell,h}^{(d)[u_1]}(x,u_1,u_2)}{y(x)^{k+d}}\right|,\\
C_{\ell,h}^{(d)[u_2]}&=\sup_{\substack{x\in\Delta_\epsilon\\u_1\in\Xi_k,u_2\in\Xi_{k+h}}}\left|\frac{\gamma_{\ell,h}^{(d)[u_2]}(x,u_1,u_2)}{y(x)^{k+h+d}}\right|.
\end{align*}
As in the proof of Lemma \ref{l:g2}, we apply Taylor's theorem (in two variables) to get
\begin{align*}
 |y_{\ell+1,h}^{(d)}(x,u_1,u_2)|&=|y(x)|\exp&&\!\!\!\!\!\!\left(|w_\ell^{(d)}(x,u_1,u_2)|+\sum_{i \geq 2}\frac{|w_\ell^{(d)}(x^i,u_1^i,u_2^i)|}{i}\right)\\
 &\leq|y(x)|\exp&&\!\!\!\!\!\!\left(\gamma_{\ell,h}^{(d)[u_1]}(x,1+\vartheta_1(u_1-1),1+\vartheta_2(u_2-1))(u_1-1)\right.\\
&&&\!\!\!\!\!\!+\left.\gamma_{\ell,h}^{(d)[u_2]}(x,1+\vartheta_1(u_1-1),1+\vartheta_2(u_2-1))(u_2-1)\right.\\
&&&\!\!\!\!\!\!+\left.\sum_{i\geq2}\gamma_{\ell,h}^{(d)[u_1]}(x^i,1+\vartheta_1(u_1^i-1),1+\vartheta_2(u_2^i-1))\frac{(u_1^i-1)}{i}\right.\\
&&&\!\!\!\!\!\!+\left.\gamma_{\ell,h}^{(d)[u_2]}(x^i,1+\vartheta_1(u_1^i-1),1+\vartheta_2(u_2^i-1))\frac{u_2^i-1}{i}\right)
\end{align*}
\[ \left|y_{\ell+1,h}^{(d)}(x,u_1,u_2)\right|\leq|y(x)|\exp\left(C_{\ell,h}^{(d)[u_1]}|u_1-1|y(x)^{\ell+d}+C_{\ell,h}^{(d)[u_2]}|u_2-1|y(x)^{\ell+d}+\mc{O}(L^\ell)\right),\]
where we use that, for $i\geq2$,
\begin{align*} 
|\gamma_{\ell,h}^{(d)[u_1]}(x^i,1+\vartheta_1(u_1^i-1),1+\vartheta_2(u_2^i-1))|&\leq|\gamma_{\ell,h}^{(d)[u_1]}(x^i,1,1)| \;\;\textrm{and}\\ 
|\gamma_{\ell,h}^{(d)[u_2]}(x^i,1+\vartheta_1(u_1^i-1),1+\vartheta_2(u_2^i-1))|&\leq|\gamma_{\ell,h}^{(d)[u_2]}(x^i,1,1)|.
\end{align*}
By using recursion \eqref{eq:rec2dim} and Lemma \ref{lem:gamma}, we obtain
\begin{align*}
 C_{\ell+1,h}^{(d)[u_1]}&=\sup_{\substack{x\in\Delta_\epsilon\\u_1\in\Xi_k,u_2\in\Xi_{k+h}}}\left|\frac{y_{\ell+1,h}^{(d)}(x,u_1,u_2)}{y(x)}\right|\left|\frac{\gamma_{\ell,h}^{(d)[u_1]}(x,u_1,u_2)+\sum_{i\geq2}\gamma_{\ell,h}^{(d)[u_1]}(x^i,u_1^i,u_2^i)}{y(x)^{k+d}}\right|\\
&\leq  \exp\left(C_{\ell,h}^{(d)[u_1]}\frac{\eta_1}{k}+C_{\ell,h}^{(d)[u_2]}\frac{\eta_2}{k}+\mc{O}(L^\ell)\right)\left(C_{\ell,h}^{(d)[u_1]}+\mc{O}(L^\ell)\right)\\
&=C_{\ell,h}^{(d)[u_1]}\exp\left( C_{\ell,h}^{(d)[u_1]}\frac{\eta_1}{k}+C_{\ell,h}^{(d)[u_2]}\frac{\eta_2}{k}\right)\left(1+\mc{O}(L^\ell)\right),
\end{align*}
and analogously
\[C_{\ell+1,h}^{(d)[u_2]}\leq C_{\ell,h}^{(d)[u_2]}\exp\left(C_{\ell,h}^{(d)[u_1]}\frac{\eta_1}{k}+C_{\ell,h}^{(d)[u_2]}\frac{\eta_2}{k}\right)\left(1+\mc{O}(L^\ell)\right).\]
We choose $\eta_1$ and $\eta_2$ such that $e^{2c_o(C_{0,h}^{(d)[u_1]}\eta_1+C_{0,h}^{(d)[u_2]}\eta_2}\leq 2$. Then, by induction we get
\begin{align*}
C_{\ell,h}^{(d)[u_1]}&\leq C_{0,h}^{(d)[u_1]}\prod_{j<\ell}(1+\mc{O}(L^j))e^{2c_o(C_{0,h}^{(d)[u_1]}\eta_1+C_{0,h}^{(d)[u_2]}\eta_2)\frac{\ell}{k}}\leq2C_{0,h}^{(d)[u_1]}c_0=\mc{O}(1),\\
C_{\ell,h}^{(d)[u_2]}&\leq C_{0,h}^{(d)[u_2]}\prod_{j<\ell}(1+\mc{O}(L^j))e^{2c_o(C_{0,h}^{(d)[u_1]}\eta_1+C_{0,h}^{(d)[u_2]}\eta_2)\frac{\ell}{k}}\leq2C_{0,h}^{(d)[u_2]}c_0=\mc{O}(1).
\end{align*}
Note therefore that
\begin{align*}
 C_{0,h}^{(d)[u_1]}&=\sup_{\substack{x\in\Delta_\epsilon\\u_1\in\Xi_k,u_2\in\Xi_{k+h}}}\left|\frac{xZ_{d-1}(y_h^{(d)}(x,u_2),\ldots,y_h^{(d)}(x^{d-1},u_2^{d-1}))}{y(x)^d}\right|=\mc{O}(1)\\
 C_{0,h}^{(d)[u_1]}&=\sup_{\substack{x\in\Delta_\epsilon\\u_1\in\Xi_k,u_2\in\Xi_{k+h}}}\left|\frac{\gamma_{0,h}^{(d)[u_2]}(x,u_1,u_2)+(u_1-1)x\frac{\partial}{\partial u_2}Z_{d-1}(y_h(x,u_2),\ldots,y_h(x^{d-1},u_2^{d-1}))}{y(x)^{h+d}}\right|\\
&=\mc{O}\left(\sup_{\substack{x\in\Delta_\epsilon\\u_1\in\Xi_k,u_2\in\Xi_{k+h}}}\left|\frac{\gamma_{0,h}^{(d)[u_2]}(x,u_1,u_2)}{y(x)^{h+d}}\right|\right)=\mc{O}(1)
\end{align*}

\end{proof}

Let \begin{align*}
\gamma_{k,h}^{(d)[2u_1]}(x,u_1,u_2)&:=\frac{\partial^2}{\partial u_1^2}y_{k,h}^{(d)}(x,u_1,u_2)\\
\gamma_{k,h}^{(d)[2u_2]}(x,u_1,u_2)&:=\frac{\partial^2}{\partial u_2^2}y_{k,h}^{(d)}(x,u_1,u_2)\\
\gamma_{k,h}^{(d)[u_1u_2]}(x,u_1,u_2)&:=\frac{\partial^2}{\partial u_1\partial u_2}y_{k,h}^{(d)}(x,u_1,u_2)\\
\gamma_{k,h}^{(d)[2]}(x,u_1,u_2)&:=\gamma_{k,h}^{(d)[2u_1]}(x,u_1,u_2)+\gamma_{k,h}^{(d)[2u_2]}(x,u_1,u_2)+\gamma_{k,h}^{(d)[u_1u_2]}(x,u_1u_2)
\end{align*}

\begin{lem}\label{l:g22}    
 For $|u_1| \leq 1, |u_2| \leq 1$ and for $|x| \leq \rho - \eta$ for some $\eta > 0$
\begin{align*}
 \gamma_{k,h}^{(d)[2u_1]}(x,u_1,u_2) &= \mathcal{O}(y(|x|)^{k+d})\\
 \gamma_{k,h}^{(d)[u_1u_2]}(x,u_1,u_2) &= \mathcal{O}(y(|x|)^{k+h+2d-1})\\
 \gamma_{k,h}^{(d)[2u_2]}(x,u_1,u_2) &= \mathcal{O}(y(|x|)^{k+h+d})
\end{align*}
uniformly. Furthermore, for $x \in \Delta_\epsilon, u_1 \in \Xi_k$ and $u_2 \in \Xi_{k+h}$
\begin{align*}
  \gamma_{k,h}^{(d)[2]}(x,u_1,u_2) = \mathcal{O}((k+h)y(x)^{k+d}).
\end{align*}
\end{lem}

\begin{proof}
The proof of the first statement is identical to the one of Lemma \ref{l:g2}, as we can derive identical recursive relations for $\gamma_{k,h}^{(d)[2u_1]}(x,u_1,u_2)$ and $\gamma_{k,h}^{(d)[2u_2]}(x,u_1,u_2)$ and a similar one for $\gamma_{k,h}^{(d)[u_1u_2]}(x,u_1,u_2)$:
\begin{align*}
 \gamma_{k+1,h}^{(d)[u_1u_2]}(x,u_1,u_2)&= y_{k+1,h}^{(d)}(x,u_1,u_2) \left(\sum\limits_{i\geq1} \frac{\partial}{\partial u_1} y_{k,h}^{(d)}(x^i,u_1^i,u_2^i)u_1^{i-1}\right)\left(\sum\limits_{i\geq1} \frac{\partial}{\partial u_2} y_{k,h}^{(d)}(x^i,u_1^i,u_2^i)u_2^{i-1}\right) \\
&+y_{k+1,h}^{(d)}(x,u_1,u_2)\sum\limits_{i\geq1}i \gamma_{k,h}^{(d)[u_1u_2]}(x^i,u_1^i,u_2^i)u_1^{(i-1)}u_2^{(i-1)}.
\end{align*}
We then prove the statement inductively with the following initial conditions (note therefore that
$\frac{\partial}{\partial s_i}Z_{n}(s_1,\ldots,s_n)=\frac{1}{i}Z_{n-i}(s_1,\ldots,s_{n-i})$ (cf.
\cite[Chapter 2, p. 25]{Da}):
\begin{align*}
\gamma_{0,h}^{(d)[2u_1]}(x,u_1,u_2)&=0,\\
\gamma_{0,h}^{(d)[2u_2]}(x,u_1,u_2)&\leq\gamma_h^{(d)[2]}(x,u_2)=\mc{O}(y(x)^{h+d}),\\
\gamma_{0,h}^{(d)[u_1u_2]}(x,u_1,u_2)&=x\frac{\partial}{\partial u_2}Z_{d-1}(y^{(d)}_h(x,u_2),\ldots,y_h^{(d)}(x^{d-1},u_2^{d-1}))\\
&=\sum_{r=1}^{d-1}\frac{\partial}{\partial s_r}Z_{d-1}(s_1,\ldots,s_{d-1})\Bigg|_{s_i=y_h(x^i,u_2^i)}\gamma_h^{(d)}(x^r,u_2^r)ru_2^{r-1}\\
&=\sum_{r=1}^{d-1}\frac{1}{r}Z_{d-r-1}(s_1,\ldots,s_{d-1-r})\Bigg|_{s_i=y_h(x^i,u_2^i)}\gamma_h^{(d)}(x^r,u_2^r)ru_2^{r-1}\\
&=\mc{O}\left(Z_{d-2}(y_h^{(d)}(x,u_2),\ldots,y_h^{(d)}(x^{d-2},u_2^{d-2}))\gamma_h^{(d)}(x,u_2)\right)\\
&=\mc{O}(y(x)^{h+2d-2}).
\end{align*}

For the proof of the second statement we define for $\ell \leq k$
\begin{align*}
 D_{\ell,h}^{(d)} = \sup_{\substack{x\in\Delta_\epsilon\\u_1\in\Xi_k,u_2\in\Xi_{k+h}}} \left|\frac{\gamma_{\ell,h}^{(d)[2]}(x,u,v)}{y(x)^{\ell+d}}\right|,
\end{align*}
as in the proof of the second part of Lemma \ref{l:g2}. We use the estimate 
\begin{align}\label{eq:ykh} \left|y_{\ell+1,h}^{(d)}(x,u_1,u_2)\right|\leq|y(x)|\exp\left(C_{\ell,h}^{(d)[u_1]}\frac{\eta_1}{k}+C_{\ell,h}^{(d)[u_2]}\frac{\eta_2}{k}+\mc{O}(L^\ell)\right),
\end{align}
which we obtained in the proof of Lemma \ref{lem:gamma2}.

From the recursive description, we can derive the following by applying known bounds from Lemma \ref{lem:gamma2} and from the previous statement, and from \eqref{eq:mult} and \eqref{eq:ykh}, similar to the proof of Lemma \ref{l:g2}.
\begin{align*}
  D_{\ell+1,h}^{(d)}&= \sup_{\substack{x\in\Delta_\epsilon\\u_1\in\Xi_k,u_2\in\Xi_{k+h}}}\left|\frac{\gamma_{\ell+1,h}^{(d)[2u_1]}(x,u_1,u_2)+\gamma_{\ell+1,h}^{(d)[2u_2]}(x,u_1,u_2)+\gamma_{\ell+1,h}^{(d)[u_1u_2]}(x,u_1u_2)}{y(x)^{k+d+1}}\right|\\
&=\sup_{\substack{x\in\Delta_\epsilon\\u_1\in\Xi_k,u_2\in\Xi_{k+h}}}\left|\frac{y_{\ell+1,h}^{(d)}(x,u_1,u_2)}{y(x)}\right|\\
&\times\left|\frac{\sum_{r=1}^2(\sum_{i\geq1}\gamma_{\ell,h}^{(d)[u_r]}(x^i,u_1^i,u_2^i)u_r^{i-1})^2+\prod_{r=1}^2(\sum_{i\geq1}\gamma_{\ell,h}^{(d)[u_r]}(x^i,u_1^i,u_2^i)u_r^{i-1})}{y(x)^{\ell+d}}\right.\\
&+\left.\frac{\sum_{i\geq1}\gamma_{\ell,h}^{(d)[2]}(x,u_1,u_2)+\sum_{r=1}^2\sum_{i\geq 2}(i-1)\gamma_{\ell,h}^{(d)[u_r]}(x^i,u_1^i,u_2^i)u_r^{i-2}}{y(x)^{\ell+d}}\right|\\
&\leq\exp\left(C_{\ell,h}^{(d)[u_1]}\frac{\eta_1}{k}+C_{\ell,h}^{(d)[u_2]}\frac{\eta_2}{k}+\mc{O}(L^\ell)\right)\\
&\times\left((C_{\ell,h}^{(d)[u_1]})^2y(x)^{k+d}+C_{\ell,h}^{(d)[u_1]}C_{\ell,h}^{(d)[u_2]}y(x)^{k+h+d}+(C_{\ell,h}^{(d)[u_2]})^2y(x)^{k+2h+d}+D_{\ell,h}^{(d)}+\mc{O}(L^\ell)\right)\\ &\leq D_{\ell,h}^{(d)}\exp\left(C_{\ell,h}^{(d)[u_1]}\frac{\eta_1}{k}+C_{\ell,h}^{(d)[u_2]}\frac{\eta_2}{k}\right)(1+\mc{O}(L^\ell))\\
&+\exp\left(C_{\ell,h}^{(d)[u_1]}\frac{\eta_1}{k}+C_{\ell,h}^{(d)[u_2]}\frac{\eta_2}{k}\right)\left((C_{\ell,h}^{(d)[u_1]})^2+C_{\ell,h}^{(d)[u_1]}C_{\ell,h}^{(d)[u_2]}+(C_{\ell,h}^{(d)[u_2]})^2+\mc{O}(L^\ell))\right)\\
&=D_{\ell,h}^{(d)}\alpha_{\ell,h}^{(d)}+\beta_{\ell,h}^{(d)}
\end{align*}
As in the proof of Lemma \ref{l:g2} we get
\begin{align}
 D_{k,h}^{(d)}&\leq \alpha_{0,h}D_{0,h}+\sum_{j=0}^{k-1}\beta_{j,h}^{(d)}\prod_{i=j+1}^{k-1}\alpha_{j,h}^{(d)}\\
&=\mc{O}(k)+\mc{O}(D_{0,h}^{(d)}).
\end{align}
It remains to prove that $D_{0,h}^{(d)}=\mc{O}(h)$:
\begin{align*}
\gamma_{0,h}^{(d)[2u_1]}(x,u_1,u_2)&=0\\
\gamma_{0,h}^{(d)[2u_2]}(x,u_1,u_2)&=\gamma_{h}^{(d)[2]}(x,u_2)+\frac{\partial^2}{\partial u_2^2} Z_{d-1}(y_h^{(d)}(x,u_2),\ldots, y_h^{(d)}(x^{d-1},u_2^{d-1}))\\ 
&=\gamma_{h}^{(d)[2]}(x,u_2)+\sum\limits_{l=1}^{d-1} \sum\limits_{j=1}^{d-l-1} Z_{d-j-l-1}( y_h^{(d)})\gamma_h^{(d)}(x^l,v^l) v^{l-1}\\
&\phantom{=\gamma_{h}^{(d)[2]}(x,u_2)+}+\sum\limits_{l=1}^{d-1}Z_{d-l-1}(y_h^{(d)})\gamma_h^{(d)[2]}(x^l,v^l)u^{l-1}\\
&\phantom{=\gamma_{h}^{(d)[2]}(x,u_2)+}+\sum\limits_{l=1}^{d-1}Z_{d-l-1}(y_h^{(d)})(l-1)\gamma_h^{(d)}(x^l,v^l)u^{l-2}\\
&=\mathcal{O}(h y(x)^{h+d})+\mc{O}(y(x)^{h+2d-3}+\mc{O}(hy(x)^{h+2d-2}+\mc{O}(y(x)^{h+2d-2})\\
&=\mc{O}(h y(x)^{h+d})
\end{align*} 
and 
\begin{align*} 
\gamma_{0,h}^{(d)[u_1u_2]}(x,u_1,u_2)&=\frac{\partial}{\partial u_2} Z_{d-1}(y_h^{(d)}(x,u_2),\ldots, y_h^{(d)}(x^{d-1},u_2^{d-1}))=\mc{O}(y(x)^{h+2d-2}) \\
 D_{0,h}^{(d)}&=\sup_{\substack{x\in\Delta_\epsilon\\u_1\in\Xi_k,u_2\in\Xi_{k+h}}}\left|\frac{\gamma_{0,h}^{(d)[2u_1]}(x,u_1,u_2)+\gamma_{0,h}^{(d)[2u_2]}(x,u_1,u_2)+\gamma_{0,h}^{(d)[u_1u_2]}(x,u_1,u_2)}{y(x)^{d}}\right|\\
&\leq\sup_{\substack{x\in\Delta_\epsilon\\u_1\in\Xi_k,u_2\in\Xi_{k+h}}}\mc{O}(hy(x)^h+y(x)^{h+2d-2})=\mc{O}(h)
\end{align*}
\end{proof}

\rem Obviously, for $x\in\Delta_\epsilon$ and $u_1\in\Xi_k,u_2\in\Xi_{k+h}$ the statement also holds for the partial derivatives:
\begin{align*}
 \gamma_{k,h}^{(d)[2u_1]}(x,u_1,u_2)&=\mc{O}(k y(x)^{k+d})\\
\gamma_{k,h}^{(d)[2u_2]}(x,u_1,u_2)&=\mc{O}((k+h)y(x)^{k+h+d})\\
\gamma_{k,h}^{(d)[u_1u_2]}(x,u_1,u_2)&=\mc{O}((k+h)y(x)^{k+d})
\end{align*}

\begin{lem}\label{lem:Ckmult}
For $x \in \Delta_\epsilon, u_1\in\Xi_{k}$ and $u_2\in\Xi_{k+h}$, with the same constants as in the previous lemmata, we can approximate
 \begin{align*}
 w_{k,h}^{(d)}(x,u_1,u_2) =& C_k^{(d)}(x)(u_1-1)y(x)^{k+d} + C_{k+h}^{(d)}(x)(u_2-1)y(x)^{k+h+d}\\
&+\mathcal{O}((k+h)y(x)^{k+d}(|u_1-1|^2 + |u_2-1|^2)).
\end{align*}
Furthermore
\begin{align*}
 \Sigma_{k,h}^{(d)}(x,u_1,u_2) =&\tilde{C}_k^{(d)}(x^2)(u_1-1)y(x^2)^{k+d} + \tilde{C}_{k+h}^{(d)} (x)(u_2-1)y(x^2)^{k+h+d}\\
&+\mathcal{O}(y(|x|^2)^{k}|u_1-1|^2 + y(|x|^2)^{k+h}|u_2-1|^2)
\end{align*}
\end{lem}
\begin{proof}
For the first statement, we expand $w_{k,h}^{(d)}(x,u_1,u_2)$ into a Taylor polynomial of degree $2$ around $u_1=u_2=1$ and obtain
\begin{align*}
 w_{k,h}^{(d)}(x,u_1,u_2)&=\gamma_k^{(d)}(x)(u_1-1)+\gamma_{k+h}^{(d)}(x)(u_2-1)+R\\
\textrm{with }\quad |R|&\leq \frac{1}{2}\left(\gamma_{k,h}^{(d)[2u_1]}(x,1+\vartheta_1(u_1-1),1+\vartheta_2(u_2-1))(u_1-1)^2\right.\\
&\phantom{\qquad + \frac{1}{2}{}+}+2\gamma_{k,h}^{(d)[u_1u_2]}(x,1+\vartheta_1(u_1-1),1+\vartheta_2(u_2-1))(u_1-1)(u_2-1)\\
&\phantom{\qquad + \frac{1}{2}{}+}\left.+\gamma_{k,h}^{(d)[2u_2]}(x,1+\vartheta_1(u_1-1),1+\vartheta_2(u_2-1))(u_2-1)^2\right).\end{align*}
Hence,
\begin{align*} w_{k,h}^{(d)}(x,u_1,u_2)&=C_k^{(d)}(x)(u_1-1)y(x)^{k+d} + C_{k+h}^{(d)}(x)(u_2-1)y(x)^{k+h+d}\\
&+\mathcal{O}((k+h)y(x)^{k+d}(|u_1-1|^2 + |u_2-1|^2)),
\end{align*}
where we can neglect the mixed derivatives as either $(u_1-1)^2$ or $(u_2-1)^2$ will determine the dominant part. For the second part we again use a Taylor polynomial, usin the fact that $|x^i|<\rho<1$ and $|u_r^i-1|\leq i|u_r-1|$ for $i >2, r=1,2$, hence the result follows immediately. 
\end{proof}
Note that the terms $u_1-1$ and $u_2-1$ are asymptotically proportional:  $\frac{u_2}{u_1} = \frac{e^{\frac{it_1}{\sqrt{n}}}-1}{e^{\frac{it_2}{\sqrt{n}}}-1}\sim\frac{t_2}{t_1}$, and that $y(x^2)^{k+h+d}$ is exponentially smaller than $y(x^2)^{k+d}$ as $h = \xi \sqrt{n}$.

\begin{lem}
 There exist constants such that $w_{k,h}^{(d)}=w_{k,h}^{(d)}(x,u_1,u_2)$ is given by 
\begin{align*}
 w_{k,h}^{(d)}= \frac{w_{0,h}^{(d)}y(x)^k}{1-f_k^{(d)} -\frac{w_{0,h}^{(d)}}{2}\frac{1-y(x)^k}{1-y(x)} + \mathcal{O}(|u_1-1| + |u_2-1|}
\end{align*}

for $u_1\in\Xi_k, u_2\in\Xi_{k+h}$ and $x\in\Delta_\epsilon$, where $f_k^{(d)}$ is given by:

\begin{align}
 f_k^{(d)}(x,u_1,u_2) = w_{0,h}^{(d)}(x,u_1,u_2)\sum\limits_{l=0}^{k-1} \frac{\Sigma_{l,h}(x,u_1,u_2)y(x)^{l+1}}{w_{l,h}^{(d)}(x,u_1,u_2)w_{l+1,h}(x,u_1,u_2)}.
\end{align}
\end{lem}
\begin{proof}
We can argue similarly as in the proof of Lemma \ref{lem:wkd} and derive the recursive description
\begin{align*}
 w_{k+1,h}^{(d)}=y w_{k,h}^{(d)} \left(1+\frac{\Sigma_{k,h}^{(d)}}{w_{k,h}^{(d)}}\right)\left(1+\frac{w_{k,h}^{(d)}}{2} + \mathcal{O}(w_{k,h}^{2(d)}) + \mathcal{O}(\Sigma_{k,h}^{(d)})\right),
\end{align*}

and equivalently
\begin{align*}
\frac{y}{w_{k+1,h}^{(d)}}\cdot\left(1+\frac{\Sigma_{k,h}^{(d)}}{w_{k,h}^{(d)}}\right)=\frac{1}{w_{k,h}^{(d)}}-\frac{1}{2} + \mathcal{O}(w_{k,h}^{(d)}) + \mathcal{O}\left(\frac{\Sigma_{k,h}^{(d)}}{w_{k,h}^{(d)}}\right).
\end{align*}

Further we get
\begin{align*}
\frac{y^{k+1}}{w_{k+1,h}^{(d)}}=\frac{y^{k}}{w_{k,h}^{(d)}}-\frac{\Sigma_{k,h}^{(d)}\cdot y(x)^{k+1}}{w_{k,h}^{(d)}w_{k+1,h}^{(d)}}-\frac{1}{2} y(x)^k+ \mathcal{O}(w_{k,h}^{(d)}y^k) + \mathcal{O}\left(\frac{\Sigma_{k,h}^{(d)}\cdot y^k}{w_{k,h}^{(d)}}\right).
\end{align*}

Solving the recurrence leads to 
\begin{align*}
 \frac{y^{k}}{w_{k,h}^{(d)}}&= \frac{1}{w_{0,h}^{(d)}} - \sum\limits_{l=0}^{k-1} \frac{\Sigma_{l,h}^{(d)}\cdot y(x)^{l+1}}{w_{l,h}^{(d)}w_{l+1,h}^{(d)}} - \frac{1}{2}\frac{1-y^k}{1-y} +  \mathcal{O}\big(\sum\limits_{l=0}^{k-1}\!\!\!\!\!\underbrace{w_{\ell,h}^{(d)}y^\ell}_{=\mathcal{O}(w_{0,h}y^{2\ell})}\!\!\!\!\!\big) + \mathcal{O}\bigg(\sum\limits_{l=0}^{k-1}\underbrace{\frac{\Sigma_{l,h}^{(d)}\cdot y^l}{w_{l,h}^{(d)}}}_{=\mathcal{O}(L^l)}\bigg)\\
&=\frac{1}{w_{0,h}^{(d)}}\left(1-\underbrace{w_{0,h}^{(d)}\sum\limits_{l=0}^{k-1} \frac{\Sigma_{l,h}^{(d)}y(x)^{l+1}}{w_{l,h}^{(d)}w_{l+1,h}^{(d)}}}_{=:f_k^{(d)}(x,u_1,u_2)} - \frac{w_{0,h}^{(d)}}{2}\frac{1-y^k}{1-y} +  \mathcal{O}(w_{0,h}^{2(d)}\frac{1-y^{2k}}{1-y^2}) + \mathcal{O}(w_{0,h}^{(d)}\frac{1-L^k}{1-L})\right).
\end{align*}

Observe that
\begin{align*}
 w_{0,h}^{(d)} &= y_{h}^{(d)}(x,v) +(u-1)xZ(S_{d-1},y_{h}^{(d)}(x,v), \ldots,y_{h}^{(d)}(x^i,v^i)) - y(x)\\
&=w_h^{(d)}+(u-1)xZ(S_{d-1},y_{h}^{(d)}(x,v), \ldots,y_{h}^{(d)}(x^i,v^i))\\
&=C_h^{(d)}(x)(u_2-1)y(x)^{h+d}+(u_1-1)y(x)^d=\mc{O}(|u_1-1|+|u_2-1|)
\end{align*}
\end{proof}

In the following, we denote by $U:=(u_1-1)y(x)^d$ and $W:=w_h^{(d)}(x,u_2)$. Note that $w_{0,h}\sim U+W$. By Lemma \ref{lem:Ckmult} we obtain for $w_{\ell,h}^{(d)}$ (note that $C_{\ell+h}^{(d)}(x)=C_h^{(d)}(x)(1+L^\ell)$)
\begin{align*}
 w_{\ell,h}^{(d)}(x,u_1,u_2)&=C_\ell^{(d)}(x)(u_1-1)y(x)^{\ell+d}+C_{\ell+h}^{(d)}(x)(u_2-1)y(x)^{\ell+h+d}\\
&\qquad +\mc{O}\left((\ell+h)y(x)^{\ell}(|u_1-1|^2+|u_2-1|^2)\right)\\
&=y(x)^{\ell}\left(C_\ell^{(d)}U + C_{h}^{(d)}(x)(u_2-1)y(x)^{h+d}\right.\\
&\qquad \left.+\mc{O}\left((\ell+h)(|u_1-1|^2+|u_2-1|^2)\right)\right)\\
&=y(x)^\ell(C_\ell^{(d)}(x)U+W)(1+\mc{O}(h(|u_1-1|+|u_2-1|))
\end{align*}

We use the representation \eqref{eq:tildeC} for $\tilde{C}_\ell(x)$, which we already used in the proof of Lemma \ref{lem:wkd}, and omit all error terms, to obtain by telescoping
\begin{align*}
 f_k^{(d)}&(x,u_1,u_2) = w_{0,h}^{(d)}(x,u_1,u_2)\sum_{\ell=0}^{k-1}\frac{y(x)^{\ell+1}\left(\tilde{C}_\ell^{(d)}(x)(u_1-1)y(x^2)^{k+d}\right)}{y(x)^\ell(C_\ell^{(d)}(x)U+W)y(x)^{\ell+1}(C_{\ell+1}^{(d)}(x)U+W)}\\
&= Uw_{0,h}^{(d)}(x,u_1,u_2) \sum\limits_{l=0}^{k-1} \frac{\tilde{C}_l^{(d)}(x)\left(\frac{y(x^2)}{y(x)}\right)^{l+d}}{(C_l^{(d)}(x)U+W)(C_{l+1}^{(d)}(x)U+W)}\\
&= w_{0,h}^{(d)}(x,u_1,u_2)\sum\limits_{l=0}^{k-1} \frac{(C_{l+1}^{(d)}(x)U+W) - (C_l^{(d)}(x)U+W)}{(C_l^{(d)}(x)U+W)(C_{l+1}^{(d)}(x)U+W)}\\
&=w_{0,h}^{(d)}(x,u_1,u_2)\left(\frac{1}{C_0^{(d)}(x)U + W} - \frac{1}{C_k^{(d)}(x)U+ W}\right).
\end{align*}
As we know from \eqref{eq:C0}, $C_0^{(d)} = \frac{x Z(S_{d-1})}{y(x)^d} =\mc{O}(1)$ near $u=1$ (analytic), hence
\begin{align*}
f_k^{(d)}(x,u,v)  \sim \left(1-\frac{(U+W)}{C_k^{(d)}(x)U+ W}\right).
\end{align*}

Using
\begin{align*}
C_k^{(d)}(x) &\sim C \rho^d,\\
(u_1-1)&\sim \frac{it_1}{\sqrt{n}},\\
y(x)^k &\sim e^{-\kappa b \sqrt{-\rho s}},\\
1-y(x)&\sim b \sqrt{\frac{\rho s}{n}},
\end{align*}
and $w_{0,h}^{(d)}(x,u_1,u_2)\sim W+U$, we can derive  
\begin{align*}
 w_{k,h}^{(d)} &= \frac{w_{0,h}^{(d)}y(x)^k}{\frac{((U+W)}{C \rho^d U+W} - \frac{w_{0,h}^{(d)}}{2} \frac{1-y(x)^k}{1-y(x)}}\\
&=\frac{(C\rho^d\frac{it_1}{\sqrt{n}} + w_h^{(d)}(x,v))\sqrt{-s} e^{-\kappa b \sqrt{-\rho s}}}{\sqrt{-s} - ((C\rho^d\frac{it_1}{\sqrt{n}} + w_h^{(d)}(x,v))\frac{1}{2b\sqrt{\frac{\rho}{n}}}(1-e^{-\kappa b \sqrt{-\rho s}})}\\
&=\frac{C\rho^d}{\sqrt{n}}\frac{(it_1 + w_h^{(d)}(x,v))\sqrt{-s} e^{-\frac{\kappa}{2} b \sqrt{-\rho s}}}{\sqrt{-s}e^{\frac{\kappa}{2} b \sqrt{-\rho s}}- ((C\rho^d\frac{it_1}{\sqrt{n}} + w_h^{(d)}(x,v))\frac{1}{b\sqrt{\frac{\rho}{n}}}(\sinh(\frac{\kappa}{2} b \sqrt{-\rho s}))}
\end{align*}
and with the expansion \eqref{eq:wkd} of $w_h^{(d)}(x,u_2)$ with $u_2=e^{\frac{it_2}{\sqrt{n}}}$ and $h=\xi\sqrt{n}$, given by Theorem \ref{th:exp}, we can derive the expansion given in Theorem \ref{th:expmult}. 
\end{proof}

\begin{proof}[Proof of Theorem \ref{th:3}]
The characteristic function of the two dimensional distribution is given by

\begin{align}\label{eq:phi2}
\phi_{k,k+h,n}^{(d)}(t_1,t_2) &= \frac{1}{y_n} [x^n] y_{k,h}^{(d)}(x,e^{\frac{it_1}{\sqrt{n}}},e^{\frac{it_2}{\sqrt{n}}}) \nonumber\\
&=\frac{1}{2 \pi i y_n} \int_\Gamma y_{k,h}^{(d)}(x,e^{\frac{it_1}{\sqrt{n}}},e^{\frac{it_2}{\sqrt{n}}}) \frac{dx}{x^{n+1}}\nonumber\\
&=1+\frac{1}{2 \pi i y_n} \int_\Gamma w_{k,h}^{(d)}(x,e^{\frac{it_1}{\sqrt{n}}},e^{\frac{it_2}{\sqrt{n}}}) \frac{dx}{x^{n+1}}.
\end{align}

We use the same contour as in the one dimensional case. With the same arguments, only integration over $\gamma$ contributes to the result, hence the representation \eqref{eq:wkdmult} of $w_{k,h}^{(d)}$ leads to:

\begin{align*}
&\phi_{k,h,n}^{(d)}(t_1,t_2) 
=1+\frac{\sqrt{2}}{\sqrt{\pi}i}\\ &\times\int\limits_{1-i \log^2 n}^{1+i \log^2 n} \frac{\frac{C \rho^d}{b\sqrt{2\rho}}i\left( t_1 +\frac{t_2 \sqrt{-s} e^{(-\frac{1}{2}\xi b \sqrt{-\rho s })}}{\sqrt{-s}e^{(\frac{1}{2}\xi b \sqrt{-\rho s })}-\frac{it_2 C \rho^d}{\sqrt{\rho}b}\sinh{(\frac{1}{2}\xi b \sqrt{-\rho s})})}\right)\sqrt{-s} e^{(-\frac{1}{2}\kappa b \sqrt{-\rho s })}}{\sqrt{-s}e^{(-\frac{\kappa}{2} b \sqrt{-\rho s })}-i\frac{C\rho^d}{2b\sqrt{\rho}}\left(t_1 +\frac{t_2 \sqrt{-s} e^{(-\frac{1}{2}\xi b \sqrt{-\rho s })}}{\sqrt{-s}e^{(\frac{1}{2}\xi b \sqrt{-\rho s })}-\frac{it_2 C \rho^d}{\sqrt{\rho}b}\sinh{(\frac{1}{2}\xi b \sqrt{-\rho s})})}\right)(\sinh{(\frac{\kappa}{2} b \sqrt{-\rho s})})} \\
&\xrightarrow{n\to\infty}\psi_{\kappa,\xi}(t_1,t_2)
\end{align*} 
where $\psi_{\kappa,\xi}(t_1,t_2)$ is the characteristic function of the random variable
$\frac{C_d\rho^d}{\sqrt{2\rho}b} \(l\(\frac{b\sqrt{\rho}}{2\sqrt{2}}\kappa,
\frac{b\sqrt{\rho}}{2\sqrt{2}}\xi\)\)$.
\end{proof}

\section{Tightness}\label{sec:6}

We must show the estimate \eqref{tightness} in Theorem~\ref{thm:tightness}. The fourth moment in
\eqref{tightness} can be obtained by by applying the operator
$\(u\pdiff{}u{}\)^4$ and setting $u=1$ afterwards. Hence, using the transfer lemma of Flajolet and
Odlyzko \cite{FO90} it turns out that it suffices to show that
\begin{equation}\label{eqtoshow}
\U{\left( \pdiff{}u{}+7\pdiff{}u2+6\pdiff{}u3+\pdiff{}u4 \right)
\tilde y_{r,h}\(x,u,u^{-1}\)} = \Ord{ \frac{h^2}{1-|y(x)|} }
\end{equation}
uniformly for $x\in \Delta$ and $h\ge 1$ (see \cite[pp.2046]{DrGi} for the detailed argument).

Set
\[
\gamma_k^{(d)[j]}(x) = \U{\frac{\partial^j y_k(x,u)}{\partial u^j} }
\qquad \mbox{and}\qquad
\gamma_{k,h}^{(d)[j]}(x) = \U{\frac{\partial^j \tilde y_{r,h}\(x,u,\frac 1u\)}
{\partial u^j}}.
\]

The left-hand side of \eqref{eqtoshow} is a linear combination of $\gamma_{k,h}^{(d)[j]}(x)$ for
$j=1,2,3,4.$ Therefore we need bound for those quantities. We will derive upper bounds for all $j$
since this more general result is easier to achieve. We start with an auxiliary result. 

\bl\label{onelevel}
Let $j$ be a positive integer. 
Under the assumption that for all $i\le j$ the bound 
$\gamma_k^{(d)[i]}(x) =\Ord{|x/\rho|^k}$ holds uniformly for $|x|\le \rho$, we have $\U{\(\pdiff{}u{}\)^j \d\Sigma}=\Ord{L^k}$ for some positive constant $L<1$.
\el

\bpf
By Fa\`a di Bruno's formula we have 
\begin{align*} 
\U{\(\pdiff{}u{}\)^j \d\Sigma} &= \sum_{i\ge 2}\frac 1i \U{\(\pdiff{}u{}\)^j \d w (x^i,u^i)} \\
&= \sum_{i\ge 2}\frac 1i 
\sum_{\sum_{m=1}^j m \nu_m=j} \frac{j!}{\nu_1!\cdots \nu_j!} \gamma_k^{(d)[\nu_1+\cdots+\nu_m]}
(x^i,1) \prod_{\lambda=1}^j \(\frac1{\lambda!}\U{\(\pdiff{}u{}\)^\lambda u^i}\)^{\nu_\lambda}.
\end{align*} 
By our assumption we have $\gamma_k^{(d)[\nu_1+\cdots+\nu_m]}(x^i,1)=\Ord{|x^i/\rho|^k}$. The
product is essentially a derivative of order $j=\sum \lambda \nu_\lambda$ of $u^i$ and can
therefore be estimate by $\Ord{i^j}$. So the whole expression is bounded by a constant times 
$\sum_{i\ge 2} i^{j-1} x^{ik}/\rho^k=\Ord{(|x^2|/\rho)^ki}=\Ord{(\rho+\eps)^k}$. Hence we can
choose $L=\rho+\eps$ to get the desired bound. 
\epf

Exactly the same line of arguments yield the analogous result for two levels:

\bl\label{morelevels}
Let $j$ be a positive integer and set 
\begin{equation} \label{sigmaschlange}
\tilde\Sigma_{k,h}^{(d)}= \sum_{i\ge 2}\frac 1i w_{k,h}^{(d)}(x^i,u^i,u^{-i}). 
\end{equation} 
Under the assumption that for all $i\le j$ the bound 
$\gamma_{k,h}^{(d)[i]}(x) =\Ord{|x/\rho|^k}$ holds uniformly for $|x|\le \rho$ we have 
$\U{\(\pdiff{}u{}\)^j \tilde\Sigma_{k,h}^{(d)}}=\Ord{L^k}$ for some positive constant $L<1$.
\el

With the auxiliary lemmas we can easily get bounds for $\gamma_k^{(d)[j]}(x)$ and
$\gamma_{k,h}^{(d)[j]}(x)$. 

\bl
We have 
\begin{equation} \label{g:1}
\gamma_k^{(d)[1]}(x)=\left\{ \begin{array}{ll}
\Ord{1} & \mbox{uniformly for $x\in \Delta$}, \\
 \Ord{|x/\rho|^k} & \mbox{uniformly for $|x|\le \rho$}
\end{array}\right.
\end{equation}
and for $\ell>1$
\begin{equation} \label{g:2}
\gamma_k^{(d)[\ell]}(x)=\left\{ \begin{array}{ll}
\Ord{\min\(k^{\ell-1}, \frac {k^{\ell-2}}{1-|y(x)|}\)} & \mbox{uniformly for $x\in \Delta$}, \\
 \Ord{|x/\rho|^k} & \mbox{uniformly for $|x|\le \rho$}
\end{array}\right.
\end{equation}
\el

\bpf
The estimate \eqref{g:1} essentially follows from Lemma~\ref{l:g}: We know 
$\gamma_k^{(d)[1]}(x)=C^{(d)}(x)y(x)^{k+d}(1+\Ord{L^k})=\Ord{1}$ with some $0<L<1$ and $|y(x)|\le
1$ and this is sufficient to show the first part of \eqref{g:1}. 

If $|x|\le \rho$ we can exploit the convexity of $y(x)$ on the positive real line to get
$|y(x)|\le |x/\rho|$. This implies $\gamma_k^{(d)[1]}(x)=\Ord{|x/\rho|^{k+d}}$, an even better
bound than stated in the assertion. 

Now we are left with the induction step. Again we use Fa\`a di Bruno's formula and the fact 
that $\d w(x,1)=\d\Sigma(x,1)=0$ and obtain 
\begin{align} 
\gamma_k^{(d)[\ell]}(x)&=\U{\pdiff{}u{} \d w(x,u)}=y(x) \U{\pdiff{}u{}
\exp\(w_{k-1}^{(d)}(x,u)+\Sigma_{k-1}^{(d)}(x,u)\)} \nonumber \\
&=
\sum_{\sum_{i=1}^\ell i\lambda_i=\ell} \frac{\ell!}{\lambda_1!\cdots \lambda_\ell!} 
\prod_{j=1}^{\ell-1}\(\frac1{j!} 
\U{\(\pdiff{}u{}\)^j \(w_{k-1}^{(d)}(x,u)+\Sigma_{k-1}^{(d)}(x,u)\)}\)^{\lambda_j} \nonumber \\
&\quad +y(x)\U{\(\pdiff{}u{}\)^\ell \(w_{k-1}^{(d)}(x,u)+\Sigma_{k-1}^{(d)}(x,u)\)} \nonumber \\
&=\sum_{\sum_{i=1}^\ell i\lambda_i=\ell} \frac{\ell!}{\lambda_1!\cdots \lambda_\ell!}
\prod_{j=1}^{\ell-1}\(\frac{\gamma_{k-1}^{(d)[j]}(x)+\Gamma_{k-1}^{(d)[j]}(x)}{j!}
\)^{\lambda_j} + y(x)(\gamma_{k-1}^{(d)[\ell]}(x)+\Gamma_{k-1}^{(d)[\ell]}) \label{FdB}
\end{align} 
where $\Gamma_{k-1}^{(d)[\ell]}=\U{\(\pdiff{}u{}\)^\ell\Sigma_{k-1}^{(d)}(x,u)}$. 

Consider the case $|x|\le \rho$. The product comprises only terms which essentially have the form
$\gamma_{k-1}^{(d)[j]}(x)+\Gamma_{k-1}^{(d)[j]}(x)$ with $j<\ell$. Thus by the induction 
hypothesis, $\gamma_{k-1}^{(d)[j]}(x)=\Ord{|x/\rho|^j}$. Therefore the assumption of
Lemma~\ref{onelevel} is satisfied and the terms as a whole are bounded by $C\cdot |x/\rho|^j$. 
Since $\sum_{j=1}^{\ell-1} j\lambda_j=\ell$ we get 
$$
\gamma_k^{(d)[\ell]}(x)= y(x)(\gamma_{k-1}^{(d)[\ell]}(x)+\Gamma_{k-1}^{(d)[\ell]}+
\Ord{|x/\rho|^\ell}. 
$$
So we finally get the desired estimate by induction on $k$ and Lemma~\ref{onelevel}, starting with 
\begin{equation} \label{a_null}
\gamma_0^{(d)[\ell]}=\begin{cases}
xZ_{d-1}(y(x),y(x^2),\dots,y(x^{d-1})) & \mbox{ if } \ell=1, \\
0 & \mbox{ else.}
\end{cases}
\end{equation} 

Now let us turn to general $x\in\Delta$. Like before we focus first on the terms of the product of
\eqref{FdB}. Again the induction hypothesis guarantees that the assumption of Lemma~\ref{onelevel}
is satisfied and so $\Gamma_{k-1}^{(d)[j]}(x)$ is exponentially small. Furthermore, the induction
hypothesis implies $\gamma_{k-1}^{(d)[j]}(x)=\Ord{\min\(k^{j-1}, \frac {k^{j-2}}{1-|y(x)|}\)}$.
Since $\gamma_{k-1}^{(d)[1]}(x)=\Ord{1}$ this implies
\begin{equation} \label{prodbound}
\prod_{j=1}^{\ell-1}\(\frac{\gamma_{k-1}^{(d)[j]}(x)+\Gamma_{k-1}^{(d)[j]}(x)}{j!}
\)^{\lambda_j}
=\Ord{\min\(k^{\sum_{j=1}^{\ell-1}(j-1)\lambda_j}, 
\frac
{k^{\sum_{j=2}^{\ell-1}(j-2)\lambda_j}}{(1-|y(x)|)^{\sum_{j=2}^{\ell-1}\lambda_j}
}\)}.
\end{equation} 
Set 
$$
A=k^{\sum_{j=1}^{\ell-1}(j-1)\lambda_j} \mbox{ and }
B=\frac{k^{\sum_{j=2}^{\ell-1}(j-2)\lambda_j}}{(1-|y(x)|)^{\sum_{j=2}^{\ell-1}\lambda_j}}.
$$
Note that $\sum_{j=1}^{\ell-1}(j-1)\lambda_j=\ell-\sum_{j=1}^{\ell-1}\lambda_j$. Since the term
correspondung to $\lambda_\ell=0$ in Fa\`a di Bruno formula is the very last term in \eqref{FdB},
we must have $\sum_{j=1}^{\ell-1}\lambda_j\ge 2$ and thus $A\le k^{\ell-2}$. Moreover, we have 
$$
\sum_{j=2}^{\ell-1}(j-2)\lambda_j=\ell-k_1-2\sum_{j=2}^{\ell-1} k_j\le \ell-3
$$
since $\sum_{j=2}^{\ell-1}\lambda_j<2$ implies $k_1>0$ and, in particular,
$\sum_{j=2}^{\ell-1}\lambda_j=2$ implies $k_1=\ell$.
Therefore 
$$
B\le \frac{k^{\ell-3}}{(1-|y(x)|)^{\sum_{j=2}^{\ell-1}\lambda_j}}.
$$
We want to show that 
\begin{equation} \label{desiredbound}
B\le \frac{k^{\ell-3}}{1-|y(x)|}. 
\end{equation} 
Set 
$A_j=k^{j-1}$ and $B_j=k^{j-2}/(1-|y(x)|)$. Note that $B_j<A_j$ is equivalent to $1/(1-|y(x)|)<k$.
Therefore the term $B$ appears in our upper bound \eqref{prodbound} if and only if $x$ is such
that $1/(1-|y(x)|)<k$. But this implies that $B\le \frac{k^{\ell-3}}{1-|y(x)|}$ as desired, because
the desired bound is equivalent to 
$$
\frac1{1-|y(x)|}<k^{\(-2+2\sum_{j=1}^{\ell-1}\lambda_j-\lambda_1\)/\(-1+\sum_{j=2}^{\ell-1}\lambda_j\)}=
k^{1+\alpha} 
$$
where $\alpha=\(\sum_{j=1}^{\ell-1}\lambda_j-1\)\left/\(-1+\sum_{j=2}^{\ell-1}\lambda_j\)\right.
>0$ and hence the desired bound \eqref{desiredbound}
is weaker than $1/(1-|y(x)|)<k$. 

Now let $a_k:=\gamma_k^{(d)[\ell]}(x)$. We have shown so far that 
$$
a_k=y(x)a_{k-1} +y(x)A_k \mbox{ with } A_k=\Ord{\min\(k^{\ell-2}, \frac {k^{\ell-3}}{1-|y(x)|}\)}
$$
and we know that $a_0$ is given by \eqref{a_null}. Solving this recurrence relation gives 
$$
a_k=y(x)^ka_0 +\Ord{\btr{y(x)\frac{1-y(x)^k}{1-y(x)}}\cdot \min\(k^{\ell-2}, \frac
{k^{\ell-3}}{1-|y(x)|}\)}.
$$
Since $\btr{y(x)\frac{1-y(x)^k}{1-y(x)}}\le k$ and $a_0y(x)^k=\Ord{y(x)^{k+d}}=\Ord{1}$ we get the
desired bound for $a_k$ and the proof is complete.
\epf

\bl \label{lastlemma}
We have 
\begin{equation} \label{g:3}
\gamma_{k,h}^{(d)[1]}(x)=\left\{ \begin{array}{ll}
\Ord{1} & \mbox{uniformly for $x\in \Delta$}, \\
 \Ord{|x/\rho|^k} & \mbox{uniformly for $|x|\le \rho$,}
\end{array}\right.
\end{equation}
and for $\ell>1$
$$ 
\gamma_{k,h}^{(d)[\ell]}(x)=\left\{ \begin{array}{ll}
\Ord{\min\(k^{\ell-1}, \frac {k^{\ell-2}}{1-|y(x)|}\)} & \mbox{uniformly for $x\in \Delta$}, \\
 \Ord{|x/\rho|^k} & \mbox{uniformly for $|x|\le \rho$} 
\end{array}\right. 
$$
\el

\bpf
As the bounds are precisely the same as in the previous lemma, the induction step works in an analogous
way, using Lemma~\ref{morelevels} instead of Lemma~\ref{onelevel}. Thus we only have to show the initial step of the induction, Eq.~\eqref{g:3}. 

We can use a similar reasoning as in the proof of \cite[Lemma~7]{DrGi}. Indeed, by applying the 
operator $\U{\pdiff{}u{}\; \cdot\;}$ to \eqref{eq:rec2dim} we obtain the recurrence relation 
\[
\gamma_{k+1,h}^{(d)[1]}(x) = y(x) \sum_{i\ge 1} \gamma_{k,h}^{(d)[1]}(x^i)
\]
with initial value $\gamma_{0,h}^{(d)[1]}(x) = xZ_{d-1}(y(x),y(x^2),\dots,y(x^{d-1})) -
\gamma_h(x)$. Induction on $k$ gives the representation 
$\gamma_{k,h}^{(d)[1]}(x) = \gamma_k^{(d)[1]}(x) - \gamma_{k+h}^{(d)[1]}(x)$ and using
$\gamma_k^{(d)[1]}(x)=C^{(d)}(x)y(x)^{k+d}(1+\Ord{L^k})$ from Lemma~\ref{l:g} we obtain 
$$
\gamma_k^{(d)[1]}(x)=\Ord{\sup_{x\in \Delta} |y(x)^{k+d}(1-y(x)^h)| + L^k}=\Ord{\frac{h}{k+d+h}}.
$$
Since the last term is bounded, the proof is complete.
\epf

Now, applying Lemma~\ref{lastlemma} to \eqref{eqtoshow} proves tightness and
Theorem~\ref{thm:tightness} after all.

\section{The Joint Distribution of Two Degrees}
We want to gain knowledge on the correlation between two different degrees $d_1, d_2$ in a certain level $k=\kappa\sqrt{n}$. 

\subsection{The covariance $\mathbb{C}\textrm{ov}(X_n^{(d_1)}(k),X_n^{(d_2)}(k))$}

The covariance \index{covariance} of two random variables $X$ and $Y$ is given by 
\[\mathbb{C}\mathrm{ov}(X,Y)=\mathbb{E}(XY)-\mathbb{E}(X)\mathbb{E}(Y).\]
In this section, we will prove the result on the covariance function of the two random variables $X_n^{(d_1)}(k)$ and $X_n^{(d_2)}(k)$, counting the vertices of degree $d_1$ and $d_2$, respectively, on level $k$, given in Proposition~\ref{prop:covariance}.

To compute $\mathbb{E}\left(X_n^{(d_1)}(k)\cdot X_n^{(d_2)}(k))\right)$ we need to determine
\[\frac{1}{y_n}[x^n]\left[\frac{\partial^2}{\partial u\partial v}y_k(x,u,v)\right]_{u=v=1},\]
while $\mathbb{E}(X_n^{(d_1)}(k))$ and $\mathbb{E}(X_n^{(d_2)}(k))$ are given by
\[\frac{1}{y_n}[x^n]\gamma_k^{(d_1)}(x) \textrm{ and } \frac{1}{y_n}[x^n]\gamma_k^{(d_2)}(x),\textrm{respectively.}\]

We use the notations 
\[
\gamma_k^{(d_1)}(x,u,v)=\diff{u}y_k(x,u,v),\quad 
\gamma_k^{(d_2)}(x,u,v)=\diff{v}y_k(x,u,v),\]
as well as (recall~\ref{eq:tildeg}
\[
\tilde{\gamma_k}^{(d_1,d_2)}(x,u,v)=\frac{\partial^2}{\partial uv}y_k(x,u,v)
\textrm{ and
} 
\tilde{\gamma_k}^{(d_1,d_2)}(x)=\tilde{\gamma_k}^{(d_1,d_2)}(x,1,1).\]

\begin{lem}\label{lem:covrep}
 There exist constants $\epsilon$ and $\theta$ such that for $z\in\Delta(\eta,\theta)$
\[\tilde{\gamma}_k^{(d_1,d_2)}(x)=C^{(d_1)}(x)\cdot C^{(d_2)}(x)y(x)^{k+d_1+d_2}\sum_{\ell=0}^{k-1}(y(x)^\ell+\mc{O}(L^\ell)),\]
where $C^{(d_1)}(x)$ and $C^{(d_2)}(x)$ are given in Lemma~\ref{l:g}.
\end{lem}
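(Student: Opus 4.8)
The plan is to differentiate the defining recursion \eqref{eq:rec_var} with respect to $u$ and $v$, evaluate at $u=v=1$, and solve the resulting first-order linear recurrence for $\tilde\gamma_k^{(d_1,d_2)}(x)$, reading off the constants from Lemma~\ref{l:g}.

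First I would record the initial data and the link to the one-parameter functions. Setting $v=1$ in \eqref{eq:rec_var} reproduces \eqref{eq:rec}, so $y_k^{(d_1,d_2)}(x,u,1)=y_k^{(d_1)}(x,u)$ and symmetrically $y_k^{(d_1,d_2)}(x,1,v)=y_k^{(d_2)}(x,v)$; hence $\frac{\partial}{\partial u}y_k^{(d_1,d_2)}(x,u,v)\big|_{u=v=1}=\gamma_k^{(d_1)}(x)=C_k^{(d_1)}(x)y(x)^{k+d_1}$ by Lemma~\ref{l:g}, and likewise for $d_2$. Differentiating the second line of \eqref{eq:rec_var} in $u$, then in $v$, and putting $u=v=1$ (the chain rule produces a factor $i$ from the argument $v^i$ in the inner derivative) yields
\[
\tilde\gamma_{k+1}^{(d_1,d_2)}(x)=\gamma_{k+1}^{(d_2)}(x)\sum_{i\ge1}\gamma_k^{(d_1)}(x^i)+y(x)\sum_{i\ge1}i\,\tilde\gamma_k^{(d_1,d_2)}(x^i),
\]
with initial value $\tilde\gamma_0^{(d_1,d_2)}(x)=0$, because $y_0^{(d_1,d_2)}$ is affine in $(u,v)$ with no mixed term.

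Next I would peel off the $i=1$ summands. By Corollary~\ref{cor:sumgamma} the tail $\sum_{i\ge2}\gamma_k^{(d_1)}(x^i)$ is $\Ord{L^k}$, and by the corollary giving \eqref{eq:tildegamma} so is $\sum_{i\ge2}i\,\tilde\gamma_k^{(d_1,d_2)}(x^i)$ (the extra factor $i$ is harmless since $|x^i|\le\rho^2$ for $i\ge2$). Using also $\gamma_k^{(d_j)}(x)=C_k^{(d_j)}(x)y(x)^{k+d_j}$, the recurrence becomes
\[
\tilde\gamma_{k+1}^{(d_1,d_2)}(x)=y(x)\,\tilde\gamma_k^{(d_1,d_2)}(x)+C_k^{(d_1)}(x)C_{k+1}^{(d_2)}(x)\,y(x)^{2k+1+d_1+d_2}+E_k(x),
\]
where $E_k(x)$ gathers the $i\ge2$ contributions. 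Solving this linear recurrence with $\tilde\gamma_0^{(d_1,d_2)}=0$ gives
\[
\tilde\gamma_k^{(d_1,d_2)}(x)=\sum_{\ell=0}^{k-1}C_\ell^{(d_1)}(x)C_{\ell+1}^{(d_2)}(x)\,y(x)^{k+\ell+d_1+d_2}+\sum_{\ell=0}^{k-1}y(x)^{k-1-\ell}E_\ell(x).
\]
Inserting $C_\ell^{(d_j)}(x)=C^{(d_j)}(x)+\Ord{L^\ell}$ from Lemma~\ref{l:g} into the first sum, and using $|y(x)|\le1$ on $\Delta(\eta,\theta)$ (so that $\Ord{L^\ell}y(x)^\ell=\Ord{L^\ell}$), reproduces the claimed main term $C^{(d_1)}(x)C^{(d_2)}(x)\,y(x)^{k+d_1+d_2}\sum_{\ell=0}^{k-1}\bigl(y(x)^\ell+\Ord{L^\ell}\bigr)$.

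The main obstacle is the last point: checking that the propagated error $\sum_{\ell}y(x)^{k-1-\ell}E_\ell(x)$ is itself $\Ord{|y(x)|^{k+d_1+d_2}}$, hence absorbable into the stated form, \emph{uniformly} for $x\in\Delta(\eta,\theta)$. The trick is not to estimate the $i\ge2$ tails crudely but to keep the factors $y(x^i)^{\ell}$ with $i\ge2$ using $\gamma_\ell^{(d)}(x^i)=C_\ell^{(d)}(x^i)y(x^i)^{\ell+d}$: from the functional equation \eqref{treefun} one has $|y(x)|\asymp|x|$ on $\Delta(\eta,\theta)$, so $|y(x^2)|\lesssim|y(x)|^2$, which makes every $i\ge2$ term exponentially small \emph{relative} to $y(x)^{k+d_1+d_2}$; combined with $\tilde\gamma_0^{(d_1,d_2)}=0$ this keeps the remainder under control. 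Once $\eta$ (equivalently $\epsilon$) and $\theta$ are taken small enough for all these bounds to hold, collecting the pieces gives the assertion. This bookkeeping of powers of $y$ is where the work lies; everything else reduces to Lemma~\ref{l:g} and the a priori estimates of Section~\ref{sec:3}.
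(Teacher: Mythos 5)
Your proposal is correct and follows essentially the same route as the paper: differentiate the recursion in $u$ and then $v$, set $u=v=1$ to get the first-order linear recurrence $\tilde\gamma_{k+1}^{(d_1,d_2)}=y(x)\tilde\gamma_k^{(d_1,d_2)}+\gamma_{k+1}^{(d_2)}\sum_i\gamma_k^{(d_1)}(x^i)+\dots$ with $\tilde\gamma_0^{(d_1,d_2)}=0$, solve it by telescoping, and substitute $\gamma_\ell^{(d_j)}(x)=C_\ell^{(d_j)}(x)y(x)^{\ell+d_j}$ with $C_\ell^{(d_j)}=C^{(d_j)}+\Ord{L^\ell}$, controlling the $i\ge2$ tails via Corollary~\ref{cor:sumgamma} and \eqref{eq:tildegamma}. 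Your extra paragraph on propagating the error terms uniformly is a reasonable (and slightly more careful) elaboration of what the paper leaves implicit.
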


\begin{proof}
We use the recursive representation \eqref{eq:recgamma} for $\gamma^{(d_1)}(x,u,v)$ with the additional variable $v$. This gives
\[\gamma^{(d_1)}_{k+1}(x,u,v)=y_{k+1}^{(d)}(x,u,v)\sum\limits_{i\geq 1} \gamma_k^{(d_1)}(x^i,u^i,v^i)u^{i-1}.\]
Derivating with respect to $v$ gives
\begin{align*}
 \tilde{\gamma}_{k+1}^{(d_1,d_2)}(x,u,v)&=\gamma^{(d_2)}(x,u,v)\sum\limits_{i\geq 1} \gamma_k^{(d_1)}(x^i,u^i,v^i)u^{i-1}\\
&\phantom{=}+y_{k+1}(x,u,v)\sum\limits_{i\geq 1}i \tilde{\gamma}_k^{(d_1,d_2)}(x^i,u^i,v^i)u^{i-1}v^{i-1}\\
&=y_{k+1}(x,u,v)\left(\sum\limits_{i\geq 1} \gamma_k^{(d_1)}(x^i,u^i,v^i)u^{i-1}\right)\left(\sum\limits_{i\geq 1} \gamma_k^{(d_2)}(x^i,u^i,v^i)v^{i-1}\right)\\
&\phantom{=}+y_{k+1}(x,u,v)\sum\limits_{i\geq 1}i \tilde{\gamma}_k^{(d_1,d_2)}(x^i,u^i,v^i)u^{i-1}v^{i-1},\\
\end{align*}
with $\tilde{\gamma}^{(d_1,d_2)}_0(x)=0$. Setting $u=v=1$ we obtain
\begin{align*}
 \tilde{\gamma_{k+1}}^{(d_1,d_2)}(x)=y(x)\left[\left(\sum\limits_{i\geq 1}
\gamma_k^{(d_1)}(x^i)\right)\left(\sum\limits_{i\geq 1}
\gamma_k^{(d_2)}(x^i)\right)+\sum\limits_{i\geq 1}i \tilde{\gamma}_k^{(d_1,d_2)}(x^i)\right]\\
=y(x)\left((\gamma_k^{(d_1)}(x)+\Gamma_k^{(d_1)}(x))(\gamma_k^{(d_2)}(x)+\Gamma_k^{(d_2)}(x))+\tilde{\Gamma}^{(d_1,d_2)}_k(x)+\tilde{\gamma_{k}}^{(d_1,d_2)}(x)\right),
\end{align*}
where we use the notations $\Gamma_k^{(d_1)}(x)=\sum\limits_{i\geq 2} \gamma_k^{(d_1)}(x^i)$ and $\Gamma_k^{(d_2)}(x)=\sum\limits_{i\geq 2} \gamma_k^{(d_2)}(x^i)$ as in the proof of Lemma~\ref{l:g}, and $\tilde{\Gamma}_k^{(d_1,d_2)}(x)=\sum\limits_{i\geq 2} i\tilde{\gamma}_k^{(d_1,d_2)}(x^i)$. Solving the recurrence, we get
\begin{equation}\label{eq:tigammaexpl}
 \tilde{\gamma}^{(d_1,d_2)}_k(x)=\sum_{\ell=1}^{k-1}y(x)^{k-\ell}\left((\gamma_\ell^{(d_1)}(x)+\Gamma_\ell^{(d_1)}(x))(\gamma_\ell^{(d_2)}(x)+\Gamma_\ell^{(d_2)}(x))+\tilde{\Gamma}^{(d_1,d_2)}_\ell(x)\right)
\end{equation}
From Corollary~\ref{cor:sumgamma} we know that $\Gamma_\ell^{(d_1)}(x)=\mc{O}(L^\ell)$ and
$\Gamma_\ell^{(d_2)}(x)=\mc{O}(L^\ell)$ in $\Theta$ for $u=v=1$. Together with
Equation~\eqref{eq:tildegamma} we have
\begin{align*}
 \tilde{\gamma}^{(d_1d_2)}_k(x)=\sum_{\ell=1}^{k-1}y(x)^{k-\ell}\left((C^{(d_1)}(x)y(x)^{\ell+d_1}+\mc{O}(L^\ell))(C^{(d_2)}y(x)^{\ell+d_2}+\mc{O}(L^\ell))+\mc{O}(L^\ell)\right),
\end{align*}and the result follows.
\end{proof}

To extract coefficients we will use Cauchy's formula.
\[[x^n]\tilde{\gamma}^{(d_1,d_2)}(x)=\frac{1}{2\pi
i}\int_{\delta}\tilde{\gamma}^{(d_1,d_2)}(x)\frac{1}{z^{n+1}}\mathrm{d}x,\]
where $\delta$ is the truncated contour $\delta=\delta_1\cup\delta_2\cup\delta_3\cup\delta_4$ given by
\begin{align}\label{eq:delta}
\begin{split}
\delta_1&=\left\{x=a+\frac{\rho i}{n}\bigg|\rho\leq a \leq \rho+\eta\log^2n/n\right\},\\
\delta_2&=\left\{x=\rho\left(1-\frac{e^{i\varphi}}{n}\right)\bigg|-\frac{\pi}2\leq\varphi\leq\frac{\pi}2\right\},\\
\delta_3&=\left\{x=a-\frac{\rho i}{n}\bigg|\rho\leq a \leq \rho+\eta\log^2n/n\right\}
\end{split}
\end{align}
and $\delta_4$ being a circular arc closing the contour, cf. Figure~\ref{fig:delta}.
\begin{figure}[ht]
\centering
\includegraphics[width=0.4\textwidth]{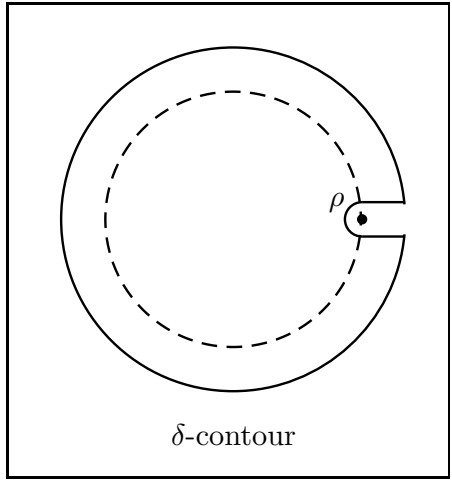}
\caption{The integration contour $\delta$}\label{fig:delta}
\end{figure}

It can be shown that the contribution of the circular arc $\delta_4$ is exponentially small and
thus asymptotically negligible. Near $\rho$, more
precicely for $z=\rho(1+\frac{s}{n})\in \delta_1\cup\delta_2\cup\delta_3$ and $k=\kappa\sqrt{n}$, we have
\begin{align*}
 y(x)^{d_1+d_2}&=1+\mc{O}\left(\sqrt{\left|\frac{s}{n}\right|}\right), \\
1-y(x)&\sim
b\sqrt{\rho}\sqrt{-\frac{s}{n}}\left(1+\mc{O}\left(\sqrt{\left|\frac{s}{n}\right|}\right)\right), \\
y(x)^k&\sim\exp(-\kappa b\sqrt{-\rho
s})\left(1+\mc{O}\left(\left|\frac{s}{\sqrt{n}}\right|\right)\right), \\
C^{(d_1)}(x)&\sim C_{d_1}\rho^{d_1}+\mc{O}\left(\left|\frac{s}{n}\right|\right),\qquad C^{(d_2)}(x)\sim
C_{d_2}\rho^{d_2}+\mc{O}\left(\left|\frac{s}{n}\right|\right).
\end{align*} 
Hence, the expected value $[x^n]\tilde{\gamma}^{(d_1,d_2)}(x)$ is given by 
\[[x^n]\tilde{\gamma}^{(d_1,d_2)}(x)\sim C_{d_1}C_{d_2}\rho^{d_1+d_2}\frac{1}{2\pi
i}\int_{\delta}\frac{\sqrt{n}}{b\sqrt{\rho}\sqrt{-s}}e^{-\kappa b\sqrt{-\rho s}}(1-e^{-\kappa
b\sqrt{-\rho s}})e^{-s}\frac{1}{n}\rho^{-n}\textrm{d}s,\]
as $\sum_{\ell=0}^{k-1}y(x)^\ell=\frac{1-y(x)^k}{1-y(x)}$. 
Integrals of the shape $\int_\delta (-s)^\mu e^{-\alpha\sqrt{-s}-s}\mathrm{d}s$ can be easily
transformed into Hankel's representation of the Gamma-function and together with 
$y_n\sim\frac{b\sqrt{\rho}}{2\sqrt{\pi}}\rho^{-n}n^{-\nicefrac{3}{2}}$ (cf. Equation
\eqref{eq:yn}) and $\alpha=\kappa b\sqrt{\rho}$ we obtain 
\begin{align*}
 \mathbb{E}\left(X_n^{(d_1)}(k)\cdot
X_n^{(d_2)}(k))\right)&=C_{d_1}C_{d_2}\rho^{d_1+d_2}\frac{2}{b^2\rho}n\left(e^{-\frac{\kappa^2b^2\rho}{4}}+e^{-\kappa^2b^2\rho}\right)+O(\sqrt
n).
\end{align*}
From this the representation of the covariance given in Proposition~\ref{prop:covariance} easily 
follows.

\begin{figure}[ht]
\centering
\includegraphics[width=0.8\textwidth]{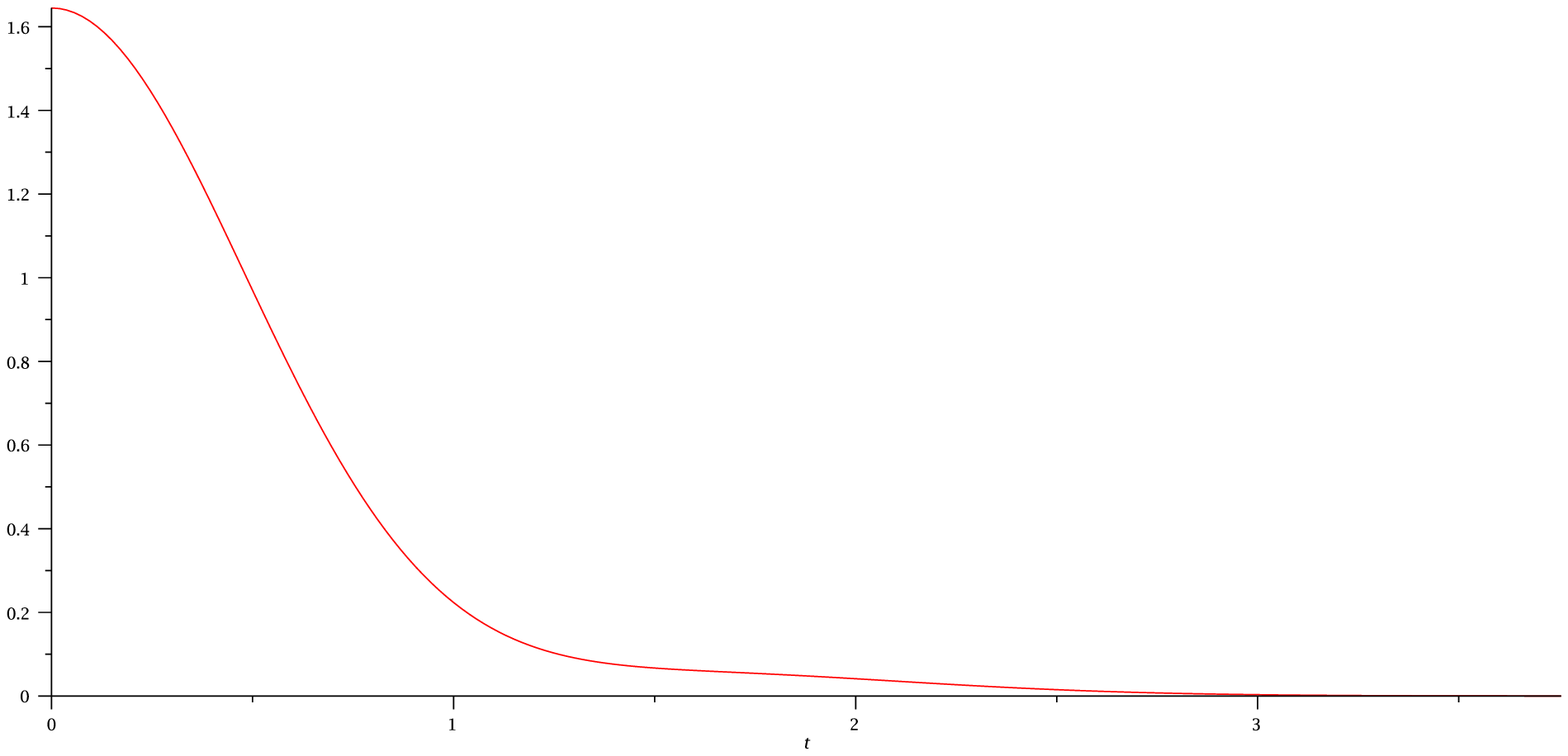}
\caption{The covariance for $\kappa\in[0,\frac{1}{\sqrt{n}}\left(\mathbb{E}(H_n)+3\sqrt{\mathbb{V}\mathrm{ar}(H_n)}\right)]$}\label{fig:covariance}
\end{figure}

\subsection{The correlation coefficient}
To obtain more information on the correlation of two degrees $d_1$ and $d_2$ on the same level $k=\kappa\sqrt{n}$, we compute the correlation coefficient, given by
\[\mathrm{Cor}\left(X_n^{(d_1)}(k), X_n^{(d_2)}(k))\right)=\frac{\mathbb{C}\mathrm{ov}(X_n^{(d_1)}(k),X_n^{(d_2)}(k))}{\sqrt{\mathbb{V}\mathrm{ar}(X_n^{(d_1)}(k))}\sqrt{\mathbb{V}\mathrm{ar}(X_n^{(d_2)}(k))}}.\]

For the computation, it remains to compute the variance $\mathbb{V}\mathrm{ar}(X_n^{(d_1)}(k))$, given by \[\mathbb{V}\mathrm{ar}(X_n^{(d_1)}(k))=\mathbb{E}\left((X_n^{(d_1)}(k))^2\right)-\left(\mathbb{E}(X_n^{(d_1)}(k))\right)^2.\] 
We need to determine $\mathbb{E}\left((X_n^{(d_1)}(k))^2\right)$, which can be done very similarly to the previous part.
\[\mathbb{E}\left((X_n^{(d_1)}(k))^2\right)=\frac{1}{y_n}[x^n]\left[\frac{\partial}{\partial u}\left(u\diff{u}y_k(x,u,1)\right)\right]_{u=1},\]

\begin{prop}\label{prop:variance}
 The Variance $\mathbb{V}\mathrm{ar}(X_n^{(d_1)}(k))$ of the random variable $X_n^{(d_1)}(k)$
counting vertices of degree $d_1$, with $d_1$ fixed, at level $k=\kappa\sqrt{n}$ in a random P\'olya
tree of size $n$ is asymptotically given by
\begin{equation}\label{eq:variance}
 \mathbb{V}\mathrm{ar}(X_n^{(d_1)}(k))=C_{d_1}C_{d_2}\rho^{2d_1}n\left(\frac{2}{b^2\rho}\left(e^{-\frac{\kappa^2b^2\rho}{4}}+e^{-\kappa^2b^2\rho}\right)-\kappa^2e^{-\frac{\kappa^2b^2\rho}{2}}\right)+O(\sqrt
n\,),
\end{equation}
as $n$ tends to infinity. 
\end{prop}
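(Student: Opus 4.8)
The plan is to reduce the variance to objects already analysed in this section by passing to the second factorial moment. Writing $\mathbb{E}\big((X_n^{(d_1)}(k))^2\big)=\mathbb{E}\big(X_n^{(d_1)}(k)(X_n^{(d_1)}(k)-1)\big)+\mathbb{E}\big(X_n^{(d_1)}(k)\big)$ and noting that the second factorial moment is extracted by the generating function $\gamma_k^{(d_1)[2]}(x):=\gamma_k^{(d_1)[2]}(x,1)=\big[\tfrac{\partial^2}{\partial u^2}y_k^{(d_1)}(x,u)\big]_{u=1}$ from the Notations, one gets
\[
\mathbb{V}\mathrm{ar}\big(X_n^{(d_1)}(k)\big)=\frac1{y_n}[x^n]\gamma_k^{(d_1)[2]}(x)+\frac1{y_n}[x^n]\gamma_k^{(d_1)}(x)-\Big(\frac1{y_n}[x^n]\gamma_k^{(d_1)}(x)\Big)^2 .
\]
So it suffices to determine the coefficient asymptotics of $\gamma_k^{(d_1)[2]}(x)$ and of $\gamma_k^{(d_1)}(x)$ at $k=\kappa\sqrt n$.

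First I would establish the exact analogue of Lemma~\ref{lem:covrep}, namely that for $x\in\Delta(\eta,\theta)$
\[
\gamma_k^{(d_1)[2]}(x)=\big(C^{(d_1)}(x)\big)^2 y(x)^{k+2d_1}\sum_{\ell=0}^{k-1}\big(y(x)^\ell+\mathcal{O}(L^\ell)\big).
\]
This follows from the recursion for $\gamma_{k+1}^{(d_1)[2]}(x,u)$ written down in the proof of Lemma~\ref{l:g2}: specialising to $u=1$ and discarding the three contributions $\Gamma_k^{(d_1)}(x)=\sum_{i\ge2}\gamma_k^{(d_1)}(x^i)$, $\sum_{i\ge2}i\,\gamma_k^{(d_1)[2]}(x^i)$ and $\sum_{i\ge2}(i-1)\gamma_k^{(d_1)}(x^i)$ — each $\mathcal{O}(L^k)$ because for $i\ge2$ one has $|x^i|\le\rho^2+\varepsilon<1$, so that Lemma~\ref{l:g}, the first part of Lemma~\ref{l:g2} and the monotonicity argument of Corollary~\ref{cor:sumgamma} apply — one is left with
\[
\gamma_{k+1}^{(d_1)[2]}(x)=y(x)\Big(\gamma_k^{(d_1)}(x)^2+\gamma_k^{(d_1)[2]}(x)+\mathcal{O}(L^k)\Big),\qquad \gamma_0^{(d_1)[2]}(x)=0 ,
\]
which is structurally identical to the recursion for $\tilde\gamma_k^{(d_1,d_2)}$ in the proof of Lemma~\ref{lem:covrep} with $d_2$ replaced by $d_1$ (the product $\gamma_k^{(d_1)}\gamma_k^{(d_2)}$ becoming $\gamma_k^{(d_1)2}$) and the same initial value; solving the recurrence and inserting $\gamma_\ell^{(d_1)}(x)=C^{(d_1)}(x)y(x)^{\ell+d_1}(1+\mathcal{O}(L^\ell))$ from Lemma~\ref{l:g} yields the claimed representation.

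Since this representation is precisely the $d_2=d_1$ specialisation of Lemma~\ref{lem:covrep}, the singularity analysis carried out after that lemma (Cauchy integration along $\delta$ from \eqref{eq:delta}, the local expansions $y(x)^k\sim e^{-\kappa b\sqrt{-\rho s}}$, $1-y(x)\sim b\sqrt\rho\sqrt{-s/n}$, $C^{(d_1)}(x)\sim C_{d_1}\rho^{d_1}$, and a Hankel-type evaluation of $\int_\delta(-s)^\mu e^{-\alpha\sqrt{-s}-s}\,ds$ together with $y_n\sim\frac{b\sqrt\rho}{2\sqrt\pi}\rho^{-n}n^{-3/2}$) applies verbatim and gives $\frac1{y_n}[x^n]\gamma_k^{(d_1)[2]}(x)=C_{d_1}^2\rho^{2d_1}\frac2{b^2\rho}\,n\big(e^{-\kappa^2b^2\rho/4}+e^{-\kappa^2b^2\rho}\big)+\mathcal{O}(\sqrt n)$. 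For the first moment I would integrate $\gamma_k^{(d_1)}(x)=C_k^{(d_1)}(x)y(x)^{k+d_1}$ along the same contour; now no $\ell$-sum occurs, so there is no factor $1/(1-y(x))$, and with $\gamma_k^{(d_1)}(x)\sim C_{d_1}\rho^{d_1}e^{-\kappa b\sqrt{-\rho s}}$ one transforms $\frac1{2\pi i}\int_\delta e^{-\kappa b\sqrt{-\rho s}-s}\,ds=\frac{\kappa b\sqrt\rho}{2\sqrt\pi}e^{-\kappa^2b^2\rho/4}$ (Hankel's representation of $\Gamma$), obtaining $\mathbb{E}(X_n^{(d_1)}(k))=\frac1{y_n}[x^n]\gamma_k^{(d_1)}(x)\sim C_{d_1}\rho^{d_1}\kappa\,e^{-\kappa^2b^2\rho/4}\sqrt n$; in particular this term is $\mathcal{O}(\sqrt n)$ and hence absorbed, while $\big(\mathbb{E}(X_n^{(d_1)}(k))\big)^2=C_{d_1}^2\rho^{2d_1}\kappa^2 e^{-\kappa^2b^2\rho/2}\,n+\mathcal{O}(\sqrt n)$, which is exactly $\mathbb{E}(X_n^{(d_1)}(k))\mathbb{E}(X_n^{(d_2)}(k))$ with $d_2=d_1$, as forced by Proposition~\ref{prop:covariance}. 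Subtracting the three pieces yields \eqref{eq:variance}. The main obstacle is the bookkeeping in the second step: one has to verify carefully that the \emph{pure} second derivative $\gamma_k^{(d_1)[2]}$ obeys the same recurrence as the \emph{mixed} derivative $\tilde\gamma_k^{(d_1,d_2)}$ once the exponentially small terms are absorbed — in particular the extra sum $\sum_{i\ge2}(i-1)\gamma_k^{(d_1)}(x^i)$, present only in the $\partial_u^2$ recursion, must be shown to be $\mathcal{O}(L^k)$ — and to check that every discarded term contributes at most $\mathcal{O}(\sqrt n)$ to the variance; the Hankel evaluations and the remaining estimates are then routine given the preceding subsection.
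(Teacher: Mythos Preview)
Your proposal is correct and follows essentially the same route as the paper. The only cosmetic difference is that the paper works with the second-moment operator $\big[\partial_u(u\,\partial_u\,y_k^{(d_1)})\big]_{u=1}$ directly, which produces the extra summand $C^{(d_1)}(x)y(x)^{k+d_1}$ in Lemma~\ref{lem:varrep}, whereas you split off this first-moment term in advance via the factorial-moment identity $\mathbb{E}X^2=\mathbb{E}X(X-1)+\mathbb{E}X$ and then analyse the pure second derivative $\gamma_k^{(d_1)[2]}(x)$; in both cases the extra term is $\gamma_k^{(d_1)}(x)$, contributes $\mathcal{O}(\sqrt n)$ after coefficient extraction, and the remaining singularity analysis is identical to the covariance computation with $d_2=d_1$.
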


We proceed analogously to the computation of the variance, and obtain the following auxiliary result. 
\begin{lem}\label{lem:varrep}
 There exist constants $\epsilon$ and $\theta$ such that for $z\in\Delta(\eta,\theta)$
\[\tilde{\gamma}_k^{(d_1[2])}(x)=(C^{(d_1)}(x))^2y(x)^{k+2d_1}\frac{1-y(x)^k}{1-y(x)}+C^{(d_1)}(x)y(x)^{k+d_1},\]
where $C^{(d_1)}(x)$ is given in Lemma~\ref{l:g}.
\end{lem}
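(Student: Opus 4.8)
The plan is to mirror the proof of Lemma~\ref{lem:covrep} almost line for line, the only structural change being that a product $\gamma_\ell^{(d_1)}(x)\gamma_\ell^{(d_2)}(x)$ gets replaced by a square. First I would rewrite the quantity to be estimated as
\[
\tilde\gamma_k^{(d_1[2])}(x)=\U{\diff u\!\(u\diff u y_k^{(d_1)}(x,u)\)}=\gamma_k^{(d_1)}(x)+\gamma_k^{(d_1)[2]}(x),
\]
where $\gamma_k^{(d_1)[2]}(x)=\U{\frac{\partial^2}{\partial u^2} y_k^{(d_1)}(x,u)}$. Since Lemma~\ref{l:g} already supplies $\gamma_k^{(d_1)}(x)=C_k^{(d_1)}(x)y(x)^{k+d_1}=C^{(d_1)}(x)y(x)^{k+d_1}+\Ord{L^k}$ --- this is precisely the ``$+\,C^{(d_1)}(x)y(x)^{k+d_1}$'' term of the assertion, carrying the \emph{limit} constant --- it remains only to analyze $\gamma_k^{(d_1)[2]}(x)$. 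This is also the reason I would not run a recursion for $\tilde\gamma_k^{(d_1[2])}$ directly: its initial value is $\gamma_0^{(d_1)}(x)=C_0^{(d_1)}(x)y(x)^{d_1}$, which carries the ``wrong'' constant $C_0^{(d_1)}$ rather than $C^{(d_1)}$.

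Setting $u=1$ in the recursion for $\gamma_{k+1}^{(d_1)[2]}(x,u)$ obtained in the proof of Lemma~\ref{l:g2} by differentiating \eqref{eq:recgamma} gives
\[
\gamma_{k+1}^{(d_1)[2]}(x)=y(x)\!\left[\Big(\sum_{i\ge1}\gamma_k^{(d_1)}(x^i)\Big)^{2}+\sum_{i\ge1}i\,\gamma_k^{(d_1)[2]}(x^i)+\sum_{i\ge2}(i-1)\gamma_k^{(d_1)}(x^i)\right],\qquad \gamma_0^{(d_1)[2]}(x)=0.
\]
Next I would split off the $i=1$ term of the middle sum; everything left over is indexed by $i\ge2$, and for such $i$ one has $|x^i|\le|x|^2<\rho-\eta$ on $\Delta$ once $\eta$ is small, so by the ``$|x|\le\rho-\eta$'' branches of Lemmas~\ref{l:g} and~\ref{l:g2} and by Corollary~\ref{cor:sumgamma} every one of these terms is analytic and $\Ord{L^k}$, uniformly on $\Delta$. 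Using $\sum_{i\ge1}\gamma_k^{(d_1)}(x^i)=C^{(d_1)}(x)y(x)^{k+d_1}+\Ord{L^k}$ (Lemma~\ref{l:g} plus Corollary~\ref{cor:sumgamma}) together with the boundedness of $\gamma_k^{(d_1)}(x)$ on $\Delta$ (Lemma~\ref{l:g}, or \eqref{g:1}), this collapses to the scalar first-order recursion
\[
\gamma_{k+1}^{(d_1)[2]}(x)=y(x)\gamma_k^{(d_1)[2]}(x)+\big(C^{(d_1)}(x)\big)^{2}y(x)^{2k+2d_1+1}+\Ord{L^k}.
\]

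Solving this with $\gamma_0^{(d_1)[2]}(x)=0$ gives $\gamma_k^{(d_1)[2]}(x)=\sum_{\ell=0}^{k-1}y(x)^{k-1-\ell}\big((C^{(d_1)}(x))^{2}y(x)^{2\ell+2d_1+1}+\Ord{L^\ell}\big)$, whose main part telescopes to $(C^{(d_1)}(x))^{2}y(x)^{k+2d_1}\sum_{\ell=0}^{k-1}y(x)^{\ell}=(C^{(d_1)}(x))^{2}y(x)^{k+2d_1}\frac{1-y(x)^k}{1-y(x)}$; adding $\gamma_k^{(d_1)}(x)$ yields the asserted expression, the accumulated $\Ord{L^\ell}$-contributions being absorbed exactly as the $\Ord{L^\ell}$ inside the sum $\sum_{\ell=0}^{k-1}(y(x)^\ell+\Ord{L^\ell})$ of Lemma~\ref{lem:covrep}, and harmless against the main term for the $x\approx\rho$, $k\asymp\sqrt n$ relevant to the coefficient extraction that follows. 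The only genuine point of care is the uniformity claim in the middle step --- that over all of $\Delta$, and not merely over $|x|\le\rho$, each $i\ge2$ contribution is truly exponentially small; this rests on the elementary observation $|x|^2<\rho$ on $\Delta$ for small $\eta$, after which the whole argument is a routine replay of the covariance computation with a square in place of a product.
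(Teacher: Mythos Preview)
Your proof is correct and follows essentially the same route as the paper: the paper's own proof is a two-sentence sketch saying ``analogous to Lemma~\ref{lem:covrep}, differentiating recurrence \eqref{eq:recgamma} a second time,'' and that the extra summand $C^{(d_1)}(x)y(x)^{k+d_1}$ arises from differentiating twice in the same variable --- which is exactly your decomposition $\tilde\gamma_k^{(d_1[2])}=\gamma_k^{(d_1)}+\gamma_k^{(d_1)[2]}$ followed by the recursion for $\gamma_k^{(d_1)[2]}$. You have simply filled in the details the paper omits.
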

\begin{proof}
 The proof of this lemma is analogous to the proof of Lemma~\ref{lem:covrep}, derivating recurrence \eqref{eq:recgamma} a second time. The additional summand $C^{(d_1)}(x)y(x)^{k+d_1}$ origins in derivating twice with respect to the same variable $u$.
\end{proof}

Note that the additional summand $C^{(d_1)}(x)y(x)^{k+d_1}$ in Lemma~\ref{lem:varrep}, where $\tilde{\gamma}_k^{(d_1[2])}(x)$ and $\tilde{\gamma}_k^{(d_1,d_2)}(x)$ differ from each other, is equal to the expexted value $\mathbb{E}\left(X_n^{(d_1)}(k)\right)$ when extracting coefficients $\frac{1}{y_n}[x^n]C^{(d_1)}(x)y(x)^{k+d_1}$. As this is of order $\sqrt{n}$, while the coefficient of the other terms will be of order $n$, this term is negligible, and we obtain
\begin{equation}
 \mathbb{E}\left((X_n^{(d_1)}(k))^2\right)=C_{d_1}C_{d_2}\rho^{2d_1}\frac{2}{b^2\rho}n\left(e^{-\frac{\kappa^2b^2\rho}{4}}+e^{-\kappa^2b^2\rho}\right)
\end{equation}
by using Cauchy's formula and the integration contour $\delta$ given in \eqref{eq:delta}. 
Applying the known estimate for $\mathbb{E}(X_n^{(d_1)}(k))$ we obtain the representation given in Proposition~\ref{prop:variance}, and with Proposition~\ref{prop:covariance} the result given in Theorem~\ref{thm:cor} follows immediately.


\begin{thebibliography}{10}

\bibitem{Al91}
D.~Aldous.
\newblock The continuum random tree. {I}.
\newblock {\em Ann. Probab.}, 19(1):1--28, 1991.

\bibitem{Al91b}
D.~Aldous.
\newblock The continuum random tree. {II}. {A}n overview.
\newblock In {\em Stochastic analysis (Durham, 1990)}, volume 167 of {\em
  London Math. Soc. Lecture Note Ser.}, pages 23--70. Cambridge Univ. Press,
  Cambridge, 1991.

\bibitem{Al93}
D.~Aldous.
\newblock The continuum random tree. {III}.
\newblock {\em Ann. Probab.}, 21(1):248--289, 1993.

\bibitem{Bi}
P.~Billingsley.
\newblock {\em Convergence of probability measures}.
\newblock John Wiley \& Sons Inc., New York, 1968.

\bibitem{BrFl08}
N.~Broutin and P.~Flajolet.
\newblock The height of random binary unlabelled trees.
\newblock {\em Discrete Math. Theor. Comput. Sci. Proc.}, AI:121--134, 2008.

\bibitem{BrFl11}
N.~Broutin and P.~Flajolet.
\newblock The distribution of height and diameter in random non-plane binary
  trees.
\newblock Submitted, 2011.

\bibitem{CDJ01}
B.~Chauvin, M.~Drmota, and J.~Jabbour-Hattab.
\newblock The profile of binary search trees.
\newblock {\em Ann. Appl. Probab.}, 11(4):1042--1062, 2001.

\bibitem{CKMR05}
B.~Chauvin, T.~Klein, J.-F. Marckert, and A.~Rouault.
\newblock Martingales and profile of binary search trees.
\newblock {\em Electron. J. Probab.}, 10:no. 12, 420--435 (electronic), 2005.

\bibitem{CDKK08}
F.~Chyzak, M.~Drmota, T.~Klausner, and G.~Kok.
\newblock The distribution of patterns in random trees.
\newblock {\em Combin. Probab. Comput.}, 17(1):21--59, 2008.

\bibitem{CoHo81}
J.~W. Cohen and G.~Hooghiemstra.
\newblock Brownian excursion, the {$M/M/1$} queue and their occupation times.
\newblock {\em Math. Oper. Res.}, 6(4):608--629, 1981.

\bibitem{De02}
L.~Devroye.
\newblock Laws of large numbers and tail inequalities for random tries and
  {PATRICIA} trees.
\newblock {\em J. Comput. Appl. Math.}, 142(1):27--37, 2002.
\newblock Probabilistic methods in combinatorics and combinatorial
  optimization.

\bibitem{DeHw06}
L.~Devroye and H.-K. Hwang.
\newblock Width and mode of the profile for some random trees of logarithmic
  height.
\newblock {\em Ann. Appl. Probab.}, 16(2):886--918, 2006.

\bibitem{Dr97}
M.~Drmota.
\newblock On nodes of given degree in random trees.
\newblock In {\em Probabilistic methods in discrete mathematics
  ({P}etrozavodsk, 1996)}, pages 31--44. VSP, Utrecht, 1997.

\bibitem{MD09}
M.~Drmota.
\newblock {\em Random trees}.
\newblock Springer, Wien, NewYork, 2009.
\newblock An interplay between combinatorics and probability.

\bibitem{DrGi97}
M.~Drmota and B.~Gittenberger.
\newblock On the profile of random trees.
\newblock {\em Random Structures Algorithms}, 10(4):421--451, 1997.

\bibitem{DrGi99}
M.~Drmota and B.~Gittenberger.
\newblock The distribution of nodes of given degree in random trees.
\newblock {\em J. Graph Theory}, 31(3):227--253, 1999.

\bibitem{DrGi}
M.~Drmota and B.~Gittenberger.
\newblock The shape of unlabeled rooted random trees.
\newblock {\em European J. Combinat.}, 31:2028--2063, 2010.

\bibitem{DrHw05}
M.~Drmota and H.-K. Hwang.
\newblock Profiles of random trees: correlation and width of random recursive
  trees and binary search trees.
\newblock {\em Adv. in Appl. Probab.}, 37(2):321--341, 2005.

\bibitem{DJN08}
M.~Drmota, S.~Janson, and R.~Neininger.
\newblock A functional limit theorem for the profile of search trees.
\newblock {\em Ann. Appl. Probab.}, 18(1):288--333, 2008.

\bibitem{FO90}
P.~Flajolet and A.~M. Odlyzko.
\newblock Singularity analysis of generating functions.
\newblock {\em SIAM Journal on Discrete Mathematics}, 3:216--240, 1990.

\bibitem{FlSe}
P.~Flajolet and R.~Sedgewick.
\newblock {\em Analytic combinatorics}.
\newblock Cambridge University Press, Cambridge, 2009.

\bibitem{FHN06}
M.~Fuchs, H.-K. Hwang, and R.~Neininger.
\newblock Profiles of random trees: limit theorems for random recursive trees
  and binary search trees.
\newblock {\em Algorithmica}, 46(3-4):367--407, 2006.

\bibitem{GS79}
R.~K. Getoor and M.~J. Sharpe.
\newblock Excursions of {B}rownian motion and {B}essel processes.
\newblock {\em Z. Wahrsch. Verw. Gebiete}, 47(1):83--106, 1979.

\bibitem{G98}
B.~Gittenberger.
\newblock Convergence of branching processes to the local time of a {B}essel
  process.
\newblock In {\em Proceedings of the {E}ighth {I}nternational {C}onference
  ``{R}andom {S}tructures and {A}lgorithms'' ({P}oznan, 1997)}, volume~13,
  pages 423--438, 1998.

\bibitem{G02}
B.~Gittenberger.
\newblock On the profile of random forests.
\newblock In {\em Mathematics and computer science, II (Versailles, 2002)},
  Trends Math., pages 279--293. Birkh\"auser, Basel, 2002.

\bibitem{Gi06}
B.~Gittenberger.
\newblock Nodes of large degree in random trees and forests.
\newblock {\em Random Structures Algorithms}, 28(3):374--385, 2006.

\bibitem{GL99}
B.~Gittenberger and G.~Louchard.
\newblock The {B}rownian excursion multi-dimensional local time density.
\newblock {\em J. Appl. Probab.}, 36(2):350--373, 1999.

\bibitem{GL00}
B.~Gittenberger and G.~Louchard.
\newblock On the local time density of the reflecting {B}rownian bridge.
\newblock {\em J. Appl. Math. Stochastic Anal.}, 13(2):125--136, 2000.

\bibitem{HaMi10}
B.~Haas and G.~Miermont.
\newblock Scaling limits of {M}arkov branching trees with applications to
  {G}alton-{W}atson and random unordered binary trees.
\newblock arXiv:1003.3632, 2010.

\bibitem{Ho82}
G.~Hooghiemstra.
\newblock On the explicit form of the density of {B}rownian excursion local
  time.
\newblock {\em Proc. Amer. Math. Soc.}, 84(1):127--130, 1982.

\bibitem{Ho99}
G.~Hooghiemstra.
\newblock On the occupation time of {B}rownian excursion.
\newblock {\em Electron. Comm. Probab.}, 4:61--64 (electronic), 1999.

\bibitem{Hwa07}
H.-K. Hwang.
\newblock Profiles of random trees: plane-oriented recursive trees.
\newblock {\em Random Structures Algorithms}, 30(3):380--413, 2007.

\bibitem{Ja97}
K.~M. Jansons.
\newblock The distribution of time spent by a standard excursion above a given
  level, with applications to ring polymers near a discontinuity in potential.
\newblock {\em Electron. Comm. Probab.}, 2:53--58 (electronic), 1997.

\bibitem{KS}
I.~Karatzas and S.~E. Shreve.
\newblock {\em Brownian motion and stochastic calculus}, volume 113 of {\em
  Graduate Texts in Mathematics}.
\newblock Springer-Verlag, New York, 1988.

\bibitem{Kol77}
V.~F. Kolchin.
\newblock Branching processes, random trees and a generalized particle
  distribution scheme.
\newblock {\em Mat. Zametki}, 21(5):691--705, 1977.

\bibitem{Da}
V.~Kraus.
\newblock {\em Diverse families of rooted random trees - a compilation of
  characteristics}.
\newblock Diploma Thesis, 2008.

\bibitem{MaMi09}
J.-F. Marckert and G.~Miermont.
\newblock The {C}{R}{T} is the scaling limit of unordered binary trees.
\newblock {\em Random Structures and Algorithms}, 38(3):1--35, 2011.

\bibitem{MM91}
A.~Meir and J.~W. Moon.
\newblock On nodes of large out-degree in random trees.
\newblock In {\em Proceedings of the {T}wenty-second {S}outheastern
  {C}onference on {C}ombinatorics, {G}raph {T}heory, and {C}omputing ({B}aton
  {R}ouge, {LA}, 1991)}, volume~82, pages 3--13, 1991.

\bibitem{Ni05}
P.~Nicod{\`e}me.
\newblock Average profiles, from tries to suffix-trees.
\newblock In {\em 2005 {I}nternational {C}onference on {A}nalysis of
  {A}lgorithms}, Discrete Math. Theor. Comput. Sci. Proc., AD, pages 257--266
  (electronic). Assoc. Discrete Math. Theor. Comput. Sci., Nancy, 2005.

\bibitem{Ot}
R.~Otter.
\newblock The number of trees.
\newblock {\em Ann. Math.}, 49(2):583--599, 1948.

\bibitem{PHNS08}
G.~Park, H.-K. Hwang, P.~Nicod{\`e}me, and W.~Szpankowski.
\newblock Profiles of tries.
\newblock {\em SIAM J. Comput.}, 38(5):1821--1880, 2008/09.

\bibitem{Pi99}
J.~Pitman.
\newblock The {SDE} solved by local times of a {B}rownian excursion or bridge
  derived from the height profile of a random tree or forest.
\newblock {\em Ann. Probab.}, 27(1):261--283, 1999.

\bibitem{Pi06}
J.~Pitman.
\newblock {\em Combinatorial stochastic processes}, volume 1875 of {\em Lecture
  Notes in Mathematics}.
\newblock Springer-Verlag, Berlin, 2006.
\newblock Lectures from the 32nd Summer School on Probability Theory held in
  Saint-Flour, July 7--24, 2002, With a foreword by Jean Picard.

\bibitem{Pol37}
G.~P\'olya.
\newblock Kombinatorische {A}nzahlbestimmungen f\"ur {G}ruppen, {G}raphen und
  chemische {V}erbindungen.
\newblock {\em Acta Math.}, 68:145--254, 1937.

\bibitem{PoRe87}
G.~P{\'o}lya and R.~C. Read.
\newblock {\em Combinatorial enumeration of groups, graphs, and chemical
  compounds}.
\newblock Springer-Verlag, New York, 1987.
\newblock P{\'o}lya's contribution translated from the German by Dorothee
  Aeppli.

\bibitem{RY91}
D.~Revuz and M.~Yor.
\newblock {\em Continuous martingales and {B}rownian motion}, volume 293 of
  {\em Grundlehren der Mathematischen Wissenschaften [Fundamental Principles of
  Mathematical Sciences]}.
\newblock Springer-Verlag, Berlin, 1991.

\bibitem{RoSch}
R.~Robinson and A.~Schwenk.
\newblock The distribution of degrees in a large random tree.
\newblock {\em Discrete Mathematics}, 12:359--372, 1975.

\bibitem{Sch10}
E.-M. Schopp.
\newblock A functional limit theorem for the profile of {$b$}-ary trees.
\newblock {\em Ann. Appl. Probab.}, 20(3):907--950, 2010.

\bibitem{Sch77}
A.~J. Schwenk.
\newblock An asymptotic evaluation of the cycle index of a symmetric group.
\newblock {\em Discrete Math.}, 18(1):71--78, 1977.

\bibitem{Stepanov69}
V.~E. Stepanov.
\newblock The distribution of the number of vertices in the layers of a random
  tree.
\newblock {\em Teor. Verojatnost. i Primenen.}, 14:64--77, 1969.

\bibitem{Ta91}
L.~Tak{\'a}cs.
\newblock Conditional limit theorems for branching processes.
\newblock {\em J. Appl. Math. Stochastic Anal.}, 4(4):263--292, 1991.

\bibitem{Ta91b}
L.~Tak{\'a}cs.
\newblock On the distribution of the number of vertices in layers of random
  trees.
\newblock {\em J. Appl. Math. Stochastic Anal.}, 4(3):175--186, 1991.

\bibitem{Ta95b}
L.~Tak{\'a}cs.
\newblock Brownian local times.
\newblock {\em J. Appl. Math. Stochastic Anal.}, 8(3):209--232, 1995.

\bibitem{Ta95}
L.~Tak{\'a}cs.
\newblock On the local time of the {B}rownian motion.
\newblock {\em Ann. Appl. Probab.}, 5(3):741--756, 1995.

\bibitem{Ta99}
L.~Tak{\'a}cs.
\newblock On the local time of the {B}rownian bridge.
\newblock In {\em Applied probability and stochastic processes}, volume~19 of
  {\em Internat. Ser. Oper. Res. Management Sci.}, pages 45--62. Kluwer Acad.
  Publ., Boston, MA, 1999.

\bibitem{HHM02}
R.~van~der Hofstad, G.~Hooghiemstra, and P.~Van~Mieghem.
\newblock On the covariance of the level sizes in random recursive trees.
\newblock {\em Random Structures Algorithms}, 20(4):519--539, 2002.

\end{thebibliography}
\end{document}